\numberwithin{equation}{section}
\renewcommand{\d}{\mathrm{d}}
\def\k{\Bbbk}
\newcommand{\Aff}{\mathrm{Aff}}
\newcommand{\Inn}{\mathrm{Inn}}
\newcommand{\Hom}{\mathrm{Hom}}
\newcommand{\Fun}{\mathrm{Fun}}
\newcommand{\ab}{\mathrm{ab}}
\newcommand{\trid}{\triangleright}
\newcommand{\id}{\operatorname{id}}
\newcommand{\Ima}{\operatorname{Im}}
\newcommand{\coker}{\operatorname{coker}}
\newcommand{\ord}{\operatorname{ord}}
\newcommand{\Aut}{\operatorname{Aut}}
\newcommand{\End}{\operatorname{End}}
\newcommand{\SL}{\mathbf{SL}}
\newcommand{\Alt}{\mathbb{A}}
\newcommand{\Sym}{\mathbb{S}}
\newcommand{\N}{\mathbb{N}}
\newcommand{\Z}{\mathbb{Z}}
\newcommand{\C}{\mathbb{C}}
\newcommand{\F}{\mathbb{F}}
\newcommand{\eps}{\epsilon}
\renewcommand{\inf}{\operatorname{inf}}
\newcommand{\res}{\operatorname{res}}
\def\pf{\begin{proof}}
\def\epf{\end{proof}}
\def\lg{\langle}
\def\rg{\rangle}
\def\ot{\otimes}
\theoremstyle{plain}
\newtheorem{thm}{Theorem}[section]
\newtheorem{lem}[thm]{Lemma}
\newtheorem{pro}[thm]{Proposition}
\newtheorem{cor}[thm]{Corollary}
\theoremstyle{remark}
\newtheorem{exa}[thm]{Example}
\newtheorem{rem}[thm]{Remark}
\newtheorem*{acknowledgement*}{Acknowledgement}
\begin{document}

\title[The 2nd rack cohomology group]{An explicit description of the second cohomology group of a quandle}
\author[A. Garc\'ia Iglesias, L. Vendramin]{Agust\'in Garc\'ia Iglesias, Leandro Vendramin}

\begin{abstract}
	We use the inflation--restriction sequence and a result of Etingof and
	Gra\~na on the rack cohomology to give a explicit description of
	$2$-cocycles of finite indecomposable quandles with values in an abelian
	group.  Several applications are given.
\end{abstract}

\thanks{This work is partially supported by CONICET, FONCyT PICT-2013-1414 and
PICT-2014-1376, Secyt (UNC), ICTP and MATH-AmSud.}

\address{
Departamento de Matem\'atica -- FCEN,
Universidad de Buenos Aires, Pabell\'on I, Ciudad Universitaria (1428)
Buenos Aires, Rep\'ublica Argentina}
\email{lvendramin@dm.uba.ar}

\address{
FaMAF--CIEM (CONICET) -- Universidad Nacional de C\'ordoba, Medina Allende s/n, 
Ciudad Universitaria (5000) C\'ordoba, Rep\'ublica Argentina.}
\email{aigarcia@famaf.unc.edu.ar}

\maketitle
\setcounter{tocdepth}{1}

\section{Introduction and main results}

\subsection{}

Quandles are non-associative algebraic structures introduced independently by
Joyce~\cite{MR638121} and Matveev~\cite{MR672410} in connection with knot
theory. They produce powerful invariants similar to those obtained by 
coloring~\cite{MR3253967}, \cite{MR3054333}. Quandles turned out to be useful in different branches of algebra, topology and
geometry since they have connections to several different topics such as
permutation groups~\cite{MR3399387}, quasigroups~\cite{MR3353113},  symmetric
spaces~\cite{MR3127819}, Hopf algebras~\cite{MR1994219}, etc. 

Quandles have a very interesting cohomology theory that first appeared
in~\cite{MR1990571} and independently in~\cite{MR1800709}.
This theory is somewhat based on the rack cohomology introduced
in~\cite{MR2063665} by Fenn, Rourke and Sanderson. As in the case of groups,
2nd quandle cohomology groups can be used to produce new
quandles by means of extensions. 

The explicit computation of quandle cohomology groups is an
important problem relevant to different areas of current research.
The 2nd quandle cohomology group is particularly important
since it has many applications going from knot theory to Hopf algebras.

In~\cite{MR1990571}, Carter, Jelsovsky, Kamada, Langford and Saito
 used
quandle cohomology classes to produce powerful invariants of
classical links and their higher dimensional analogs. The
invariants based on quandle $2$-cocycles improve the effectiveness of the
quandle-coloring invariants since, for example, 
they distinguish knots from their mirror images. These invariants require an
explicit description of $2$-cocycles. 

In the Hopf algebra context, quandles and their cohomology parametrize
Yetter-Drinfeld modules. In turn these modules are crucial ingredients in the
classification problem of finite-dimensional Hopf algebras with non-abelian
coradical. Indeed, an important step of the Lifting Method proposed by
Andruskiewitsch and Schneider to solve this classification
problem is the explicit computation of the 2nd cohomology of finite quandles,
see~\cite{MR2799090}. 

\subsection{}

In this work we give an explicit description of the second cohomology
group of a finite indecomposable quandle. Our presentation is made by means of
the characters of a certain finite group. This reduces the problem of computing
$2$-cocycles of a quandle to an easy manipulation involving cosets in a finite
group. Our method is based on a result of Etingof and Gra\~na~\cite{MR1948837}
which relates the 2nd cohomology of a quandle and the first cohomology of an
infinite group. 

\subsection{}
\label{subsection:basics}
We now review the basics of our construction. 
Let $X$ be a finite quandle. Recall that the \emph{enveloping group} of $X$ is the group 
\begin{equation}
	\label{eq:G_X}
	G_X=\lg x\in X:xy=(x\trid y)x\rg.
\end{equation}
Assume that $X$ is indecomposable and fix $x_0\in X$. Under the identification
$\langle x_0\rangle\simeq\Z$ we show in Lemma~\ref{lem:semidirecto} that
$G_X\simeq N_X\rtimes\Z$, where $N_X$ is the commutator group $[G_X,G_X]$ of
$G_X$.  The group $G_X$ acts transitively on $X$ in a natural way, hence so
does $N_X$, see Corollary \ref{cor:semidirecto}.  We denote by $N_0$ the
stabilizer of $N_X$ on $x_0$: this is a finite group \emph{cf.} Lemma \ref{lem:isoclinic}.

Fix an abelian group $A$ and let $M=\Fun(X,A)$ be the right $G_X$-module of
functions $X\to A$, i.e. $(f\cdot x)(y)=f(x\triangleright y)$ for $x,y\in X$
and $f\in M$. 

We prove that there is a commutative diagram with exact columns 
\[
	\xymatrix{
	& 0\ar[d] & 0\ar[d]\\
	& H^1(\Z,M^N)\ar[r]^{\qquad\sim}\ar[d]_{\inf} & A\ar[d]^{\iota}\\
	H^2(X,A)\ar@{-}[r]^{\sim} & H^1(G_X,M)
	\ar[d]_{\res} & A\times H^1(N_X,M)\ar[d]^{\pi}\\
	& H^1(N_X,M)^{\Z}\ar@{=}[r]\ar[d] & H^1(N_X,M)\ar[r]^{\sim}\ar[d] & \Hom(N_0,A)\\
	& 0 & 0
	}
\]
where the isomorphism 
\begin{equation}
	\label{eqn:EG}
	H^2(X,A)\simeq H^1(G_X,M),\quad
	q\longmapsto f_q,
\end{equation}
is~\cite[Corollary 5.4]{MR1948837}, see also \eqref{eqn:fq}; $\inf$ and $\res$
denote the inflation-restriction maps and $\iota$ and $\pi$ denote the
canonical inclusion and projection. 

See Lemmas \ref{lem:1}, \ref{lem:step5} and Proposition \ref{pro:2} for a proof
of the isomorphisms and the equality in the rows of the diagram.  The exactness
of the first column is a well-known fact \emph{cf.} Lemma \ref{lem:Brown}.  We
show that it splits in Lemma \ref{lem:split}. 

By diagram chasing, we derive an isomorphism 
\begin{equation*}
	H^2(X,A)\simeq A\times\Hom(N_0,A).
\end{equation*}
From this isomorphism we obtain an explicit description of rack and quandle
$2$-cocycles with values in any abelian group $A$, see Theorem \ref{thm:main}.

We denote by $f\mapsto f_0$ the map $H^1(G_X,M)\to \Hom(N_0,A)$ deduced from
the diagram above.  

Our first main result reads as follows, see \S
\ref{sec:proof} for a proof.

\begin{thm}
	\label{thm:main}
	Let $X$ be a finite indecomposable quandle, $x_0\in X$ and $A$ an abelian
	group with trivial $G_X$-action. Then
	\begin{equation}
	\label{eqn:map}
		H^2(X,A)\simeq A\times\Hom(N_0,A),
		\quad
		q\mapsto (q_{x_0,x_0},(f_q)_0).
	\end{equation}
\end{thm}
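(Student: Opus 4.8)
The plan is to assemble the commutative diagram of the introduction and then make the two coordinates of the resulting isomorphism explicit. First I would replace $H^2(X,A)$ by $H^1(G_X,M)$ via the Etingof--Gra\~na isomorphism \eqref{eqn:EG}, encoding a $2$-cocycle $q$ by its associated group $1$-cocycle $f_q$. The computation of $H^1(G_X,M)$ then rests on the inflation--restriction sequence (the middle column of the diagram)
\[
  0\to H^1(\Z,M^N)\xrightarrow{\inf}H^1(G_X,M)
  \xrightarrow{\res}H^1(N_X,M)^{\Z}\to 0,
\]
which is exact by Lemma~\ref{lem:Brown} and splits by Lemma~\ref{lem:split}. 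Plugging in the three row identifications $H^1(\Z,M^N)\simeq A$ (Lemma~\ref{lem:1}), $H^1(N_X,M)^{\Z}=H^1(N_X,M)$ (Proposition~\ref{pro:2}) and $H^1(N_X,M)\simeq\Hom(N_0,A)$ (Lemma~\ref{lem:step5}) produces an isomorphism $H^1(G_X,M)\simeq A\times\Hom(N_0,A)$; composing with \eqref{eqn:EG} yields the abstract version of the statement.

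It then remains to identify the two coordinates of this map. The second coordinate is essentially tautological: by the definition of $f\mapsto f_0$ it is the composite $H^1(G_X,M)\xrightarrow{\res}H^1(N_X,M)\xrightarrow{\sim}\Hom(N_0,A)$, so under \eqref{eqn:EG} a cocycle $q$ is sent to $(f_q)_0$, as claimed. For the first coordinate I would use the retraction $r\colon H^1(G_X,M)\to H^1(\Z,M^N)\simeq A$ afforded by the splitting, together with the fact that the isomorphism $H^1(\Z,M^N)\simeq A$ of Lemma~\ref{lem:1} evaluates a cocycle on the generator $x_0$ of $\langle x_0\rangle\simeq\Z$ and then at the point $x_0\in X$. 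Unwinding the explicit formula \eqref{eqn:fq} for $f_q$ should give $f_q(x_0)(x_0)=q_{x_0,x_0}$, so that the first coordinate of $q$ is exactly $q_{x_0,x_0}$.

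The hard part will be this last verification. The splitting of Lemma~\ref{lem:split} provides a section abstractly, but to conclude that $r(f_q)=q_{x_0,x_0}$ one must control the section concretely enough to see that the retraction leaves the value $f_q(x_0)(x_0)$ unchanged, rather than adding a correction coming from the restriction summand. Once the section is seen to be compatible with evaluation at $x_0$, the diagram chase closes and the explicit formula \eqref{eqn:map} follows.
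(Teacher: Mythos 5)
Your outline is the paper's own proof: both pass to $H^1(G_X,M)$ via Theorem~\ref{thm:EG}, split the inflation--restriction sequence of Lemma~\ref{lem:Brown} using Lemma~\ref{lem:split}, and identify the outer terms by Lemma~\ref{lem:1} and Proposition~\ref{pro:2}. Two minor slips: your citations for the third column are swapped (the equality $H^1(N_X,M)^{\Z}=H^1(N_X,M)$ is Lemma~\ref{lem:step5}, while $H^1(N_X,M)\simeq\Hom(N_0,A)$ is Proposition~\ref{pro:2}, via Lemmas~\ref{lem:step4} and~\ref{lem:step6}); and the identity $f_q(x_0)(x_0)=q_{x_0,x_0}$ needs no unwinding of the recursion \eqref{eqn:fq}, since on generators $x\in X$ the correspondence of Theorem~\ref{thm:EG} is by definition $f_q(x)(y)=q_{x,y}$.

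The step you defer --- checking that the retraction is evaluation at $(x_0,x_0)$, with no correction coming from the restriction summand --- is indeed the only computation in the paper's proof, but it is not hard, because Lemma~\ref{lem:split} provides the retraction by an explicit formula rather than abstractly:
$j(f)(\ell)=\frac{1}{|N|}\sum_{n\in N}\bigl(f(nx_0^{\ell})-f(n)\bigr)$.
Using the cocycle condition $f(nx_0)=f(n)\cdot x_0+f(x_0)$ one gets
\begin{align*}
j(f)(1)(x_0)
&=f(x_0)(x_0)+\frac{1}{|N|}\sum_{n\in N}\bigl((f(n)\cdot x_0)(x_0)-f(n)(x_0)\bigr)\\
&=f(x_0)(x_0)+\frac{1}{|N|}\sum_{n\in N}\bigl(f(n)(x_0\trid x_0)-f(n)(x_0)\bigr)
=f(x_0)(x_0),
\end{align*}
the sum vanishing by the quandle axiom $x_0\trid x_0=x_0$ (this is where the quandle, as opposed to rack, hypothesis enters). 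Composing $j$ with the isomorphism of Lemma~\ref{lem:1} thus gives the retraction $j_0(f)=j(f)(1)(x_0)=f(x_0)(x_0)$ for $\inf_0$, and the splitting lemma turns $f\mapsto(j_0(f),f_0)=(f(x_0)(x_0),f_0)$ into an isomorphism $H^1(G_X,M)\simeq A\times\Hom(N_0,A)$; composing with $q\mapsto f_q$ yields \eqref{eqn:map}. So your plan is sound and coincides with the paper's; the only missing ingredient was to use the concrete formula for $j$ instead of treating the splitting as a black box.
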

In particular this shows that the non-constant 2-cocycles on $X$ are controlled by a finite group.

\subsection{}
Our second main result is a precise recipe to reconstruct a cocycle $q\in H^2(X,A)$ from a datum 
$(a,g)\in A\times\Hom(N_0,A)$. That is, we give a converse to the map in~\eqref{eqn:map} to
build all explicit 2-cocycles of a given quandle. To do this, we need to introduce some extra notation.

First, we fix a \emph{good coset
decomposition} 
\[
N_X=\bigsqcup_{i=0}^k \sigma_iN_0, \] into $N_0$-cosets, i.e. the 
representatives $\sigma_0,\dots,\sigma_k$ are chosen so that:
\begin{enumerate}
	\item $\sigma_0=1$;
	\item for each $i\in\{0,\dots,k\}$ there is $j\in\{0,\dots,k\}$ such that $x_0\trid
		\sigma_i=\sigma_j$;
	\item for each $x\in X$ there is $j\in\{0,\dots,k\}$ such that $\sigma_j\trid x_0=x$. 
\end{enumerate}

The existence of such a decomposition is given in Proposition~\ref{pro:cosets},
together with a recursive method for constructing it.  

We define 
$\sigma:N\to \{\sigma_0,\dots,\sigma_k\}$, $\sigma(n)=\sigma_i$ if $n\in \sigma_iN_0$. 
We set, 
{\it cf.} \eqref{eqn:c(n)},  
\begin{equation*}
	c(n)=\sigma(n)^{-1}n\in N_0.
\end{equation*}
Given $y\in X$ and $j\in\{0,\dots,k\}$ such that $\sigma_j\trid x_0=y$ we write
\begin{equation*}
\sigma_y\coloneqq\sigma_j.
\end{equation*}

Our second main result is the following, see \S \ref{sec:reconstruction} for the proof.
\begin{thm}
	\label{thm:reconstruction}
	Let $X$ be a finite indecomposable quandle, $x_0\in X$ and $A$ an abelian
	group with trivial $G_X$-action. Let 
	$N_X=\bigsqcup_{i=0}^k \sigma_iN_0$ be 
	a good decomposition of $N_X$ into $N_0$-cosets. For each $a\in A$ and
	$g\in \Hom(N_0,A)$, the map $q\colon X\times X\to A$ given by 
	\begin{align}\label{eqn:cociclo2}
		q_{x,y}=a+g(c(x\sigma_yx_0^{-1}))
	\end{align}
	is a $2$-cocycle of
	$X$ with values in $A$. 
\end{thm}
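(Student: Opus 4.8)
The plan is to produce the cocycle not by checking the rack $2$-cocycle identity by hand, but by exhibiting an explicit group $1$-cocycle $f\in Z^1(G_X,M)$ that maps to $(a,g)$ under the isomorphism of Theorem~\ref{thm:main}, and then reading off $q$ through the inverse of the Etingof--Gra\~na isomorphism \eqref{eqn:EG}: according to the explicit form recalled in \eqref{eqn:fq}, a group cocycle $f$ corresponds to the rack cocycle $(x,y)\mapsto f(x)(y)$, so once $f$ is a genuine $1$-cocycle the resulting $q$ is automatically a $2$-cocycle. Throughout I use that $x\trid y=xyx^{-1}$ in $G_X$, that the module action extends to $(f\cdot w)(y)=f(w\,y\,w^{-1})$ for $w\in G_X$, and that the abelianization $\ell\colon G_X\to G_X^{\ab}\simeq\Z$ sends every $x\in X$ to $1$, so that $N_X=\ker\ell$; in particular $x\sigma_yx_0^{-1}\in N_X$, which makes $c(x\sigma_yx_0^{-1})$ meaningful.

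Following the two summands of the split middle column of the diagram, I write $f=f_a+f_g$. For the constant part, the class $a\in A\simeq H^1(\Z,M^{N_X})$ inflates to the cocycle $f_a(w)=$ the constant function with value $\ell(w)\,a$; a direct check shows this is a crossed homomorphism with $f_a(x)(y)=a$, producing the summand $a$ and matching $q_{x_0,x_0}=a$. For the $\Hom(N_0,A)$ part, I build the Shapiro cocycle attached to $g$ explicitly as
\[
\phi\in Z^1(N_X,M),\qquad \phi(n)(z)=g\bigl(c(n\,\sigma_z)\bigr),
\]
where for $z\in X$ I write $z=\sigma_z\trid x_0$ as in the statement (well defined by property~(3)). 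That $\phi$ is a cocycle, and that its Shapiro image is $g$, both reduce to the multiplicativity relation $c(mn\sigma_z)=c\bigl(m\,\sigma(n\sigma_z)\bigr)\,c(n\sigma_z)$, which follows at once from $c(n)=\sigma(n)^{-1}n$, together with the normalization $\sigma_0=1$ from the good decomposition (giving $\phi(n_0)(x_0)=g(n_0)$ for $n_0\in N_0$).

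The heart of the argument is the extension of $\phi$ from $N_X$ to $G_X=N_X\rtimes\langle x_0\rangle$ supplied by the splitting of Lemma~\ref{lem:split}. I claim the naive extension $f_g(n\,x_0^{m})=\phi(n)\cdot x_0^{m}$, i.e. $f_g(x_0)=0$, is already a well-defined $1$-cocycle, which amounts to the strict invariance $\phi(n)=\phi(x_0nx_0^{-1})\cdot x_0$. Using property~(2) in the form $\sigma_{x_0\trid z}=x_0\sigma_zx_0^{-1}$ and the conjugation identities $\sigma(x_0px_0^{-1})=x_0\sigma(p)x_0^{-1}$, $c(x_0px_0^{-1})=x_0c(p)x_0^{-1}$ for $p\in N_X$ (which hold because $x_0N_0x_0^{-1}=N_0$), this strict invariance collapses to the single requirement $g(x_0n_0x_0^{-1})=g(n_0)$ for all $n_0\in N_0$. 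This is the main obstacle, and it is exactly where the hypotheses pay off: the induced $\Z$-action on $\Hom(N_0,A)\simeq H^1(N_X,M)$ is precomposition with conjugation by $x_0$, and since $A$ is a trivial module there are no coboundaries in $\Hom(N_0,A)$, so the triviality of this action recorded by the equality $H^1(N_X,M)^{\Z}=H^1(N_X,M)$ of Proposition~\ref{pro:2} yields precisely $g\circ\mathrm{conj}_{x_0}=g$. I expect checking the $\Z$-equivariance of the Shapiro isomorphism, so that Proposition~\ref{pro:2} transports to this conjugation-invariance of $g$, to be the delicate point.

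With the extension in hand the computation is immediate. Writing $x=(xx_0^{-1})\,x_0$ with $xx_0^{-1}\in N_X$, the cocycle relation gives
\[
f_g(x)(y)=\bigl(\phi(xx_0^{-1})\cdot x_0\bigr)(y)=\phi(xx_0^{-1})(x_0\trid y)=g\bigl(c(xx_0^{-1}\,\sigma_{x_0\trid y})\bigr)=g\bigl(c(x\,\sigma_yx_0^{-1})\bigr),
\]
where the last equality again uses $\sigma_{x_0\trid y}=x_0\sigma_yx_0^{-1}$. Adding the constant contribution $f_a(x)(y)=a$ gives $f(x)(y)=a+g(c(x\sigma_yx_0^{-1}))=q_{x,y}$. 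Since $f=f_a+f_g\in Z^1(G_X,M)$, the Etingof--Gra\~na correspondence \eqref{eqn:EG}--\eqref{eqn:fq} shows that $q$ is a $2$-cocycle of $X$; moreover $f$ is by construction a preimage of $(a,g)$, so this $q$ simultaneously realizes the inverse of the map of Theorem~\ref{thm:main}, completing the proof.
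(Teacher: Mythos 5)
Your argument is correct, and at bottom it is the paper's own proof: both constructions produce the identical group $1$-cocycle $f(u)(y)=\d(u)\,a+g\bigl(c(u\sigma_yx_0^{-\d(u)})\bigr)$ on $G_X$ mapping to $(a,g)$, and both conclude via the Etingof--Gra\~na correspondence \eqref{eqn:fq} that $q_{x,y}=f(x)(y)$ is a rack $2$-cocycle. The difference is organizational. The paper verifies the cocycle identity for $f$ on all of $G_X$ in one computation, with the coset bookkeeping isolated in Lemma~\ref{lem:la_locura}; you assemble $f=f_a+f_g$ from the two summands of the split sequence, verify the cocycle identity only on $N_X$ (your multiplicativity relation is Remark~\ref{rem:c_casi_es_morfismo} combined with the last assertion of Lemma~\ref{lem:la_locura}), and then extend to $G_X=N_X\rtimes\langle x_0\rangle$ by the naive formula, at the price of the strict invariance $\phi(n)=\phi(x_0nx_0^{-1})\cdot x_0$.

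Concerning the step you flag as delicate: your transport of Proposition~\ref{pro:2} is valid as you run it --- under $f\mapsto f_0$ the action \eqref{eqn:action-invariants} does become precomposition with conjugation by $x_0$ (this uses $x_0\trid x_0=x_0$), and since $f\mapsto f_0$ kills coboundaries and is injective on cohomology, the class-level equality $H^1(N_X,M)^{\Z}=H^1(N_X,M)$ upgrades to an equality of homomorphisms. But the detour is unnecessary, because conjugation by $x_0$ is the \emph{identity} on $N_0$, not merely trivial on characters: every $n_0\in N_0$ satisfies $n_0\cdot x_0=x_0$ in $X$, and since the canonical map $X\to G_X$ is $G_X$-equivariant (on generators this is the defining relation $xyx^{-1}=x\trid y$, and one extends by induction on word length), this gives $n_0x_0n_0^{-1}=x_0$ in $G_X$. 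Hence $x_0$ centralizes $N_0$ and $g(x_0n_0x_0^{-1})=g(n_0)$ holds for trivial reasons; this same fact is what is silently used in the paper's proofs of Lemmas~\ref{lem:c-invariant} and~\ref{lem:step5}. With that observation your extension step closes immediately, and your final computation $f(x)(y)=a+g(c(x\sigma_yx_0^{-1}))$, together with $f(x_0)(x_0)=a$ and $f_0=g$, matches the paper's exactly.
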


Combining Theorems~\ref{thm:main},~\ref{thm:reconstruction} and the isomorphism
\eqref{eqn:EG}, namely 
\begin{align*}
 q_{x,y}=f_q(x)(y),\quad
 q\in H^2(X,A),
\end{align*}
we immediately obtain the following corollary.

\begin{cor}
	\label{cor:reconstruction}
	Let $X$ be a finite indecomposable quandle, $x_0\in X$ and $A$ an abelian
	group with trivial $G_X$-action. Let 
	$N_X=\bigsqcup_{i=0}^k \sigma_iN_0$ be 
	a good decomposition of $N_X$ into $N_0$-cosets and let $q\in H^2(X,A)$. Then 
	there exists 
	$a\in A$ and
	$g\in \Hom(N_0,A)$ such that~\eqref{eqn:cociclo2} holds for all $x,y\in X$.
\end{cor}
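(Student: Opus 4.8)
The plan is to read the corollary off the two theorems by checking that the reconstruction recipe of Theorem~\ref{thm:reconstruction} is an explicit inverse to the isomorphism of Theorem~\ref{thm:main}. Write $\Psi\colon H^2(X,A)\to A\times\Hom(N_0,A)$, $\Psi(q)=(q_{x_0,x_0},(f_q)_0)$, for the isomorphism of Theorem~\ref{thm:main}, and let $\Phi\colon A\times\Hom(N_0,A)\to H^2(X,A)$ be the map sending $(a,g)$ to the class of the cocycle $q^{(a,g)}$ defined by the right-hand side of~\eqref{eqn:cociclo2}; this is well defined by Theorem~\ref{thm:reconstruction}, and since~\eqref{eqn:cociclo2} is additive in the pair $(a,g)$ it is a homomorphism of abelian groups. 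With this notation the corollary amounts to the surjectivity of $\Phi$: given $q$, choosing $(a,g):=\Psi(q)$ and knowing $q=\Phi(a,g)$ yields $q_{x,y}=q^{(a,g)}_{x,y}$ through the identification $q_{x,y}=f_q(x)(y)$, which is exactly~\eqref{eqn:cociclo2}.

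First I would show that $\Psi\circ\Phi=\id$ on $A\times\Hom(N_0,A)$. Because $\Psi$ is a bijection, any right inverse must coincide with $\Psi^{-1}$, so this identity forces $\Phi=\Psi^{-1}$ and in particular makes $\Phi$ surjective, completing the reduction above. To evaluate $\Psi(\Phi(a,g))=\big(q^{(a,g)}_{x_0,x_0},(f_{q^{(a,g)}})_0\big)$ I would treat the two coordinates separately.

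The first coordinate is immediate from the good-decomposition hypotheses. Since $\sigma_0=1$ and $1\trid x_0=x_0$, the representative attached to $y=x_0$ is $\sigma_{x_0}=\sigma_0=1$, whence $x_0\sigma_{x_0}x_0^{-1}=1$; as $1\in N_0=\sigma_0N_0$ we get $c(1)=\sigma_0^{-1}=1$, and therefore $q^{(a,g)}_{x_0,x_0}=a+g(1)=a$ because $g$ is a homomorphism.

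The second coordinate, $(f_{q^{(a,g)}})_0=g$, is the real point and the step I expect to be the main obstacle. It requires unwinding the definition of the map $f\mapsto f_0$ coming from the diagram of \S\ref{subsection:basics}, together with the combinatorics of the functions $\sigma$ and $c$ on a good coset decomposition. However, this is precisely the computation underlying the proof of Theorem~\ref{thm:reconstruction} in \S\ref{sec:reconstruction}: the formula~\eqref{eqn:cociclo2} is engineered so that, after restricting the associated $1$-cocycle to $N_X$ and evaluating the resulting homomorphism $N_0\to A$ at $x_0$, one recovers $g$. I would therefore either quote that computation directly or record it as a short lemma, after which $\Psi\circ\Phi=\id$ holds and the corollary follows.
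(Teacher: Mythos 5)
Your proposal is correct and is essentially the paper's own proof: the paper obtains the corollary ``immediately'' by combining Theorem~\ref{thm:main}, Theorem~\ref{thm:reconstruction} and the identification $q_{x,y}=f_q(x)(y)$ from \eqref{eqn:EG}, which is exactly your $\Psi\circ\Phi=\id$ argument. The second-coordinate computation you defer to is indeed already contained in the proof of Theorem~\ref{thm:reconstruction}, which shows that the $1$-cocycle $f(u)(y)=\d(u)\,a+g\left(c(u\sigma_yx_0^{-\d(u)})\right)$ maps to $(a,g)$ under \eqref{eqn:correspondence-proof}, so quoting that computation is legitimate.
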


Corollary~\ref{cor:reconstruction} has many applications and can be used for
explicit calculations of rack cohomology groups of quandles. In particular, if
the commutator subgroup $N_X$ acts regularly on $X$, then $N_0=1$ and hence we
obtain the following corollary.

\begin{cor}
    Let $X$ be a finite indecomposable quandle. If the action of $N_X$ on $X$
    is regular, then $H^2(X,\C^\times)\simeq\C^\times$. 
\end{cor}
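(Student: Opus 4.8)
The plan is to reduce this statement directly to Theorem~\ref{thm:main}, the only work being to show that the regularity hypothesis forces the stabilizer $N_0$ to be trivial. Recall from Corollary~\ref{cor:semidirecto} that $N_X$ acts transitively on $X$; this transitivity is already built into our setup, so the additional content of the word \emph{regular} is precisely that the action is free, i.e.\ every point stabilizer is trivial. Since $N_0$ was defined (Lemma~\ref{lem:isoclinic}) as the stabilizer of the $N_X$-action at the point $x_0$, the regularity hypothesis gives immediately $N_0=\{1\}$.

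Having established $N_0=\{1\}$, I would simply apply Theorem~\ref{thm:main} with $A=\C^\times$, which is an abelian group with trivial $G_X$-action. The theorem yields
\[
	H^2(X,\C^\times)\simeq \C^\times\times\Hom(N_0,\C^\times).
\]
Because $N_0$ is the trivial group, the character group $\Hom(N_0,\C^\times)$ is trivial as well, so the second factor collapses and we are left with $H^2(X,\C^\times)\simeq\C^\times$, exactly as asserted.

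Since each step is either a direct invocation of Theorem~\ref{thm:main} or an unwinding of the definition of a regular action, I do not anticipate any genuine obstacle here: the result is a clean specialization of the main theorem. The only point deserving a moment's attention is the identification $N_0=\{1\}$; one must confirm that the stabilizer appearing in the \emph{definition} of $N_0$ coincides with the point stabilizer featuring in the notion of a regular action. As both refer to the fixer of $x_0$ under the $N_X$-action, this is immediate by construction, and no separate argument is needed.
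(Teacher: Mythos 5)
Your proposal is correct and follows exactly the paper's own reasoning: the paper likewise notes that regularity of the $N_X$-action forces $N_0=1$, and then invokes Theorem~\ref{thm:main} (via the isomorphism $H^2(X,\C^\times)\simeq\C^\times\times\Hom(N_0,\C^\times)$) so that the character-group factor collapses. No gaps; this is a clean specialization of the main theorem, just as in the paper.
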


\subsection{}
The paper is organized as follows. Preliminaries on racks and quandles,
cohomology theory of groups, and cohomology theories of racks and quandles
appear in Section~\ref{sec:preliminaries}.  Our first main result, 
Theorem~\ref{thm:main}, is proved in Section~\ref{sec:main}. 
Theorem~\ref{thm:reconstruction} is proved in Section~\ref{sec:reconstruction}. 
Applications of our theory 
are given in Section~\ref{sec:applications}.
These applications include the calculations of the 2nd rack cohomology
group of: (a) the quandle associated with the conjugacy class of transpositions, see Theorem~\ref{thm:FK};
(b) affine racks of size $p$ and $p^2$, where $p$ is a prime number, see
Propositions~\ref{pro:affine},~\ref{pro:p},~\ref{pro:homology-affine}
and~\ref{pro:explicit-affine};
and (c) another
proof of Eisermann's formula for computing the 2nd quandle homology group of a
quandle, see Theorem~\ref{thm:eisermann}.

\section{Preliminaries}
\label{sec:preliminaries}

\subsection{Notation}

For a set $X$ we denote by $\Sym_X$ the group of permutations $X\to X$. If $X$
is finite of cardinal $|X|\in\mathbb{N}$, then we identify $\Sym_{|X|}=\Sym_X$.
For any group $G$ we denote by $[G,G]$ its commutator subgroup and $G_{\ab}$
its abelianization, i.e. $G_{\ab}=G/[G,G]$. In addition, $Z(G)$ is the center
of $G$ and $G_G(g)=\{h\in G:hg=gh\}$ for $g\in G$. We denote by $\Aut(G)$ the
group of automorphisms $G\to G$; if $\gamma\in\Aut(G)$, then $\ord(\gamma)$
is the order of $\gamma$. 

Let $M$ be an abelian group equipped with a $G$-action. We denote by
$H^n(G,M)$, $n\geq 0$, the $n$th cohomology group of $G$ with coefficients on
$M$. We denote by $Z^n(G,M)$, resp. $B^n(G,M)$, the groups of cocycles, resp.
cobordisms, of $G$ with values on $M$. We refer the reader to \cite{MR1324339}
for unexplained notation and terminology.

\subsection{Racks}\label{sec:racks}

A \emph{rack} is a non-empty set $X$ together with a binary operation
$\trid:X\times X\to X$ such that the maps $\varphi_x=x\trid -:X\to X$,
$y\mapsto x\trid y$, are bijective for each $x\in X$, and $x\trid (y\trid
z)=(x\trid y)\trid (x\trid z)$ for all $x,y,z\in X$. A {\it quandle} is a rack
that further satisfies $x\trid x=x$ for all $x\in X$. A prototypical example of
a rack is a group $G$ with $\trid$ given by conjugation. A rack is
\emph{indecomposable} if the inner group $\Inn(X)=\langle\varphi_x:x\in
X\rangle\leq\Sym_X$ acts transitively on $X$. 

The enveloping group $G_X$ \emph{cf.}~\eqref{eq:G_X} also acts on $X$, and this
action is readily seen to be transitive when  $X$ is indecomposable. The group
$G_X$ is infinite. There is a finite analogue of this group, which is
constructed as follows: For each $x$, let $n_x=\ord\varphi_x$. Then the
subgroup $Z_X=\lg x^{n_x}, x\in X\rg\leq G_X$ is normal and the 
quotient $F_X=G_X/Z_X$ is finite,
see~\cite[\S2]{MR2803792}. We write $N_X=[G_X,G_X]$ to denote the commutator
subgroup of $G_X$. 

\begin{lem}{\cite[Lemma 1.10]{MR3276225}}
	\label{lem:isoclinic}
	Let $X$ be an indecomposable quandle. Then $N_X\simeq[F_X,F_X]$. In particular,
	$N_X$ is finite. 	
\end{lem}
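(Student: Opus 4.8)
The plan is to exploit the defining quotient $G_X \twoheadrightarrow F_X = G_X/Z_X$ and reduce the claim to the single statement $Z_X \cap N_X = 1$. Since a surjection of groups carries commutator subgroup onto commutator subgroup, the quotient map $G_X \to F_X$ restricts to a surjection $N_X = [G_X,G_X] \to [F_X,F_X]$ whose kernel is exactly $Z_X \cap N_X$. Thus once I show $Z_X \cap N_X = 1$ I obtain the desired isomorphism $N_X \simeq [F_X,F_X]$, and the finiteness of $N_X$ then follows at once from the finiteness of $F_X$, hence of $[F_X,F_X]$.

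To control $Z_X$ I would first analyse its generators inside $G_X$. The defining relations give $x y x^{-1} = x \trid y$, so conjugation by a generator $x$ permutes the generating set $X$ according to $\varphi_x$. As $X$ is indecomposable, all the $\varphi_x$ are conjugate in $\Inn(X)$ via the identity $\psi \varphi_x \psi^{-1} = \varphi_{\psi(x)}$, and hence share a common order $n \coloneqq n_x = \ord \varphi_x$. Consequently conjugation by $x^{n}$ acts as $\varphi_x^{\,n} = \id$ on the generators, so each $x^{n}$ is central and $Z_X$ is a central, in particular abelian, subgroup. Moreover, writing $y = z \trid x = z x z^{-1}$ and using centrality yields $y^{n} = z x^{n} z^{-1} = x^{n}$; since indecomposability lets one join any two elements of $X$ by such operations, all the generators $x^{n}$ coincide. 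Therefore $Z_X = \langle x_0^{n}\rangle$ is cyclic, generated by a single central element $\zeta \coloneqq x_0^{n}$.

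Finally I would identify $N_X$ as a kernel by computing $(G_X)_{\ab}$. Abelianising the relation $x y = (x \trid y) x$ gives $y = x \trid y$, so in $(G_X)_{\ab}$ every $\varphi_x$-translate of a class equals that class; indecomposability then forces all generators to collapse to a single class $t$, whence $(G_X)_{\ab} \simeq \Z = \langle t\rangle$ and $N_X = \ker(G_X \to \Z)$. Under this map $\zeta = x_0^{n} \mapsto n t$, which has infinite order, so $Z_X \to \Z$ is injective and $Z_X \cap N_X = \ker(Z_X \to \Z) = 1$, completing the argument.

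The main obstacle I anticipate is the middle step: proving that the elements $x^{n}$ are all equal, so that $Z_X$ is infinite cyclic with non-torsion image in $(G_X)_{\ab}$. This is exactly where centrality and indecomposability must be combined with care. Once $Z_X \cap N_X = 1$ is established, the comparison of the two extensions $1 \to N_X \to G_X \to \Z \to 1$ and $1 \to [F_X,F_X] \to F_X \to \Z/n \to 1$ through the quotient $\pi$ is purely formal.
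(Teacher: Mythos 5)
Your proof is correct, but it follows a genuinely different route from the paper, which never proves the isomorphism at all: the paper cites \cite[Lemma 1.10]{MR3276225} for $N_X\simeq[F_X,F_X]$, and its only in-text argument concerns the finiteness claim, via Lemma~\ref{lem:Schur} (the center $Z(G_X)$ is the kernel of the transitive action of $G_X$ on the finite set $X$, hence has finite index) combined with Schur's theorem that a group whose center has finite index has finite commutator subgroup. You instead establish the isomorphism directly by reducing it to $Z_X\cap N_X=1$: each $x^{n_x}$ is central because conjugation by $x$ realizes $\varphi_x$ on the generators; the identity $\psi\varphi_x\psi^{-1}=\varphi_{\psi(x)}$ (a consequence of self-distributivity) together with transitivity gives a common order $n$ and forces all the powers $x^{n}$ to coincide, so $Z_X=\langle x_0^{n}\rangle$; abelianizing the defining relations gives $(G_X)_{\ab}\simeq\Z$, so $N_X=\ker\d$; and $\d(x_0^{n})=n\neq 0$ makes the intersection trivial. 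All the steps check out: the collapse of the powers also survives steps by $\varphi_z^{-1}$ (the relation $x^{n}=y^{n}$ is symmetric in $x$ and $y$), and your abelianization computation in passing re-proves Lemma~\ref{lem:semidirecto}(2) more cleanly than the paper's induction on word length. As for what each approach buys: yours is self-contained except for the finiteness of $F_X$, which the paper also imports from \cite[\S 2]{MR2803792}, and it yields extra structural information ($Z_X$ is central infinite cyclic, and the splitting $G_X\simeq N_X\rtimes\Z$ descends to $F_X\simeq N_X\rtimes\Z_{n}$); the paper's Schur-theorem route gives the finiteness of $N_X$ without ever mentioning $F_X$ or $Z_X$, but it cannot produce the isomorphism. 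Note finally that, as everywhere in this paper, $X$ must be finite for either argument to work (one needs $\ord\varphi_x<\infty$ and $F_X$ finite), even though the statement omits the word.
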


The last claim of Lemma~\ref{lem:isoclinic} also follows from the following
result and a theorem of Schur, see for
example~\cite[Theorem 5.32]{MR1307623}. 

\begin{lem}
	\label{lem:Schur}
	Let $X$ be a finite indecomposable quandle. Then all conjugacy classes of
    $G_X$ are finite.
\end{lem}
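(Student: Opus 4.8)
The plan is to show that $G_X$ is an \emph{FC-group} (all conjugacy classes finite) by first treating the generators and then propagating the property to the whole group. The key observation is that the defining relations $xy=(x\trid y)x$ say precisely that conjugating the generator $y$ by the generator $x$ returns another generator: $xyx^{-1}=x\trid y=\varphi_x(y)\in X$. First I would record that this conjugation action is also compatible with inverses. Setting $z=\varphi_x(y)$ and using that $\varphi_x$ is a bijection of $X$, the same relation rewrites as $x^{-1}zx=\varphi_x^{-1}(z)\in X$ for every $z\in X$. Consequently conjugation by any generator or its inverse maps $X$ into $X$, and the conjugation action of $G_X$ on the finite generating set $X$ factors through the natural homomorphism $G_X\to\Inn(X)\leq\Sym_X$, $x\mapsto\varphi_x$.

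From this I conclude that the $G_X$-conjugacy class of any generator $y\in X$ coincides with its $\Inn(X)$-orbit inside $X$, and in particular is contained in the finite set $X$. Hence every generator of $G_X$ has a finite conjugacy class.

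To pass from generators to arbitrary elements, I would invoke the standard fact that the set of elements of a group having only finitely many conjugates, the \emph{FC-center}, is a subgroup. Indeed, if $a$ and $b$ have finite conjugacy classes then the centralizers $G_{G_X}(a)$ and $G_{G_X}(b)$ have finite index, so their intersection has finite index and is contained in $G_{G_X}(ab)$, giving $ab$ a finite class; moreover $G_{G_X}(a)=G_{G_X}(a^{-1})$. Since the FC-center is a subgroup containing the generating set $X$, it must equal all of $G_X$, which yields the claim.

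The argument is essentially self-contained, and I do not anticipate a serious obstacle. The only point requiring care is the verification that conjugation by the inverse of a generator preserves $X$, which is exactly where the bijectivity of each $\varphi_x$ is used. One should also note that distinct elements of $X$ may a priori become identified in $G_X$; this is harmless, since the conjugacy class of $y$ is the image in $G_X$ of a subset of the finite set $X$, hence finite in any case. Finiteness of $X$ is what makes the generating set finite and the individual classes finite, after which the FC-center argument does the rest.
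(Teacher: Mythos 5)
Your proof is correct, but it reaches the conclusion by a genuinely different mechanism than the paper, and the two only share their first step. Both arguments begin with the same observation: the defining relations of $G_X$, together with the bijectivity of each $\varphi_x$, show that conjugation by generators and by their inverses preserves the image of $X$ in $G_X$, i.e.\ conjugation restricted to generators is the quandle action. The paper then argues globally: the kernel of the induced homomorphism $G_X\to\Sym_X$ is contained in the center $Z(G_X)$ (the paper asserts equality, but only this inclusion is needed, and it is the inclusion that survives possible non-injectivity of $X\to G_X$), hence $[G_X:Z(G_X)]\leq|\Sym_X|<\infty$, and all conjugacy classes are finite at once because $C_{G_X}(g)\supseteq Z(G_X)$ gives the uniform bound $[G_X:C_{G_X}(g)]\leq[G_X:Z(G_X)]$. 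You instead argue locally-to-globally: each generator has finite class (contained in the image of the finite set $X$), and the FC-elements form a subgroup by the Poincar\'e-type centralizer argument, so that subgroup contains $\langle X\rangle=G_X$. The trade-off is this: the paper's route proves the stronger statement that $G_X$ is center-by-finite, which is precisely the hypothesis of the theorem of Schur invoked immediately before the lemma to re-derive the finiteness of $N_X$, whereas your route is a bit more elementary (no need to relate the kernel of the action to the center, and robust to the identification issue you rightly flag) but, as written, yields only the FC property and no uniform bound on class sizes. Note, however, that your first step already gives the paper's stronger conclusion in one extra line: $Z(G_X)=\bigcap_{x\in X}C_{G_X}(x)$ is a finite intersection of finite-index subgroups, hence has finite index. (Also, neither argument truly uses indecomposability: finiteness of $X$ suffices, since the index bound comes from $|\Sym_X|$, not from transitivity.)
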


\begin{proof}
	Since $G_X$ acts transitively on $X$ and the center $Z(G_X)$ is the kernel
	of this action, it follows that the index $[G_X:Z(G_X)]$ is finite. This
	implies that all conjugacy classes of $G_X$ are finite as 
	\[	
	[G_X:C_{G_X}(g)]\leq [G_X:Z(G_X)],
	\]	
    where $C_{G_X}(g)$ denotes the centralizer of $g$ in $G_X$.
\end{proof}

We consider the unique surjective group homomorphism 
\begin{align}\label{eqn:d}
 \d:G_X\to \Z
\end{align}
satisfying $\d(x)=1$ for all $x\in X$. In particular, this homomorphism shows
that $G_X$ is infinite and induces a notion of degree on $G_X$. 

\begin{lem}
	\label{lem:semidirecto}
	Let $X$ be an indecomposable finite quandle and $x_0\in X$. Then the following
	hold:
	\begin{enumerate}
		\item $G_X=\ker\d\rtimes\langle x_0\rangle$.
		\item $\ker\d=N_X$ if $X$ is indecomposable. 
	\end{enumerate}

	\begin{proof}
		Since $\ker\d$ is a normal subgroup of $G_X$, $\ker\d\langle
		x_0\rangle$ is a subgroup of $G_X$. It is clear that $\ker\d\cap\langle
		x_0\rangle=1$ comparing degrees. Finally $G_X=\ker\d\langle x_0\rangle$
		since $x=(xx_0^{-1})x_0\in\ker\d\langle x_0\rangle$ for all $x\in X$.

		It is clear that $N_X\subseteq\ker \d$. Next we prove the equality when
		$X$ is indecomposable.  Let $\ell:G_X\to \Z$ be defined as $\ell(g)=n$,
		if $g=x_{i_1}^{\eps_1}\dots x_{i_n}^{\eps_n}$, $\eps_i\in\{\pm1\}$,
		$i\in\{1,\dots,n\}$, is a a reduced expression of $g$ in terms of the
		generators of $G_X$. We show that $\ker \d\subseteq N_X$ by induction
		on  $\ell(g)$, $g\in\ker \d$.  If $\ell(g)=2$, then $g=x_i^{\pm
		1}x_j^{\mp1}$. So we may assume that $g=x_ix_j^{-1}$ (if not, take
		inverses).  Now, as $X$ is indecomposable, there is $h\in G_X$ such
		that $h\cdot x_j=x_i$.  Hence $g=hx_jh^{-1}x_j^{-1}\in N_X$. Now, if
		$\ell(g)>2$, then there is a reduced expression of $g$ (or $g^{-1}$) in
		which $g=g_1x_ix_j^{-1}g_2$, $x_i,x_j\in X$ and $g_1,g_2\in G_X$.  Now,
		on the one hand, $0=\d(g)=\d(g_1)+\d(g_2)$ and thus $g_1g_2\in N_X$ as
		$\ell(g_1g_2)<\ell(g)$. On the other,
		$g=(g_1x_ix_j^{-1}g_1^{-1})(g_1g_2)$ and therefore $g\in N_X$.  
	\end{proof}
\end{lem}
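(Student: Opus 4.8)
The plan is to treat the two assertions in turn, the first being a direct structural computation and the second hinging on the interplay between the degree map $\d$ and indecomposability.

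For (1), I would observe that $\ker\d$ is normal in $G_X$, being the kernel of the homomorphism \eqref{eqn:d}, so the product $\ker\d\,\langle x_0\rangle$ is a subgroup and it remains only to check the two conditions defining an internal semidirect product. The intersection is trivial by comparing degrees: every element of $\langle x_0\rangle$ is $x_0^{n}$ with $\d(x_0^{n})=n$, which vanishes only for $n=0$; in passing this shows $\langle x_0\rangle\simeq\Z$. For the generation, each generator $x\in X$ satisfies $x=(xx_0^{-1})x_0$ with $xx_0^{-1}\in\ker\d$, so the subgroup $\ker\d\,\langle x_0\rangle$ contains every generator and hence equals $G_X$. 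I note that this part uses only finiteness, not indecomposability.

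For (2), the inclusion $N_X\subseteq\ker\d$ is immediate, since $\Z$ is abelian and therefore $\d$ annihilates every commutator. The reverse inclusion is where indecomposability must enter, and I would extract it by passing to the abelianization $(G_X)_{\ab}=G_X/N_X$. Abelianizing the defining relation $xy=(x\trid y)x$ of \eqref{eq:G_X} gives $\bar y=\overline{x\trid y}$ in $(G_X)_{\ab}$ for all $x,y\in X$; equivalently, the class of a generator is fixed by each $\varphi_x$, hence by all of $\Inn(X)$. Transitivity of the $\Inn(X)$-action then forces every $\bar x$, $x\in X$, to coincide with a single class $t$, so that $(G_X)_{\ab}$ is cyclic, generated by $t$, and $\d$ descends to a surjection $(G_X)_{\ab}\to\Z$ with $t\mapsto 1$.

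The crux of the argument---and the step I expect to be the main obstacle---is upgrading this surjection to an isomorphism. A cyclic group that surjects onto $\Z$ cannot be finite, so $(G_X)_{\ab}$ is infinite cyclic and the induced map $(G_X)_{\ab}\to\Z$ is an isomorphism; reading this back, $\ker\d/N_X$ is trivial, i.e. $\ker\d=N_X$. As an alternative one could argue directly by induction on the word length of $g\in\ker\d$, using transitivity to rewrite a length-two factor $x_ix_j^{-1}$ as a commutator $hx_jh^{-1}x_j^{-1}$; the abelianization route is essentially a repackaging of this that isolates precisely where indecomposability is used.
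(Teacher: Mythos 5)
Your proof is correct; part (1) coincides with the paper's argument (normality of $\ker\d$, trivial intersection by comparing degrees, and generation from $x=(xx_0^{-1})x_0$), but in part (2) you take a genuinely different route. The paper proves $\ker\d\subseteq N_X$ by induction on the length of a reduced expression of $g\in\ker\d$: indecomposability supplies $h\in G_X$ with $h\cdot x_j=x_i$, so a degree-zero pair $x_ix_j^{-1}$ is the commutator $hx_jh^{-1}x_j^{-1}$, and normality of $N_X$ absorbs conjugation in the inductive step $g=(g_1x_ix_j^{-1}g_1^{-1})(g_1g_2)$. You instead abelianize: the relation $xy=(x\trid y)x$ gives $\bar y=\overline{x\trid y}$ in $(G_X)_{\ab}$, so $x\mapsto\bar x$ is $\Inn(X)$-invariant, transitivity makes all generator classes equal to a single class $t$, hence $(G_X)_{\ab}$ is cyclic and $\d$ induces a surjection $(G_X)_{\ab}\to\Z$ with $t\mapsto1$; since a finite cyclic group cannot surject onto the torsion-free group $\Z$, and a surjective endomorphism of $\Z$ is injective, this map is an isomorphism, whence $\ker\d=\ker\bigl(G_X\to(G_X)_{\ab}\bigr)=N_X$. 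This is sound, and it buys two things: it sidesteps the combinatorics of reduced words entirely (the paper's inductive step tacitly requires every degree-zero reduced word to contain, up to inversion, an adjacent pair of the form $x_ix_j^{-1}$, a point it leaves unexamined), and it records the useful byproduct $(G_X)_{\ab}\simeq\Z$ for indecomposable quandles; the paper's induction, in exchange, is elementary, stays inside $G_X$ without passing to quotients, and exhibits explicit commutator decompositions of elements of $\ker\d$. One small quibble: part (1) in fact uses neither finiteness nor indecomposability, only the existence of $\d$, rather than ``only finiteness'' as you remark.
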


\begin{cor}
	\label{cor:semidirecto}
	The restriction of the action of $G_X$ on $X$ to $N_X$ is transitive.

	\begin{proof}
		Let $x,y\in X$ and let $g\in G_X$ such that $g\cdot x=y$ and let
		$\ell=\d(g)$.  Then $g'=y^{-\ell}g\in N_X$ by Lemma \ref{lem:semidirecto} and
		$g'\cdot x=y$.
	\end{proof}
\end{cor}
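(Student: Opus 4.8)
The plan is to bootstrap from the transitivity of the full enveloping group $G_X$ and then correct the degree of a transporting element using the idempotency of the quandle operation. Since $X$ is indecomposable, $G_X$ acts transitively on $X$, so given $x,y\in X$ I can find $g\in G_X$ with $g\cdot x=y$. This $g$ need not lie in $N_X$; but by Lemma~\ref{lem:semidirecto}(2) membership in $N_X=\ker\d$ is detected by the degree homomorphism $\d$, so the task reduces to replacing $g$ by a degree-zero element that still carries $x$ to $y$.

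First I would set $\ell=\d(g)$ and consider the candidate $g'=y^{-\ell}g$. Since every generator has degree one, $\d(y)=1$, and hence $\d(g')=-\ell+\ell=0$, which places $g'$ in $\ker\d=N_X$. The remaining point is to verify that this correction does not spoil the image of $x$. I would invoke the quandle axiom $y\trid y=y$, which says precisely that $\varphi_y$ fixes $y$, so $\varphi_y^{-\ell}$ fixes $y$ as well, i.e. $y^{-\ell}\cdot y=y$. Combining these, $g'\cdot x=y^{-\ell}\cdot(g\cdot x)=y^{-\ell}\cdot y=y$, and thus $g'\in N_X$ transports $x$ to $y$, establishing transitivity of the $N_X$-action.

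The only subtle step is the degree correction: one must choose the correcting factor so that it both cancels $\d(g)$ and fixes the target $y$. Using a power of $y$ itself accomplishes both at once, precisely because $y$ is idempotent for $\trid$. I expect this to be the crux of the argument, although it is ultimately a one-line observation once idempotency is exploited; the transitivity of $G_X$ and the identification $N_X=\ker\d$ supplied by Lemma~\ref{lem:semidirecto} carry out all of the structural work.
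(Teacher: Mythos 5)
Your proposal is correct and is essentially identical to the paper's own proof: you use the very same correcting element $g'=y^{-\ell}g$, the identification $N_X=\ker\d$ from Lemma~\ref{lem:semidirecto}, and the fact that $y^{-\ell}$ fixes $y$. The only difference is that you spell out the degree computation and the appeal to the quandle axiom $y\trid y=y$, which the paper leaves implicit.
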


\subsection{Rack cohomology}

A cohomology theory for racks was introduced in~\cite{MR1364012} and
independently in~\cite{MR1800709}. A cohomology theory for quandles was
developed in~\cite{MR1990571}. These theories were further developed and
generalized for example in \cite{MR1994219} and~\cite{MR3081627}.

We briefly recall these cohomology theories next. Let $X$ be a rack and let $M$ be a right $G_X$-module. 
Set $C^n=C^n(X,M)=\Fun(X^n,M)$, $n\geq 0$, the set of functions from $X^n$ to $M$. 
Consider the differential $d:C^n\to C^{n+1}$
\begin{multline*}
df(x_1,\dots, x_{n+1})=\sum_{i=1}^{n}(-1)^{i-1}\Big(f(x_1,\dots, x_{i-1},x_{i+1},\dots, x_{n+1})\\
-f(x_1,\dots, x_{i-1},x_i\trid x_{i+1},\dots, x_i\trid x_{n+1})\cdot x_i\Big).
\end{multline*}
The \emph{rack cohomology} $H^\bullet(X,M)$ of $X$ with coefficients in $M$ is
the cohomology of the complex $(C^\bullet,d)$ \cite[Definition 2.3]{MR1948837}.
The groups of cocycles resp. cobordisms, are denoted by $Z^n(X,M)$, resp.
$B^n(G,M)$.  When $A$ is an abelian group and no reference to a $G_X$-action on
$A$ is specified, $H^\bullet(X,A)$ stands for the cohomology of $X$ with values in the
trivial module $M=A$. If $q$ is a class in $H^2(X,A)$, we set $q_{x,y}\coloneqq
q(x,y)$. Hence $q\in H^2(X,A)$ if and only if 
\begin{align}\label{eqn:cocycle-condition}
q_{x\trid y,x\trid z}q_{x,z}=q_{x,y\trid z}q_{y,z}, \qquad \forall\,x,y,z\in X
\end{align}
and two classes $q,q'\in H^2(X,A)$ are equivalent if and only if there exists
$\gamma:X\to A$ such that $q'_{x,y}=q_{x,y}\gamma(x\trid y)\gamma(y)^{-1}$ for
all $x,y\in X$.

The \emph{rack homology} $H_\bullet(X,A)$ with values in an abelian group $A$ is
defined analogously, by considering the free abelian group $F_n(X)$ on $X^n$,
$n\geq 0$, and setting $C_n(X,A)\coloneqq F_n(X)\ot A$. If $X$ is a quandle,
then the subgroup $F_n^D(X)\leq F_n(X)$ generated by $n$-tuples $(x_1,\dots
x_n)$ with $x_{i}=x_{i+1}$ for some $i$, defines a subcomplex
$C_\bullet^D=C_\bullet^D(X,A)$ of $C_\bullet$. The \emph{quandle homology}
$H_\bullet^Q(X,A)$ of $X$ is the homology of the quotient complex
$C_\bullet^Q=(C_n/C_n^D)_{n\geq 0}$.

%

\medbreak
In this work we give a description of the group $H^2(X,A)$ of 2-cocycles on $X$ 
with values in an abelian group $A$, which allows us to compute cocycles explicitly. 
We recall next some identifications between the (co)homology theories described above that will be useful for our goal.

\begin{lem}{\cite[Proposition 3.4]{MR1812049}}  
	\label{lem:UCT}
	$H^2(X,A)\simeq\Hom(H_2(X,\Z),A)$, via 
\[
H^2(X,A)\ni q\mapsto
\left([x,y]\mapsto q_{x,y}\right)\in \Hom(H_2(X,\Z),A).
\]
\end{lem}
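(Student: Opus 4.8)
The statement is precisely the degree-two Universal Coefficient Theorem for the rack (co)homology of $X$, so the plan is to reduce it to the standard homological-algebra version of that theorem and then verify that the obstruction term vanishes. Recall that the cochain complex $C^\bullet(X,A)=\Fun(X^\bullet,A)$ is, by construction, the complex $\Hom(C_\bullet(X,\Z),A)$ obtained by dualizing the chain complex $C_\bullet(X,\Z)$ whose $n$-th term is the free abelian group $F_n(X)=\Z[X^n]$. Since each $C_n(X,\Z)$ is free abelian, the hypotheses of the Universal Coefficient Theorem are met, and I would invoke it to obtain, for $n=2$, the natural short exact sequence
\begin{equation*}
0\longrightarrow \Ext^1(H_1(X,\Z),A)\longrightarrow H^2(X,A)\xrightarrow{\ \rho\ }\Hom(H_2(X,\Z),A)\longrightarrow 0,
\end{equation*}
in which $\rho$ is the evaluation map $q\mapsto\bigl([z]\mapsto\langle q,z\rangle\bigr)$.

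The crux is then to show that the left-hand term vanishes, which will follow once I establish that $H_1(X,\Z)$ is free abelian. For this I would compute $H_1$ directly from the low-degree part of the complex. Dualizing the differential displayed above, one has $\partial_1=0$ (the degree-zero cohomology differential is the empty sum, hence zero) and $\partial_2\colon \Z[X\times X]\to\Z[X]$, $(x,y)\mapsto y-(x\trid y)$. Consequently
\begin{equation*}
H_1(X,\Z)=\Z[X]\big/\lg\, y-(x\trid y):x,y\in X\,\rg .
\end{equation*}
The relations identify any two elements of $X$ lying in the same $\Inn(X)$-orbit and impose nothing between distinct orbits, so this quotient is canonically $\Z[\Orb(X)]$, the free abelian group on the set of orbits. (In the indecomposable case of interest this is simply $\Z$.) Being free, $H_1(X,\Z)$ satisfies $\Ext^1(H_1(X,\Z),A)=0$ for every abelian group $A$, and the sequence above collapses to the isomorphism $\rho\colon H^2(X,A)\xrightarrow{\sim}\Hom(H_2(X,\Z),A)$.

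It remains to identify $\rho$ with the map in the statement. Tracing the evaluation pairing, a homology class represented by a $2$-cycle $[x,y]$ is sent by $\rho(q)$ to the value of the cocycle $q$ on the corresponding chain, namely $q_{x,y}$; this is exactly the assignment $q\mapsto\bigl([x,y]\mapsto q_{x,y}\bigr)$. The only genuinely non-formal step is the computation of $H_1(X,\Z)$ together with the observation that it is free abelian; everything else is the standard Universal Coefficient machinery applied to a complex of free abelian groups. I therefore expect no serious obstacle, the main point being simply to make the identification $C^\bullet(X,A)=\Hom(C_\bullet(X,\Z),A)$ explicit so that the general theorem applies verbatim.
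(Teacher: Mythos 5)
Your proposal is correct and matches the intended justification: the paper itself gives no proof, citing the result as \cite[Proposition 3.4]{MR1812049}, and your argument --- identifying $C^\bullet(X,A)=\Hom(C_\bullet(X,\Z),A)$ with $C_\bullet(X,\Z)$ free abelian, invoking the Universal Coefficient Theorem, and killing the $\Ext^1$ term via the computation $H_1(X,\Z)\simeq\Z[\Orb(X)]$ (free abelian, indeed $\simeq\Z$ in the indecomposable case) --- is exactly the standard argument underlying that citation. Your identification of the UCT map with evaluation, $q\mapsto\bigl([x,y]\mapsto q_{x,y}\bigr)$, is also the correct reading of the statement.
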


The following is a particular case of \cite[Theorem 7]{MR1952425}.
\begin{lem}
	\label{lem:H2}
	Assume $X$ is an indecomposable quandle. Then 
	\[
	H_2(X,\Z)\simeq H_2^Q(X,\Z)\times \Z.
	\]
	Explicitly, if $(x,y)\in X^2$, then the isomorphism is induced by the map 
	\begin{align*}
	(x,y)\mapsto \begin{cases}
	(x,y)\times 0, & \text{ if }x\neq y\\
	0\times 1, & \text{ if }x= y.
	\end{cases}
	\end{align*}
\end{lem}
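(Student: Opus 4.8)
The plan is to pass to homology through the short exact sequence of chain complexes
\[
0\to C_\bullet^D\to C_\bullet\to C_\bullet^Q\to 0
\]
that defines the quandle homology, and to read off $H_2$ from the associated long exact sequence together with a computation of the degenerate homology $H_\bullet^D\coloneqq H_\bullet(C_\bullet^D)$. Since there is no valid index for a degenerate $1$-tuple we have $F_1^D(X)=0$, hence $C_1^D=0$ and $H_1^D(X,\Z)=0$. The long exact sequence then contains the exact segment
\[
H_3^Q(X,\Z)\xrightarrow{\ \delta\ }H_2^D(X,\Z)\xrightarrow{\ \iota_*\ }H_2(X,\Z)\xrightarrow{\ p_*\ }H_2^Q(X,\Z)\to 0,
\]
so it suffices to compute $H_2^D$, to show that $\iota_*$ is injective, and to split the resulting short exact sequence.

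First I would compute $H_2^D(X,\Z)$. The group $C_2^D$ is free abelian on the diagonal tuples $\{(x,x):x\in X\}$, and since $x\trid x=x$ the boundary dual to the differential of \S\ref{sec:racks} gives $\partial_2(x,x)=(x)-(x\trid x)=0$, so every diagonal tuple is a cycle. On degenerate $3$-chains the same boundary evaluates, again by the quandle axiom, as
\[
\partial_3(x,x,y)=0,\qquad \partial_3(x,y,y)=(y,y)-(x\trid y,\,x\trid y).
\]
Thus the image of $\partial_3$ on $C_3^D$ is spanned by the differences $(y,y)-(x\trid y,x\trid y)$, which identify two diagonal classes exactly when the corresponding points lie in the same $\Inn(X)$-orbit. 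As $X$ is indecomposable there is a single orbit, so all classes $[(x,x)]$ coincide and $H_2^D(X,\Z)\cong\Z$, freely generated by $[(x_0,x_0)]$.

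The main obstacle is to promote this to the full complex: a priori the connecting map $\delta$ could kill part of $H_2^D$, and even once $\iota_*$ is injective one must split the extension $0\to\Z\to H_2(X,\Z)\to H_2^Q(X,\Z)\to 0$. I would resist a naive chain-level retraction counting diagonal coefficients, since the obvious functional sending $(x,y)$ to $1$ if $x=y$ and to $0$ otherwise fails to vanish on $\partial_3 C_3$ (the two non-diagonal terms of $\partial_3(x_1,x_2,x_3)$ do not cancel under this count) and hence does not descend to $H_2$. Instead I would invoke the splitting of the defining sequence of complexes established by Litherland and Nelson \cite{MR1952425}, of which the present statement is the degree-$2$ instance: their result furnishes a chain-map section of $C_\bullet\to C_\bullet^Q$, which forces $\delta=0$, makes $\iota_*$ injective, and yields a natural direct sum decomposition $H_2(X,\Z)\cong H_2^Q(X,\Z)\oplus H_2^D(X,\Z)$.

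Finally I would match this decomposition with the explicit map in the statement. Under the projection $p_*$ a tuple $(x,y)$ with $x\neq y$ maps to its quandle class $[x,y]\in H_2^Q(X,\Z)$ and has trivial degenerate component, while a diagonal tuple $(x,x)$ maps to $0$ in $H_2^Q(X,\Z)$ and to the generator $1\in\Z\cong H_2^D(X,\Z)$ computed above. This is precisely the assignment $(x,y)\mapsto (x,y)\times 0$ for $x\neq y$ and $(x,x)\mapsto 0\times 1$, so the induced map is the claimed isomorphism.
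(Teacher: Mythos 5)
Your route coincides with the paper's at the only point where real input is needed: the paper's entire proof of this lemma is the citation of \cite[Theorem 7]{MR1952425}, and you too delegate the chain-level splitting of $0\to C_\bullet^D\to C_\bullet\to C_\bullet^Q\to 0$ to that reference. What you add on top of the citation --- the long exact sequence, $C_1^D=0$, and the computation $H_2^D(X,\Z)\simeq\Z$ from indecomposability via $\partial_3(x,x,y)=0$ and $\partial_3(x,y,y)=(y,y)-(x\trid y,x\trid y)$ --- is correct, and it is a genuine unpacking of why the degree-$2$ case of Litherland--Nelson takes the stated form. Up to the abstract decomposition $H_2(X,\Z)\simeq H_2^Q(X,\Z)\oplus H_2^D(X,\Z)$ your argument is sound.

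The gap is your final identification paragraph, and your own earlier observation is what exposes it. If the isomorphism were induced by the displayed assignment (quandle class in the first coordinate, diagonal count in the second), then its $\Z$-coordinate, evaluated on cycles, would be exactly the diagonal-counting functional $\epsilon$; but you proved that $\epsilon$ does not vanish on $\partial_3C_3$ (indeed $\epsilon(\partial_3(x,y,x))=[x=y\trid x]-1$, which is $-1$ whenever $y\trid x\neq x$), so $\epsilon$ does not descend to $H_2$, and hence no choice of section in \cite{MR1952425} can make that literal recipe well defined: your two claims are mutually inconsistent. Moreover the assertion that $(x,y)$ with $x\neq y$ ``has trivial degenerate component'' is not meaningful as stated, since $(x,y)$ is not a cycle; interpreted through the retraction $\id-s\circ p$ it depends on the section $s$, which you never pin down. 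The repair is to replace the diagonal count by the total augmentation $\Sigma\big(\sum_i a_i(x_i,y_i)\big)=\sum_i a_i$: since $\partial_3$ has four terms with signs $+,-,-,+$, one has $\Sigma\circ\partial_3=0$, so $(p_*,\Sigma)\colon H_2(X,\Z)\to H_2^Q(X,\Z)\times\Z$ is well defined; it sends every diagonal class $[(x,x)]$ to $(0,1)$; it is surjective because any quandle cycle lifts naively to a rack cycle (using $C_1^D=0$) whose $\Z$-coordinate can be adjusted by multiples of $(x_0,x_0)$; and it is injective because $p_*[z]=0$ forces $z$ to be homologous to an element of $C_2^D$, hence to $\Sigma(z)\,(x_0,x_0)=0$, all diagonals being homologous by indecomposability. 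This argument is self-contained --- it needs no appeal to \cite{MR1952425} at all --- and it shows that the explicit formula should send a non-diagonal $(x,y)$ to $(x,y)\times 1$ rather than $(x,y)\times 0$; that correction is harmless for the isomorphism statement and is in fact what makes the transport of cocycles in Theorem~\ref{thm:eisermann} come out cleanly.
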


%
%
%
%
%
%
%
%

Etingof and Gra\~na found a deep relation between group cohomology and rack
cohomology. 

\begin{thm}{\cite[Corollary 5.4]{MR1948837}}
	\label{thm:EG}
	Let $X$ be a finite indecomposable rack and $A$ an abelian group with a
	trivial $G_X$-action. Then 
	\begin{equation*}
		H^1(G_X,\Fun(X,A))\simeq H^2(X,A).
	\end{equation*}
	This equivalence is given as follows:
	\begin{enumerate}
		\item If $f\in H^1(G,\Fun(X,A))$ then a
		2-cocycle $q^f\in H^2(X,A)$ arises as 
		\[
		q^f_{x,y}=f(x)(y), \qquad x,y\in X.
		\]
			\item 	Conversely, $q\in H^2(X,A)$ determines $f_q\in  H^1(G,\Fun(X,A))$ by extending
		$q$ recursively via 
		\begin{align}\label{eqn:fq}
		f_q(xy)(z)=q_{x,y\trid z}+q_{y,z}, \quad x,y,z\in X. 
		\end{align}
	\end{enumerate} 
\end{thm}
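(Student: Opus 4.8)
The plan is to establish the two directions of the correspondence between $H^1(G_X,\Fun(X,A))$ and $H^2(X,A)$ by direct verification of the defining conditions on each side, then check that the two maps are mutually inverse. I will freely use that $A$ carries a trivial $G_X$-action while $M=\Fun(X,A)$ carries the action $(f\cdot x)(y)=f(x\trid y)$, so that a $1$-cocycle $f\in Z^1(G_X,M)$ is a map $f\colon G_X\to M$ satisfying the twisted condition $f(gh)=f(g)\cdot h+f(h)$, i.e. $f(gh)(z)=f(g)(h\trid z)+f(h)(z)$ for all $z\in X$.

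\medbreak
First I would treat direction (1). Given $f\in Z^1(G_X,M)$, set $q^f_{x,y}=f(x)(y)$ for $x,y\in X$. I would verify the quandle $2$-cocycle condition \eqref{eqn:cocycle-condition} (written additively, since $A$ is abelian) by applying the cocycle identity for $f$ to the defining relation $xy=(x\trid y)x$ of $G_X$ from \eqref{eq:G_X}. Concretely, evaluating $f(xy)(z)$ and $f((x\trid y)x)(z)$ using $f(gh)(z)=f(g)(h\trid z)+f(h)(z)$ and equating the two expressions yields $f(x)(y\trid z)+f(y)(z)=f(x\trid y)(x\trid z)+f(x)(z)$, which is exactly \eqref{eqn:cocycle-condition} after substituting $q^f_{x,y}=f(x)(y)$. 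I would then check that a $1$-coboundary $f(g)=m\cdot g-m$ produces a $2$-coboundary, so that the map descends to cohomology; this is the computation that $m\cdot x-m$ evaluated at $y$ gives $m(x\trid y)-m(y)$, matching the coboundary relation $q'_{x,y}=q_{x,y}+\gamma(x\trid y)-\gamma(y)$ with $\gamma=m$.

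\medbreak
Next I would treat direction (2). Here the point is that $f_q$ must be \emph{defined} on all of $G_X$, not merely on the generators $X$, via the recursion \eqref{eqn:fq}, namely $f_q(xy)(z)=q_{x,y\trid z}+q_{y,z}$. The key obstacle, and the step I expect to require the most care, is \textbf{well-definedness}: since the generators $x\in X$ satisfy the relations $xy=(x\trid y)x$ in $G_X$, one must check that the recursively-defined value $f_q$ is independent of the chosen word representing a group element. This amounts to verifying that the value prescribed by \eqref{eqn:fq} is compatible with the defining relations, and this compatibility is precisely guaranteed by the cocycle condition \eqref{eqn:cocycle-condition} on $q$ — so the same identity that made direction (1) work here plays the role of a consistency check for the recursion. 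I would also note that the generators are invertible in $G_X$, so the recursion must be extended to negative powers, and one checks compatibility there by requiring $f_q(x^{-1})(z)=-f_q(x)(x^{-1}\trid z)$, consistent with the cocycle axiom at the identity $f_q(1)=0$.

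\medbreak
Finally I would check that the two assignments are mutually inverse. In one direction, starting from $q\in H^2(X,A)$, forming $f_q$, and then restricting via $q^{f_q}_{x,y}=f_q(x)(y)$, I must recover $q_{x,y}$; this is immediate from \eqref{eqn:fq} specialized to the single generator case together with the normalization coming from $x\trid x=x$ in a quandle. In the other direction, starting from $f\in Z^1(G_X,M)$, forming $q^f$, and then reconstructing $f_{q^f}$ via the recursion, I must recover $f$ on all of $G_X$; since both $f$ and $f_{q^f}$ are $1$-cocycles agreeing on the generating set $X$, and a $1$-cocycle on a group is determined by its values on generators, they coincide. The bulk of the work is thus the well-definedness in direction (2); the remaining verifications are routine applications of \eqref{eqn:cocycle-condition} and the cocycle identity for $f$.
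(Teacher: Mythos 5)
The paper itself contains no proof of Theorem~\ref{thm:EG}: it is quoted from Etingof--Gra\~na \cite[Corollary 5.4]{MR1948837} and used as a black box, so there is no internal argument to compare yours against. Judged on its own, your proof is essentially correct and is the natural presentation-theoretic argument. Direction (1) is exactly right: applying the $1$-cocycle identity $f(gh)=f(g)\cdot h+f(h)$ to the defining relation $xy=(x\trid y)x$ of \eqref{eq:G_X} yields \eqref{eqn:cocycle-condition}, and a coboundary $f(g)=m\cdot g-m$ produces the rack coboundary with $\gamma=m$. For direction (2) you correctly isolate well-definedness as the crux and correctly assert that compatibility with the defining relations is precisely \eqref{eqn:cocycle-condition}. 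To make ``checking the defining relations suffices'' airtight, the standard device is to define, on the free group on $X$, the homomorphism $x\mapsto(q_{x,-},x)$ with values in the semidirect product $\Fun(X,A)\rtimes G_X$, observe that the relators $xyx^{-1}(x\trid y)^{-1}$ are killed exactly because of \eqref{eqn:cocycle-condition}, and read off $f_q$ from the induced section $g\mapsto(f_q(g),g)$ of $\Fun(X,A)\rtimes G_X\to G_X$; this handles inverses and words of arbitrary length in one stroke, with no induction. Your mutual-inverse step via ``a $1$-cocycle is determined by its values on generators'' is correct. Note also that your argument never uses finiteness or indecomposability of $X$, which is fine: the presentation-theoretic proof does not need them.

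Two small repairs. First, to obtain an isomorphism on cohomology (rather than merely a bijection on cocycles that descends in one direction) you must also check the converse coboundary implication: if $q^f_{x,y}=\gamma(x\trid y)-\gamma(y)$ for some $\gamma\in\Fun(X,A)$, then $f$ and the group coboundary $g\mapsto\gamma\cdot g-\gamma$ are $1$-cocycles agreeing on $X$, hence equal, so $f\in B^1(G_X,M)$; without this you only get a well-defined surjection $H^1(G_X,\Fun(X,A))\to H^2(X,A)$. Your generators argument gives this in one line, but it needs to be said. Second, your appeal to the quandle axiom $x\trid x=x$ when verifying $q^{f_q}=q$ is both unavailable (Theorem~\ref{thm:EG} is stated for racks, not quandles) and unnecessary: on generators $f_q(x)=q_{x,-}$ by construction, so $q^{f_q}_{x,y}=q_{x,y}$ holds by definition.
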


\begin{rem}
Let $G$ be a (non-abelian) group and fix $Z^2(X,G)\subset \Fun(X^2,G)$ as the subset of all $q:X^2\to G$ satisfying \eqref{eqn:cocycle-condition}.
We say that $q$ is equivalent to $q'$, and we write $q\sim q'$, in $Z^2(X,G)$ if and only if there is $\gamma\in \Fun(X,G)$ such that $q'_{x,y}=\gamma(x\trid y)q_{x,y}\gamma(y)^{-1}$. 
If $H^2(X,G)\coloneqq Z^2(X,G)/\sim$, then Theorem \ref{thm:EG} holds, see \cite[Remark 5.6]{MR1948837}.
\end{rem}

 \subsection{Group cohomology}

\label{subsection:cohomology}

Let $G$ be a group, $N\vartriangleleft G$ a normal subgroup and $M$ a right
$G$-module. Recall \emph{cf.} \cite[3.8]{MR1324339} that there is a right $G/N$-action on $H^1(N,M)$, induced by 
\begin{align}\label{eqn:action-invariants}
(f\cdot g)(n)= f(gng^{-1})\cdot g, \quad g\in G, n\in N, f\in H^1(N,M).
\end{align}
Indeed, let $f\in Z^1(N,M)$. If $g\in N$, then \[(f\cdot g)(n)=f(gn)-f(g)=f(g)\cdot n+f(n)-f(g)\] by the cocycle condition. Hence
\begin{align*}
(f\cdot g)(n)-f(n)&=f(g)\cdot n-f(g)
\end{align*}
and thus $f\cdot g=f\in H^1(N,M)$. The {\it inflation-restriction} sequence is 
\begin{multline}\label{eqn:infl-restr} 
	0\to
	H^1(G/N,M^N)\overset{\iota}{\to}H^1(G,M)\overset{r}{\to}H^1(N,M)^{G/N}\\
	\to H^2(G/N,M^N)\to H^2(G,M) 
\end{multline} 
where the {\it inflation map} $\iota(h)$, $h\in H^1(G/N,M^N)$, is the
composition $$G\twoheadrightarrow G/N\overset{h}{\to}M^N\hookrightarrow M$$ and
the {\it restriction map} $r(g)$, $g\in H^1(G,M)$, is the composition
$$N\hookrightarrow G\overset{g}{\to}M.$$

In the case where $G/N\simeq \Z$ one obtains the following result, see \emph{loc.cit.} 

\begin{lem}
	\label{lem:Brown}
	Assume that $G/N\simeq\Z$. Then 
	\begin{enumerate}
		\item\label{item:H1} $H^2(G/N,M^N)=0$.
		\item\label{item:H2} $H^1(G/N,M^N)=M^N/\langle m\cdot g - m\rangle$,
			(class of) $f\mapsto \text{ (class of) }f(1)$.
	\end{enumerate}
	In particular, the exact sequence \eqref{eqn:infl-restr} reduces to 
	\begin{align}\label{eqn:sec}
		0\to H^1(G/N,M^N)\overset{\inf}{\to}H^1(G,M)\overset{\res}{\to}H^1(N,M)^{G/N}\to 0.
	\end{align}
\end{lem}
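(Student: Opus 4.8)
The plan is to prove Lemma~\ref{lem:Brown} by invoking the general structure of the cohomology of $\Z$ acting on a module, and then feeding the two vanishing/computation results into the inflation--restriction sequence \eqref{eqn:infl-restr}. The key observation is that $\Z$ has cohomological dimension $1$: a free resolution of $\Z$ as a trivial $\Z[\Z]$-module is given by $0\to \Z[\Z]\xrightarrow{t-1}\Z[\Z]\to\Z\to 0$, where $t$ is a fixed generator of $\Z$. Tensoring (or rather applying $\hom$) against the module $M^N$ shows at once that $H^n(\Z,M^N)=0$ for all $n\geq 2$, which gives item~\eqref{item:H1}. The group $G/N\simeq\Z$ here, so the generator $t$ can be taken to be the image of $x_0$, acting on $M^N$ by the induced $G/N$-action.

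First I would make item~\eqref{item:H2} explicit. From the same two-term resolution, $H^1(\Z,M^N)$ is the cokernel of the map $M^N\xrightarrow{t-1}M^N$, $m\mapsto m\cdot g-m$ (writing $g$ for a preimage of the generator $t$), which is exactly $M^N/\langle m\cdot g-m\rangle$. The identification with a $1$-cocycle $f$ proceeds through the standard dictionary between the bar resolution and the small resolution: a $1$-cocycle $f\in Z^1(\Z,M^N)$ is determined by its value $f(1)$ on the generator, and under this correspondence the class of $f$ maps to the class of $f(1)$ in the cokernel. I would check that this map is well defined on cohomology (coboundaries $f(n)=m\cdot n-m$ send $f(1)=m\cdot g-m$ into the subgroup $\langle m\cdot g-m\rangle$) and that it is bijective, which is immediate once the resolutions are compared.

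Finally I would assemble the reduced sequence \eqref{eqn:sec}. Substituting $H^2(G/N,M^N)=0$ from item~\eqref{item:H1} into \eqref{eqn:infl-restr} collapses the long exact sequence: the term $H^2(G/N,M^N)$ vanishes, so the map $r$ becomes surjective onto $H^1(N,M)^{G/N}$ and the five-term sequence truncates to the short exact sequence
\[
0\to H^1(G/N,M^N)\xrightarrow{\inf} H^1(G,M)\xrightarrow{\res} H^1(N,M)^{G/N}\to 0,
\]
which is precisely \eqref{eqn:sec}. I expect the main (though modest) obstacle to be bookkeeping rather than conceptual: one must be careful that the $G/N$-action on $H^1(N,M)$ used in the inflation--restriction sequence is the one recorded in \eqref{eqn:action-invariants}, and that the generator of $\Z$ and its action on $M^N$ are pinned down consistently, so that the cokernel description in item~\eqref{item:H2} matches the notation $\langle m\cdot g-m\rangle$ used in the statement. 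Since all of this is standard homological algebra for $\Z$-cohomology, the verification is routine once the small resolution is in hand.
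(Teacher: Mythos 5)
Your proof is correct: the two-term free resolution $0\to\Z[\Z]\xrightarrow{t-1}\Z[\Z]\to\Z\to 0$ simultaneously gives $H^n(\Z,M^N)=0$ for $n\geq 2$ and the identification $H^1(\Z,M^N)\simeq M^N/\langle m\cdot g-m\rangle$ via $f\mapsto f(1)$, and substituting the vanishing of $H^2(G/N,M^N)$ into the five-term sequence \eqref{eqn:infl-restr} yields \eqref{eqn:sec}. The paper itself gives no proof of this lemma, quoting it from Brown's book (\emph{loc.\ cit.}), and your argument is exactly the standard one underlying that citation, including the bookkeeping that matches the bar-resolution cocycles with the cokernel description.
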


\begin{lem}
	\label{lem:split}
	Assume that $N$ is finite and $G/N\simeq\Z$. Then~\eqref{eqn:sec} splits. 
	A retraction for  $\inf:H^1(G/N,M^N)\to H^1(G,M)$ is given by
	\begin{align*}
		&j:H^1(G,M)\to H^1(G/N,M^N), && 
		j(f)(\ell)=\frac{1}{|N|}\sum_{n\in N}\left(f(nx_0^\ell)-f(n)\right).
	\end{align*}
\end{lem}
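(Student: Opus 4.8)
The plan is to verify by hand that the displayed formula defines a group homomorphism $j\colon H^1(G,M)\to H^1(G/N,M^N)$ with $j\circ\inf=\id$. Since \eqref{eqn:sec} is a short exact sequence of abelian groups, exhibiting such a left inverse for $\inf$ is exactly what it means for the sequence to split, so this is all that is required. Throughout I fix $x_0\in G$ mapping to a generator of $G/N\simeq\Z$, so that every element of $G$ is uniquely of the form $nx_0^\ell$ with $n\in N$, $\ell\in\Z$, and I view $\Z\simeq G/N$ as acting on $M^N$ by $m\cdot\ell=m\cdot x_0^\ell$ (well defined since $N$ is normal, hence $M^N$ is stable under $x_0$). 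I will use the cocycle identity $f(gh)=f(g)\cdot h+f(h)$ repeatedly in the rearranged form $f(g)\cdot h=f(gh)-f(h)$.

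The first point is that $j(f)(\ell)\in M^N$ for $f\in Z^1(G,M)$. Fixing $m\in N$ and applying the cocycle identity twice gives $\big(f(nx_0^\ell)-f(n)\big)\cdot m=f(nx_0^\ell m)-f(nm)$. Setting $m'=x_0^\ell mx_0^{-\ell}\in N$ (this is where normality of $N$ is used) one has $nx_0^\ell m=(nm')x_0^\ell$, and as $n$ ranges over $N$ so do $nm'$ and $nm$; re-indexing both sums then yields $j(f)(\ell)\cdot m=j(f)(\ell)$. This re-indexing is the computational heart of the argument. An entirely analogous telescoping computation, based on $f(nx_0^\ell)\cdot x_0^{\ell'}=f(nx_0^{\ell+\ell'})-f(x_0^{\ell'})$, shows $j(f)(\ell+\ell')=j(f)(\ell)\cdot x_0^{\ell'}+j(f)(\ell')$, so that $j(f)\in Z^1(\Z,M^N)$.

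Next I would check that $j$ descends to cohomology. Linearity in $f$ is immediate from the formula. If $f=\partial m_0$ is the coboundary $f(g)=m_0\cdot g-m_0$, the constant terms cancel and one computes $j(f)(\ell)=\bar m_0\cdot x_0^\ell-\bar m_0$, where $\bar m_0=\tfrac{1}{|N|}\sum_{n\in N}m_0\cdot n\in M^N$; that is, $j(\partial m_0)=\partial_{\Z}\bar m_0$ is a coboundary in $Z^1(\Z,M^N)$. Hence $j$ is a well-defined homomorphism on $H^1$. Finally $j\circ\inf=\id$ follows by direct evaluation: since $\inf(h)$ is the composite $G\twoheadrightarrow G/N\xrightarrow{h}M^N\hookrightarrow M$ and $nx_0^\ell$ maps to $\ell\in\Z\simeq G/N$, we have $\inf(h)(nx_0^\ell)=h(\ell)$, so every summand of $j(\inf(h))(\ell)$ equals $h(\ell)-h(0)=h(\ell)$ (using $h(0)=0$), and averaging over the $|N|$ terms returns $h(\ell)$.

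The point to be careful about is the division by $|N|$: for $j$ to take values in $M^N$ and for the averaging operator $m_0\mapsto\bar m_0$ to be a genuine projection $M\to M^N$, one needs $|N|$ to act invertibly on $M$ (equivalently $M$ uniquely $|N|$-divisible), which is exactly why $N$ is assumed finite. Everywhere else the proof uses only the cocycle identity, the normality of $N$, and the elementary reindexing of sums over $N$; none of the steps is deep, and the whole content is the verification that the averaging formula respects all the structure in sight.
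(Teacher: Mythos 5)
Your proof is correct and follows essentially the same route as the paper's: the same four verifications, namely that $j(f)(\ell)\in M^N$ (via the cocycle identity, conjugating $n$ past $x_0^\ell$ and reindexing the sum over $N$), that $j(f)$ satisfies the cocycle condition, that coboundaries go to coboundaries via the averaged element $\bar m_0=\tfrac{1}{|N|}\sum_{n\in N}m_0\cdot n\in M^N$, and that $j\circ\inf=\id$ by direct evaluation. Your closing caveat that the formula implicitly requires $|N|$ to act invertibly on $M$ is a fair observation, but it is a hypothesis the paper's own proof leaves equally implicit, so it does not distinguish the two arguments.
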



\begin{proof}
	We need to check that $j$ is well-defined, that is:
	\begin{enumerate}
		\item If $f\in Z^1(G,M)$, then $j(f)(G/N)\subseteq M^N$.
		\item If $f\in Z^1(G,M)$, then $j(f)\in Z^1(G/N,M^N)$.
		\item If $f\in B^1(G,M)$, then $j(f)\in B^1(G/N,M^N)$.
	\end{enumerate}
	Let $f\in Z^1(G,M)$ and set $\varphi\coloneqq j(f)$.
	For (1), using the cocycle condition, 
	\begin{align*}
		\varphi(\ell)\cdot n&=\frac{1}{|N|}\sum_{m\in N}\left(f(mx_0^\ell)\cdot n
		-f(m)\cdot n\right)\\
		&=\frac{1}{|N|}\sum_{m\in N}\left(f(mx_0^\ell n)-f(n)-f(mn)+f(n)\right)\\
		&=\frac{1}{|N|}\sum_{m\in N}\left(f(mx_0^\ell 
		nx_0^{-\ell}x_0^\ell)-f(mn)\right). 
	\end{align*}
	By reordering the sum, $\varphi(\ell)\cdot n=\varphi(\ell)$ for all 
	$n\in 	N$, $\ell\in\Z$. Hence (1) holds. 

	In (2), we get
	\begin{align*}
		\varphi(\ell+r)&=\frac{1}{|N|}\sum_{n\in N}\left(f(nx_0^{\ell+r})-f(n)\right)\\
		&=\frac{1}{|N|}\sum_{n\in N}\left(f(nx_0^\ell)\cdot x_0^r+f(x_0^r)-f(n)\right)\\
		&=\frac{1}{|N|}\sum_{n\in N}\left(f(nx_0^\ell)\cdot x_0^r-f(n)\cdot x_0^r+f(n)\cdot x_0^r+f(x_0^r)-f(n)\right)\\
		&=\varphi(\ell)\cdot r+\frac{1}{|N|}\sum_{n\in N}\left(f(nx_0^r)-f(x_0^r)+f(x_0^r)-f(n)\right)\\
		&=\varphi(\ell)\cdot r+\varphi(r).
	\end{align*}
	Thus (2) holds.
	If $f\in B^1(G,M)$, then there exists $\psi\in M$ such that 
	$f(g)=\psi\cdot 	g-\psi$. Hence, 
	\begin{align*}
		j(f)(\ell)&=\frac{1}{|N|}\sum_{n\in N}\left(\psi\cdot nx_0^\ell-\psi-\psi\cdot 
		n+\psi\right)\\
		&=\frac{1}{|N|}\sum_{n\in N}\left(\psi\cdot nx_0^\ell-\psi\cdot n\right)
	\end{align*}
	and thus $j(f)(\ell)=\gamma\cdot\ell-\gamma$ for 
	\begin{align*}
		\gamma=\frac{1}{|N|}\sum_{n\in N}\psi\cdot n\in M^N.
	\end{align*}
	This shows (3). 	Finally we prove that $j\circ\inf=\id$. 
For this, recall that 
	if $\varphi\in 	H^1(G/N,M^N)$ and $g\in G$, then 
	$\inf(\varphi)(g)=\varphi(\bar g)$,  where $\bar g$ is the class of $g$ in $G/N\simeq\Z$. Then 
	\begin{align*}
	(j\circ\inf)(\varphi)(\ell)&=
	\frac{1}{|N|}\sum_{n\in N}\left(
	\inf(\varphi)(nx_0^\ell)-\inf(\varphi)(n)\right)=\frac{1}{|N|}\sum_{n\in N}
	\varphi(\ell)=\varphi(\ell)
	\end{align*}
	for all $\ell\in\Z$.
	This completes the proof.
\end{proof}

\section{Proof of Theorem~\ref{thm:main}}
\label{sec:main}

Assume that $X$ is a finite indecomposable rack.  We write $G=G_X$,
$N=[G_X,G_X]$. Let $A$ be an abelian group with trivial $G$-action and set
$M=\Fun(X,A)$. Fix $x_0\in X$ and $G\simeq N\rtimes\Z$ as in Lemma~\ref{lem:semidirecto}.
It follows
from Lemma~\ref{lem:split} that
\[
	0\to H^1(G/N,M^N)\xrightarrow{\inf}H^1(G,M)\xrightarrow{\res}H^1(N,M)^{G/N}\to 0
\]
splits.  We first identify the first term of this sequence.

\begin{lem}
	\label{lem:1}
	$H^1(\Z,M^N)\simeq A$, via $f\mapsto f(1)(x_0)$.

	\begin{proof}
		Recall from Lemma~\ref{lem:Brown}(2) that
		$H^1(\Z,M^N)\simeq M^N/F$, where $F$ is the submodule generated by
		$\{\varphi\cdot x_0^p-\varphi: p\in\Z, \varphi\in M^N\}$. Since 
		$X=N\trid \{x_0\}$ by Corollary~\ref{cor:semidirecto} and  
		$n\trid x_0=x\in X$ for some $n\in N$, 
		$$ 		\varphi(x)=\varphi(n\trid x_0)=(\varphi\cdot n)(x_0) 
		$$
		for all $\varphi\in M$.  Hence, if $\varphi\in M^N$, then
		$\varphi(x)=\varphi(x_0)$, $x\in X$.  Consequently, $F=\{0\}$ and
		$H^1(\Z,M^N)\simeq M^N$. But $M^N\simeq A$ as any $\varphi\in M^N$ is
		determined by its value $\varphi(x_0)\in A$. Hence the lemma follows.
	\end{proof}
\end{lem}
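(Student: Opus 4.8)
The plan is to identify $H^1(\Z, M^N)$ by first applying the reformulation from Lemma~\ref{lem:Brown}(2), which tells us that since $\Z \simeq G/N$, we have an isomorphism $H^1(\Z, M^N) \simeq M^N/F$, where $F$ is the subgroup generated by all differences $\varphi \cdot x_0^p - \varphi$ with $p \in \Z$ and $\varphi \in M^N$. So the proof reduces to two computations: first, showing that $F$ is trivial, and second, identifying the quotient $M^N$ itself with $A$. The map claimed in the statement, $f \mapsto f(1)(x_0)$, will then be the composite of the Lemma~\ref{lem:Brown}(2) map $f \mapsto f(1)$ followed by evaluation at $x_0$.

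First I would analyze the fixed-point module $M^N = \Fun(X,A)^N$. The key observation is that by Corollary~\ref{cor:semidirecto}, $N$ acts transitively on $X$, so every $x \in X$ can be written as $x = n \trid x_0$ for some $n \in N$. Using the definition of the $G_X$-action on $M = \Fun(X,A)$, namely $(\varphi \cdot n)(x_0) = \varphi(n \trid x_0) = \varphi(x)$, I would deduce that any $N$-invariant function $\varphi$ satisfies $\varphi(x) = \varphi(n \trid x_0) = (\varphi \cdot n)(x_0) = \varphi(x_0)$ for all $x$. Hence an element of $M^N$ is forced to be constant, and conversely constant functions are clearly $N$-invariant, so $M^N \simeq A$ via $\varphi \mapsto \varphi(x_0)$.

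Next I would show $F = \{0\}$. Since every $\varphi \in M^N$ is constant, it is in particular fixed by the full $G_X$-action: indeed $\varphi \cdot x_0^p$ is again constant with the same value $\varphi(x_0)$, because the action permutes the argument and $\varphi$ takes a single value on all of $X$. Therefore each generator $\varphi \cdot x_0^p - \varphi$ of $F$ vanishes, giving $F = \{0\}$ and thus $H^1(\Z, M^N) \simeq M^N \simeq A$.

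The main (minor) obstacle is bookkeeping the composition of maps so that the final isomorphism matches the stated formula $f \mapsto f(1)(x_0)$: one must trace an element $f \in H^1(\Z, M^N)$ through the Lemma~\ref{lem:Brown}(2) isomorphism to its value $f(1) \in M^N$, and then through evaluation at $x_0$. Since $M^N$ consists of constant functions, $f(1)(x_0)$ indeed captures the whole datum, so no information is lost. I do not expect any substantive difficulty here; the entire argument rests on the transitivity of the $N$-action from Corollary~\ref{cor:semidirecto}, which collapses $M^N$ to the constants and kills $F$ simultaneously.
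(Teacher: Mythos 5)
Your proof is correct and follows essentially the same route as the paper's: both invoke Lemma~\ref{lem:Brown}(2) to reduce to $M^N/F$, use the transitivity of the $N$-action from Corollary~\ref{cor:semidirecto} to show that $N$-invariant functions are constant (so $M^N\simeq A$ via evaluation at $x_0$), and conclude that $F=\{0\}$ because constants are fixed by the whole $G_X$-action. The only difference is that you spell out explicitly why the generators $\varphi\cdot x_0^p-\varphi$ vanish, a step the paper leaves implicit.
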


As for the third term, we will show in Proposition~\ref{pro:2} that 
\begin{align}\label{eqn:third-term}
H^1(N,M)^\Z\simeq  \Hom(N_0,A). 
\end{align}
To do so, we first need several lemmas. 

\begin{lem}
	\label{lem:step1}
	The map
	\begin{equation}
		\label{eqn:step1}
		Z^1(N,M)\to \Hom(N_0,A), 
		\quad f\mapsto f_0,
	\end{equation}
	where $f_0(n_0)=f(n_0)(x_0)$ for $n_0\in N_0$, is well-defined
	and factors to a map $H^1(N,M)\to \Hom(N_0,A)$.

	\begin{proof}
		We first prove that $f_0$ is indeed a group homomorphism:
		\begin{align*}
			f_0(n_0n_0')&=f(n_0n_0')(x_0)=(f(n_0)\cdot n_0')(x_0) + f(n_0')(x_0)\\
			&=f(n_0)(n_0'\trid x_0) + f(n_0')(x_0)\\
			&=f(n_0)(x_0) + f(n_0')(x_0)=f_0(n_0) + f_0(n_0'), \ n_0,n_0'\in N_0.
		\end{align*}

		We now show that the map factors to a map $H^1(N,M)\to \Hom(N_0,A)$. 
		Let $f\in B^1(N,M)$, that is $f(n)=\varphi\cdot n-\varphi$ for some $\varphi\in M$. Then
		\begin{align*}
			f_0(n_0)&=f(n_0)(x_0)=(\varphi\cdot n_0)(x_0)-\varphi(x_0)\\
			&=\varphi (n_0\trid x_0)-\varphi(x_0)=\varphi (x_0)-\varphi(x_0)=0.
		\end{align*}
		This completes the proof.
	\end{proof}
\end{lem}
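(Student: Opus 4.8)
The plan is to verify two separate things: that the assignment $f\mapsto f_0$ with $f_0(n_0)=f(n_0)(x_0)$ lands in $\Hom(N_0,A)$ (i.e.\ $f_0$ is a genuine group homomorphism $N_0\to A$), and that this assignment kills coboundaries, so that it descends to $H^1(N,M)\to\Hom(N_0,A)$.

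For the first point I would compute $f_0(n_0 n_0')$ directly using the cocycle condition in $Z^1(N,M)$, namely $f(n n')=f(n)\cdot n'+f(n')$. Evaluating at $x_0$ gives
\begin{align*}
f_0(n_0 n_0')=f(n_0 n_0')(x_0)=\bigl(f(n_0)\cdot n_0'\bigr)(x_0)+f(n_0')(x_0).
\end{align*}
The key simplification is $\bigl(f(n_0)\cdot n_0'\bigr)(x_0)=f(n_0)(n_0'\trid x_0)$ by the definition of the $G_X$-action on $M=\Fun(X,A)$. Here the \emph{crucial} use of the hypothesis $n_0'\in N_0$ enters: since $N_0$ is by definition the stabilizer of $x_0$ in $N_X$, we have $n_0'\trid x_0=x_0$, so this term collapses to $f(n_0)(x_0)=f_0(n_0)$ and additivity follows. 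This is exactly the step where the choice of target group $\Hom(N_0,A)$ (rather than all of $\Hom(N,A)$) becomes essential, and I expect it to be the conceptual heart of the lemma.

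For the second point I would take $f\in B^1(N,M)$, write $f(n)=\varphi\cdot n-\varphi$ for some $\varphi\in M$, and evaluate:
\begin{align*}
f_0(n_0)=f(n_0)(x_0)=(\varphi\cdot n_0)(x_0)-\varphi(x_0)=\varphi(n_0\trid x_0)-\varphi(x_0).
\end{align*}
Again invoking $n_0\trid x_0=x_0$ for $n_0\in N_0$, the two terms cancel and $f_0=0$. Hence coboundaries map to zero and the map factors through $H^1(N,M)$, as claimed.

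The main (indeed only) obstacle is keeping track of the module structure: one must remember that the action is the right action $(\varphi\cdot n)(y)=\varphi(n\trid y)$ and apply it in the correct order when expanding $f(n_0 n_0')$ and $\varphi\cdot n_0$. There is no deep difficulty here — once the stabilizer property $n_0\trid x_0=x_0$ is used, both computations are immediate. I would therefore present it as a short direct verification rather than anything requiring homological machinery, reserving the substantive work (the $\Z$-invariance and the surjectivity onto $\Hom(N_0,A)$) for the subsequent lemmas leading to Proposition~\ref{pro:2}.
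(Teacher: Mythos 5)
Your proposal is correct and follows essentially the same route as the paper: both parts are the same direct verification, expanding the cocycle condition (resp.\ the coboundary formula $f(n)=\varphi\cdot n-\varphi$), evaluating at $x_0$, and collapsing via the stabilizer property $n_0\trid x_0=x_0$ for $n_0\in N_0$. Nothing is missing.
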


\begin{lem}
	\label{lem:step3}
	\label{lem:step4}
	The map $H^1(N,M)\to \Hom(N_0,A)$, $f\mapsto f_0$, is an injective group
	homomorphism.

	\begin{proof}
		It is clear that $f\mapsto f_0$ is a group homomorphism. 

		Let $f\in H^1(N,M)$ be such that $f_0=0$. That is, $f(n_0)(x_0)=0$ for
		every $n_0\in N_0$. We claim that there is $\varphi\in M$ such that
		$f(m)=(\varphi\cdot m)-\varphi$ and thus $f=0$ in $H^1(N,M)$. Set 
		\begin{equation*}
			\varphi(x)\coloneqq f(n)(x_0)\qquad \text{ if } x=n\trid x_0.
		\end{equation*}

		Let us check that this is well-defined: if $x=n\trid x_0=n'\trid x_0$, then
		$n^{-1}n'\in N_0$. Since $f(1)=0$, one obtains that $f(n^{-1})=-f(n)\cdot
		n^{-1}$. Then 
		\begin{align*}
			0&=f_0(n^{-1}n')=f(n^{-1}n')(x_0)\\
			&=-f(n)(n^{-1}n'\trid x_0)+f(n')(x_0)
			=-f(n)(x_0)+f(n')(x_0),
		\end{align*}
		and thus $\varphi(x)$ does not depend on $n\in N$ such that $x=n\trid
		x_0$.  Finally for each $m\in N$ and every $x=n\trid x_0\in
		X$ with $n\in N$,
		\begin{align*}
			(\varphi&\cdot m-\varphi)(x)=\varphi(m\trid x)-\varphi(x)=\varphi(m\trid n\trid x_0)-\varphi(n\trid x_0)\\
			&=f(mn)(x_0)-f(n)(x_0)=(f(m)\cdot n)(x_0)+f(n)(x_0)-f(n)(x_0)\\
			&=f(m)(n\trid x_0)=f(m)(x),
		\end{align*}
		and therefore $f=0$.
	\end{proof}
\end{lem}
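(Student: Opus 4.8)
The plan is to focus on injectivity, as the additivity of $f\mapsto f_0$ is immediate from the definition and its well-definedness on $H^1(N,M)$ is already Lemma~\ref{lem:step1}. So I would start from a cocycle $f\in Z^1(N,M)$ whose class lies in the kernel, i.e.\ $f_0=0$, meaning $f(n_0)(x_0)=0$ for all $n_0\in N_0$, and aim to exhibit some $\varphi\in M$ with $f(m)=\varphi\cdot m-\varphi$ for all $m\in N$; this shows $f\in B^1(N,M)$, hence $f=0$ in $H^1(N,M)$. The structural input is the transitivity of the $N$-action on $X$ from Corollary~\ref{cor:semidirecto}: every $x\in X$ can be written as $x=n\trid x_0$ for some $n\in N$, which makes $\varphi(x):=f(n)(x_0)$ the natural candidate.

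The main obstacle is the well-definedness of $\varphi$: the element $n$ with $n\trid x_0=x$ is determined only up to right multiplication by the stabilizer $N_0$, so I must check that $f(n)(x_0)=f(n')(x_0)$ whenever $n^{-1}n'\in N_0$. This is precisely where the hypothesis $f_0=0$ is consumed. I would expand $0=f_0(n^{-1}n')=f(n^{-1}n')(x_0)$ by means of the additive right-module cocycle identity $f(ab)=f(a)\cdot b+f(b)$ together with $f(n^{-1})=-f(n)\cdot n^{-1}$ (which follows from $f(1)=0$); after evaluating at $x_0$ and using that $n^{-1}n'\in N_0$ fixes $x_0$, the expression collapses to $-f(n)(x_0)+f(n')(x_0)$, forcing the two values to coincide. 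I expect this verification to be the crux of the whole argument.

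With $\varphi$ well defined, the remaining check that $f$ equals the coboundary $m\mapsto\varphi\cdot m-\varphi$ should be a short telescoping computation. For $m\in N$ and $x=n\trid x_0$ I would write $(\varphi\cdot m-\varphi)(x)=\varphi(m\trid x)-\varphi(x)$, use $m\trid x=(mn)\trid x_0$ to get $\varphi(m\trid x)=f(mn)(x_0)$, and apply the cocycle identity once more to rewrite $f(mn)(x_0)=f(m)(x)+f(n)(x_0)$; the two copies of $f(n)(x_0)$ cancel, leaving $f(m)(x)$. Since $x\in X$ was arbitrary, this yields $f(m)=\varphi\cdot m-\varphi$ for every $m\in N$, so the kernel is trivial and $f\mapsto f_0$ is injective.
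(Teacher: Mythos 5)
Your proposal is correct and follows essentially the same route as the paper's own proof: the same candidate coboundary $\varphi(x)=f(n)(x_0)$ built from transitivity of the $N$-action, the same well-definedness check consuming the hypothesis $f_0=0$ via the cocycle identity and $f(n^{-1})=-f(n)\cdot n^{-1}$, and the same telescoping verification that $f(m)=\varphi\cdot m-\varphi$. No gaps.
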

Recall the definition of the $\Z$-action on $H^1(N,M)$ from \eqref{eqn:action-invariants}.
\begin{lem}
	\label{lem:step5}
	Assume $X$ is a quandle. Then
	$H^1(N,M)=H^1(N,M)^\Z$.

	\begin{proof}
		Let $f\in H^1(N,M)$ and set $g=f-f\cdot x_0$. If $n_0\in N_0$, then 
		\begin{align*}
			g_0(n_0)=f(n_0)(x_0)-f(x_0n_0x_0^{-1})(x_0\trid x_0)=0.
		\end{align*}
		Thus $g_0=0$ and hence $f=f\cdot x_0$ for all $f\in H^1(N,M)$ by
		Lemma~\ref{lem:step4}, since the group homomorphism $g\mapsto g_0$ is
		injective.
	\end{proof}
\end{lem}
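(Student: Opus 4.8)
The plan is to exploit the injective homomorphism $H^1(N,M)\to\Hom(N_0,A)$, $f\mapsto f_0$, furnished by Lemma~\ref{lem:step4}. Since the inclusion $H^1(N,M)^\Z\subseteq H^1(N,M)$ is automatic, it suffices to prove the reverse inclusion, i.e.\ that the generator $x_0$ of $\Z\simeq G/N$ fixes every class, $f\cdot x_0=f$ for all $f\in H^1(N,M)$. By injectivity of $f\mapsto f_0$ this reduces to the single identity $(f\cdot x_0)_0=f_0$ in $\Hom(N_0,A)$; equivalently, writing $g=f-f\cdot x_0$, it suffices to show $g_0=0$.

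First I would compute $g_0(n_0)=g(n_0)(x_0)$ for $n_0\in N_0$. Unwinding the $\Z$-action \eqref{eqn:action-invariants} and the $G_X$-action on $M=\Fun(X,A)$ gives $(f\cdot x_0)(n_0)(x_0)=f(x_0n_0x_0^{-1})(x_0\trid x_0)$. This is the step where the quandle hypothesis is essential: the identity $x_0\trid x_0=x_0$ collapses the evaluation to $f(x_0n_0x_0^{-1})(x_0)$, so that $g_0(n_0)=f_0(n_0)-f_0(x_0n_0x_0^{-1})$. Note that $x_0n_0x_0^{-1}$ again lies in $N_0$, since conjugation by $x_0\in\operatorname{Stab}_{G}(x_0)$ preserves both $N$ (normal in $G$) and $\operatorname{Stab}_G(x_0)$, hence their intersection $N_0$. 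Thus the right-hand side makes sense, and the whole problem is reduced to the conjugation-invariance $f_0(x_0n_0x_0^{-1})=f_0(n_0)$.

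The hard part will be exactly this invariance, i.e.\ showing that conjugation by $x_0$ acts trivially on $N_0^{\ab}=N_0/[N_0,N_0]$ (equivalently $[x_0,n_0]\in[N_0,N_0]$ for every $n_0\in N_0$). As a sanity check, when $f$ is the restriction of a class in $H^1(G,M)$ the invariance is immediate — expanding $\tilde f(x_0n_0)=\tilde f((x_0n_0x_0^{-1})x_0)$ and evaluating at $x_0$ with $x_0\trid x_0=x_0$ and $n_0\trid x_0=x_0$ yields it at once — so the real content is that \emph{every} class is $\Z$-invariant. My approach to the general case would be to prove the sharper statement that $c_{x_0}\colon n_0\mapsto x_0n_0x_0^{-1}$ is a \emph{class-preserving} automorphism of $N_0$, so that $x_0n_0x_0^{-1}$ and $n_0$ are already conjugate inside $N_0$; since $A$ is abelian, any $f_0\in\Hom(N_0,A)$ is a class function and cannot distinguish them. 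To locate the conjugating element I would use the relation $g\,y\,g^{-1}=g\trid y$, valid for every $g\in G_X$ and every generator $y\in X$: writing $n_0=y_1^{e_1}\cdots y_m^{e_m}$ with $\sum_i e_i=0$, one has $x_0n_0x_0^{-1}=\prod_i(x_0\trid y_i)^{e_i}$ and $w n_0 w^{-1}=\prod_i(w\trid y_i)^{e_i}$, so both conjugates are governed by how $x_0$, resp.\ a candidate $w\in N_0$, act on the letters $y_i$. Transitivity of the $N$-action on $X$ (Corollary~\ref{cor:semidirecto}) together with $x_0\trid x_0=x_0$ should let one produce such a $w\in N_0$, and this existence statement is the technical crux.

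Granting the invariance, $g_0\equiv 0$, and the injectivity of $f\mapsto f_0$ from Lemma~\ref{lem:step4} forces $g=f-f\cdot x_0=0$ in $H^1(N,M)$. As $f$ was arbitrary, the $\Z$-action is trivial and $H^1(N,M)=H^1(N,M)^\Z$, as claimed.
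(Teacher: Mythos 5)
Your reduction is the right one, and it matches the paper's: by the injectivity of $f\mapsto f_0$ (Lemma~\ref{lem:step4}) it suffices to show $g_0=0$ for $g=f-f\cdot x_0$, and unwinding the two actions with $x_0\trid x_0=x_0$ gives $g_0(n_0)=f_0(n_0)-f_0(x_0n_0x_0^{-1})$. But the proposal then stops exactly where the content lies: you declare the invariance $f_0(x_0n_0x_0^{-1})=f_0(n_0)$ to be ``the technical crux,'' sketch a strategy via class-preserving automorphisms of $N_0$ (writing $n_0$ as a word in the generators and producing a conjugator $w\in N_0$ by transitivity of the $N$-action), and never produce $w$. As written this is a genuine gap: the one step that carries the whole lemma is asserted, not proved.

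The gap closes in one line, and this is what the paper's terse computation is silently using. In $G_X$ one has $g\,\iota(y)\,g^{-1}=\iota(g\trid y)$ for every $g\in G_X$ and $y\in X$, where $\iota\colon X\to G_X$ is the canonical map: this is the defining relation~\eqref{eq:G_X} for generators and propagates to inverses and products. Hence the condition $n_0\in N_0$, i.e. $n_0\trid x_0=x_0$, says precisely that $n_0\,x_0\,n_0^{-1}=x_0$ in $G_X$; every element of $N_0$ \emph{commutes} with $x_0$. Therefore $x_0n_0x_0^{-1}=n_0$ on the nose --- conjugation by $x_0$ is not merely class-preserving on $N_0$, it is the identity map --- so $f_0(x_0n_0x_0^{-1})=f_0(n_0)$ needs no class-function argument, no word manipulation, and no transitivity. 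With this observation your computation becomes exactly the paper's proof: $g_0(n_0)=f(n_0)(x_0)-f(x_0n_0x_0^{-1})(x_0\trid x_0)=0$. Note also that the quandle axiom is still genuinely needed even after the commutation is known: without $x_0\trid x_0=x_0$ one is left with $f(n_0)(x_0)-f(n_0)(x_0\trid x_0)$, which has no reason to vanish for a general rack.
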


In order to show the surjectivity of the map $f\mapsto f_0$ from Lemma \ref{lem:step3}, we need to fix a 
decomposition
of $N$ into $N_0$-cosets 
\[
N=\bigsqcup_{i=0}^k \sigma_iN_0,
\] 
where $\sigma_i\in N$ is a representative, $\sigma_0N_0=N_0$. 
We define 
\begin{align}
\label{eqn:sigma(n)}
\sigma:N\to \{\sigma_0,\dots,\sigma_k\},
\quad
\sigma(n)=\sigma_i\quad\text{if $n\in \sigma_iN_0$}.
\end{align} 
For $n\in N$ we consider $c(n)\in N_0$ defined by 
\begin{align}
\label{eqn:c(n)}
n=\sigma(n)c(n).
\end{align}

\begin{rem}
	\label{rem:c_casi_es_morfismo}
	For all $n\in N$ and $n_0\in N_0$ it follows that $c(nn_0)=c(n)n_0$.
	Indeed, $nn_0=\sigma(n)c(n)n_0=\sigma(nn_0)c(nn_0)$ and thus the claim
	holds since each $m\in N$ decomposes uniquely as $m=\sigma(m)c(m)$. 
\end{rem}

\begin{lem}
	\label{lem:step6}
	The map $H^1(N,M)\to \Hom(N_0,A)$, $f\mapsto f_0$, is surjective. 

	\begin{proof}
		Let $g\colon N_0\to A$ be a group homomorphism; we shall construct an
		$f\in Z^1(N,M)$ such that $f_0=g$.  
		We claim that the map $f:N\to M$, $n\mapsto f(n)$, given by 
		\begin{align}\label{eqn:reconstruction}
			f(n)(x)=g(c(nm))-g(c(m))=g(c(nm)c(m)^{-1}), 
		\end{align} 
		where $m\in N$ is such that 
		$x=m\trid x_0$, 
		is well-defined. 
		Indeed, if $m'\in N$ also satisfies $x=m'\trid x_0$,
		then $m^{-1}m'\in N_0$ and thus $\sigma(m)^{-1}\sigma(m')\in N_0$. That
		is $\sigma(m)=\sigma(m')$ and thus $\sigma(nm)=\sigma(nm')$ for
		every $n\in N$ since $(nm')^{-1}nm\in N_0$. As $g$ is a group homomorphism, 
		\begin{align*}
			g(c(nm)c(m)^{-1})-g(c(nm')c(m')^{-1})&=g(c(nm)c(m)^{-1}c(m')c(nm')^{-1}).
		\end{align*}
		Now, $c(nm)c(m)^{-1}c(m')c(nm')^{-1}$ is, by definition,
		\begin{align*}
			(\sigma(nm)^{-1}nm)(m^{-1}\sigma(m))(\sigma(m')^{-1}m')(m'\,^{-1}n^{-1}\sigma(nm'))=1.
		\end{align*}
		Hence $g(c(nm)c(m)^{-1})-g(c(nm')c(m')^{-1})=g(1)=0$ and thus $f$ does not depend on the choice of $m$.
		
		Now we show that $f\in Z^1(N,M)$. Let $x\in X$, $n,n'\in N$ and $m\in N$ be such that
		$x=m\trid x_0$. On the one hand, we have 
		\begin{align*}
			f(nn')(x)&=g(c(nn'm))-g(c(m)).
		\end{align*}
		On the other,
		\begin{align*}
			(f(n)\cdot n')(x) + f(n')(x)&=f(n) (n'\trid x) + f(n')(x)\\
			&=g(c(nn'm))-g(c(n'm))+g(c(n'm))-g(c(m))\\
			&=f(nn')(x).
		\end{align*}
		Finally we see that $g=f_0$, that is $f_0(n)=g(n)$ for $n\in N_0$. 
		Now, if $n\in N_0$, then 
				$c(n)=c(1\cdot n)=c(1)n$ \emph{cf.} Remark \ref{rem:c_casi_es_morfismo}. Also, as as $x_0=1\trid x_0$,
			\begin{align*}
			f_0(n)&=f(n)(x_0)=g(c(n\cdot 1)c(1)^{-1})=g(c(1\cdot n)) -g (c(1))\\
			&=g(c(1)n) -g (c(1))=g(c(1)) +g(n) -g (c(1))=g(n)
			\end{align*}
			and the lemma follows.
	\end{proof}
\end{lem}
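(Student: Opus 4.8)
The plan is to construct an explicit right inverse to the map $f\mapsto f_0$. Given $g\in\Hom(N_0,A)$, I will produce a cocycle $f\in Z^1(N,M)$ with $f_0=g$. The construction rests on the transitivity of the $N$-action on $X$ (Corollary~\ref{cor:semidirecto}) and on the coset bookkeeping $\sigma$ and $c$ introduced in~\eqref{eqn:sigma(n)}--\eqref{eqn:c(n)}.

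First I would define, for $n\in N$ and $x\in X$, the value
\[
	f(n)(x)=g\bigl(c(nm)c(m)^{-1}\bigr),
\]
where $m\in N$ is any element with $x=m\trid x_0$; such an $m$ exists by transitivity. The crucial point is that this does not depend on the chosen $m$. If $m'$ is another representative, then $m^{-1}m'\in N_0$, whence $\sigma(m)=\sigma(m')$ and, since $(nm)^{-1}(nm')\in N_0$, also $\sigma(nm)=\sigma(nm')$. As $g$ is a homomorphism, the discrepancy equals $g$ applied to $c(nm)c(m)^{-1}c(m')c(nm')^{-1}$; rewriting each $c(\cdot)$ as $\sigma(\cdot)^{-1}(\cdot)$ and cancelling the matching $\sigma$-terms collapses this product to the identity. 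I expect this independence check to be the main obstacle, since it is the only step where the coset combinatorics must be handled with care.

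Once $f$ is well-defined, I would verify the cocycle identity $f(nn')=f(n)\cdot n'+f(n')$ in $M$ by evaluating both sides at $x=m\trid x_0$. Here the key observation is that $n'\trid x=(n'm)\trid x_0$, so $n'm$ may be used as the representative governing $f(n)(n'\trid x)$; the two contributions then telescope:
\[
	\bigl(g(c(nn'm))-g(c(n'm))\bigr)+\bigl(g(c(n'm))-g(c(m))\bigr)=g(c(nn'm))-g(c(m)),
\]
which is precisely $f(nn')(x)$. Finally, to see $f_0=g$, I would evaluate at $n\in N_0$ using $x_0=1\trid x_0$ and $m=1$: since $\sigma_0=1$ forces $c(1)=1$ and Remark~\ref{rem:c_casi_es_morfismo} gives $c(n)=c(1)n=n$, the formula reduces to $f_0(n)=g(n)$. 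This exhibits the desired preimage of $g$ and establishes surjectivity.
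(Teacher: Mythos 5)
Your proposal is correct and follows essentially the same route as the paper: the same explicit formula $f(n)(x)=g\bigl(c(nm)c(m)^{-1}\bigr)$, the same well-definedness check via $\sigma(m)=\sigma(m')$ and $\sigma(nm)=\sigma(nm')$, the same telescoping verification of the cocycle identity, and the same evaluation at $m=1$ to get $f_0=g$. The only cosmetic difference is at the last step: you normalize $\sigma_0=1$ so that $c(1)=1$ and $c(n)=n$, whereas the paper only assumes $\sigma_0N_0=N_0$ and instead cancels $g(c(1))$ using that $g$ is a homomorphism; both are fine since the representative of $N_0$ can always be chosen to be $1$.
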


Now we proceed to show \eqref{eqn:third-term}.

\begin{pro}\label{pro:2}
	The map $Z^1(N,M)\to \Hom(N_0,A)$ given by  
	$f\mapsto f_0$, 
	where $f_0(n_0)=f(n_0)(x_0)$ for $n_0\in N_0$, induces
	a group isomorphism 
	\[
	H^1(N,M)^\Z\to \Hom(N_0,A).
	\]

	\begin{proof}
		Lemma~\ref{lem:step5} implies that $H^1(N,M)^{\Z}\simeq H^1(N,M)$ and
		Lemmas~\ref{lem:step4} and~\ref{lem:step6} yield
		$H^1(N,M)\simeq\Hom(N_0,A)$, as desired. 
	\end{proof}
\end{pro}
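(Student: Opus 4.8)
The plan is to read Proposition~\ref{pro:2} as a synthesis of the lemmas already established, so that no new construction is needed; the substantive work has been front-loaded into Lemmas~\ref{lem:step4},~\ref{lem:step5} and~\ref{lem:step6}. First I would record that, by Lemma~\ref{lem:step1}, the assignment $f\mapsto f_0$ descends from $Z^1(N,M)$ to a well-defined group homomorphism $H^1(N,M)\to\Hom(N_0,A)$, since coboundaries are sent to $0$.

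Next I would verify bijectivity of this induced map. Lemma~\ref{lem:step4} gives injectivity: the kernel consists of those classes $f$ with $f_0=0$, and the explicit $\varphi\in M$ built there witnesses $f=0$ in $H^1(N,M)$. Surjectivity is Lemma~\ref{lem:step6}: given $g\in\Hom(N_0,A)$, the formula~\eqref{eqn:reconstruction} produces a cocycle $f\in Z^1(N,M)$ with $f_0=g$. Together these yield an isomorphism $H^1(N,M)\xrightarrow{\sim}\Hom(N_0,A)$ of abelian groups.

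It remains to replace $H^1(N,M)$ by its $\Z$-invariants $H^1(N,M)^{\Z}$, which is exactly where the quandle hypothesis enters. Lemma~\ref{lem:step5} shows that the $\Z$-action of~\eqref{eqn:action-invariants} is trivial on $H^1(N,M)$, so $H^1(N,M)^{\Z}=H^1(N,M)$ and the inclusion of invariants is in fact an equality. Restricting the isomorphism of the previous step along this equality gives the claimed group isomorphism $H^1(N,M)^{\Z}\to\Hom(N_0,A)$, completing the proof.

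Since every ingredient is supplied by the preceding lemmas, there is no genuine obstacle at the level of the proposition itself. If I had to point to where the real content lives, it would be the surjectivity in Lemma~\ref{lem:step6}: turning an abstract homomorphism $g$ on the finite group $N_0$ into a $1$-cocycle on all of $N$ requires the coset bookkeeping through $\sigma$ and $c$ and the verification that~\eqref{eqn:reconstruction} is independent of the chosen coset representative. The role of Proposition~\ref{pro:2} is then simply to transport that identification to the $\Z$-invariant subgroup, which via Lemma~\ref{lem:step5} is costless.
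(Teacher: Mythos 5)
Your proof is correct and follows exactly the paper's route: Lemma~\ref{lem:step1} for well-definedness, Lemma~\ref{lem:step4} for injectivity, Lemma~\ref{lem:step6} for surjectivity, and Lemma~\ref{lem:step5} to identify $H^1(N,M)^{\Z}$ with $H^1(N,M)$. Your additional remark correctly locates the substantive content in the surjectivity argument of Lemma~\ref{lem:step6}, but the assembly itself matches the paper's proof.
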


This allows us to complete the proof of Theorem~\ref{thm:main}.

\subsubsection*{Proof of Theorem~\ref{thm:main}}
\label{sec:proof}
	Using the cocycle condition, we get
	\begin{align*}
		j(f)(\ell)&=\frac{1}{|N|}\sum_{n\in N}\left( f(n)\cdot x_0^{\ell}+f(x_0^\ell)-f(n)\right)\\
		&=f(x_0^\ell)+\frac{1}{|N|}\sum_{n\in N}\left(f(n)\cdot x_0^\ell-f(n)\right).
	\end{align*}
	Hence, as $X$ is a quandle, 
	for each $\ell\in\Z$, 
	\begin{equation}
		j(f)(\ell)(x_0)=f(x_0)(x_0).
		\label{eqn:j(f)(1)}
	\end{equation}

	Since $H^1(G/N,M^N)\simeq A$ by Lemma~\ref{lem:1} and by Proposition~\ref{pro:2}
	there exists an isomorphism $\zeta\colon H^1(N,M)^{\Z}\simeq\Hom(N_0,A)$, we write the
	inflation-restriction sequence~\eqref{eqn:sec} as 
	\begin{align}
		\label{eqn:sec2}
		0\to A\xrightarrow{\inf_0}H^1(G,M)\xrightarrow{\res_0}\Hom(N_0,A)\to 0,
	\end{align}
	where $\res_0(f)=\res(f)_0$ for all $f\in H^1(G,M)$ and  
	$\inf_0$ is the composition $A\simeq M^N\simeq H^1(G/N,M^N)$. 
	We set 
	$f_0=\res_0(f)$
	by abuse of notation, i.e. 
	\begin{align}\label{eqn:f_0}
		f_0(n_0)=f(n_0)(x_0), \quad n_0\in N_0.
	\end{align}

	A retraction for $\inf_0$ is given by the composition 
	$$
	j_0\colon H^1(G,M)\overset{j}{\to} H^1(G/N,M^N)\simeq A,
	$$
	using Lemmas~\ref{lem:split} and \ref{lem:1}, that is 
	\begin{align}
		\label{eqn:f(x_0)}
		j_0(f)=j(f)(1)(x_0)=f(x_0)(x_0),
	\end{align}
	 \emph{cf.}~\eqref{eqn:j(f)(1)}. Hence $H^1(G,M)\simeq A\times\Hom(N_0,A)$ via 
	\begin{align}\label{eqn:correspondence-proof}
		f\mapsto (f(x_0)(x_0),f_0).
	\end{align}
This completes the proof. \qed

\section{Proof of Theorem~\ref{thm:reconstruction}}
\label{sec:reconstruction}

In this section we show the Reconstruction Theorem \ref{thm:reconstruction}.
We fix $x_0\in X$ and write $N_0\leq N_X$ for the
stabilizer of $x_0$ in $N_X$. By Lemma~\ref{lem:isoclinic},
$N_0$ is a finite group. 

The key for the proof of Theorem~\ref{thm:reconstruction} lays in the existence
of a particular class of decompositions \[
N_X=\bigsqcup_{i=0}^k \sigma_iN_0\]
of $N_X$ into $N_0$-cosets, which are \emph{good} in our context. 

\begin{pro}
	\label{pro:cosets}
	Let $X$ be a finite indecomposable quandle. Then there exists a
	decomposition $N_X=\bigsqcup_{i=0}^k \sigma_iN_0$ of $N_X$ into $N_0$-cosets
	such that the following hold:
	\begin{enumerate}
		\item $\sigma_0=1$.
		\item For each $i\in\{0,\dots,k\}$ there is $j\in\{0,\dots,k\}$ such that $x_0\trid
			\sigma_i=\sigma_j$.
		\item For each $x\in X$ there is $j\in\{0,\dots,k\}$ such that $\sigma_j\trid x_0=x$.
	\end{enumerate}
\end{pro}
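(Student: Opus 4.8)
The plan is to exploit the transitive action of $N_X$ on $X$ (Corollary~\ref{cor:semidirecto}) to reinterpret the three conditions as statements about a single transversal and the conjugation action of $x_0$. First I would record that the stabilizer $N_0$ is exactly the centralizer $C_{N_X}(x_0)$: since the $G_X$-action is by conjugation, $n\trid x_0=x_0$ means $nx_0n^{-1}=x_0$ in $G_X$. Orbit--stabilizer then gives a bijection $N_X/N_0\xrightarrow{\sim}X$, $\sigma N_0\mapsto\sigma\trid x_0$, so a choice of coset representatives $\{\sigma_i\}$ is the same as a choice of one element of $N_X$ lying over each $x\in X$; in these terms condition (3) is automatic and condition (1) is the normalization $\sigma\mapsto 1$ over $x_0$. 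The quandle axiom $x_0\trid x_0=x_0$ is what makes condition (2) meaningful: it gives $x_0N_0x_0^{-1}=N_0$, so conjugation by $x_0$ descends to a permutation $\tau$ of $N_X/N_0$, and a direct computation shows that under the bijection above $\tau$ corresponds to $\varphi_{x_0}$ on $X$. Thus condition (2) is precisely the requirement that the set $\{\sigma_0,\dots,\sigma_k\}$ be stable under conjugation by $x_0$.

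With this dictionary in place I would build the transversal one $\varphi_{x_0}$-orbit at a time, which is the recursive method alluded to after the statement. On the fixed point $x_0$ put $\sigma=1$ (giving (1)); on a cycle $y,\varphi_{x_0}(y),\dots,\varphi_{x_0}^{d-1}(y)$ of length $d$ choose a single seed $\sigma\in N_X$ with $\sigma\trid x_0=y$ and propagate by $\sigma_{\varphi_{x_0}^{\,l}(y)}:=x_0^{\,l}\sigma x_0^{-l}$ for $0\le l<d$. Each propagated element automatically lies over the correct point of $X$ and satisfies $x_0\trid\sigma_{\varphi_{x_0}^{\,l}(y)}=\sigma_{\varphi_{x_0}^{\,l+1}(y)}$ for $l<d-1$, so the only thing to check is that the cycle closes up, i.e.\ that $x_0\trid\sigma_{\varphi_{x_0}^{\,d-1}(y)}$ returns to the seed. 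Processing the orbits in turn (and recording the already-used cosets) then assembles a full transversal satisfying (1)--(3).

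The hard part is exactly this closing-up condition: $x_0^{d}\sigma x_0^{-d}=\sigma$, i.e.\ the seed must centralize $x_0^{d}$. Since $\sigma x_0\sigma^{-1}=y$, conjugating shows this is equivalent to the identity $y^{d}=x_0^{d}$ in $G_X$ for an element $y$ whose $\varphi_{x_0}$-cycle has length $d$. I would attack it through the element $x_0^{n_{x_0}}$: because $\varphi_{x_0}^{\,n_{x_0}}=\id$ this power acts trivially on $X$, and the kernel of the $G_X$-action is the centre $Z(G_X)$ (as in the proof of Lemma~\ref{lem:Schur}), so $x_0^{n_{x_0}}$ is central; being fixed under conjugation, every $y\in X$ shares the central value $y^{n_{x_0}}=x_0^{n_{x_0}}$. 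Since $d$ divides $n_{x_0}$ and $y$ already commutes with $x_0^{d}$ (this is what ``cycle length $d$'' means), the remaining task is to descend the central equality from the exponent $n_{x_0}$ to the exponent $d$; equivalently, to find a seed fixed by the finite-order permutation $\sigma\mapsto x_0^{d}\sigma x_0^{-d}$ of the finite fibre over $y$. This fixed-point/centralizer statement is where indecomposability must enter in an essential way, and it is the step I expect to cost the most work; once it is in hand, the recursion above produces the good decomposition together with the constructive algorithm promised in Proposition~\ref{pro:cosets}.
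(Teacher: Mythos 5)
Your reduction is correct as far as it goes, and it is genuinely sharper than a routine rewording: conditions (1) and (3) are indeed automatic from Corollary~\ref{cor:semidirecto}, condition (2) is indeed equivalent to the set of representatives being literally invariant under conjugation by $x_0$ (note $N_0$ even centralizes $x_0$, since $n_0\trid x_0=x_0$ gives $n_0x_0n_0^{-1}=x_0$ in $G_X$), and your orbit-by-orbit propagation correctly isolates the one nontrivial point: for each $\varphi_{x_0}$-cycle with base point $y$ and length $d$ one needs a seed $\sigma\in N_X$ over $y$ with $x_0^{d}\sigma x_0^{-d}=\sigma$, equivalently $y^{d}=x_0^{d}$ in $G_X$. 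But that is exactly where your text stops: the closing-up statement is explicitly deferred (``once it is in hand\dots''), so what you have written is a correct reduction, not a proof. That deferred step is the entire content of the proposition, and it is a genuine gap.

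Worse, the gap cannot be filled, because the closing-up statement is false; your own criterion detects this. Take $X=(12)^{\Sym_4}$ and $x_0=(12)$. The element $y=(34)$ is a fixed point of $\varphi_{x_0}$, so $d=1$ and your criterion requires $y=x_0$ in $G_X$; but $X$ is a conjugation quandle, so $X\to G_X$ is injective and $(34)\neq(12)$ in $G_X$. Concretely, by Lemma~\ref{lem:Sn} we may identify $N_X\simeq\Alt_4$ and $N_0=\{\id,(12)(34)\}$; the coset over $(34)$ is $\{(13)(24),(14)(23)\}$, and conjugation by $x_0$ swaps these two elements, so no choice of representative closes up even this $1$-cycle, and no transversal can satisfy condition (2) read as equality of group elements (which is also how Lemma~\ref{lem:c-invariant} uses it). For comparison, the paper's own proof proceeds differently: it starts from an arbitrary transversal and iteratively replaces a representative $\sigma_\ell$ by a conjugate $\tau=x_0^{t}\trid\sigma_j$ lying in its coset, claiming the counter $t_\ell$ strictly decreases and, at a key point, that $\ell\neq j$. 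On the example above that assertion fails ($\tau=x_0\trid\sigma_1$ lies in $\sigma_1N_0$), the replacement merely exchanges $(13)(24)$ and $(14)(23)$, and the procedure never terminates. So the obstruction you located is real and afflicts the statement itself, not merely your argument; the later results are insulated from it only because the proofs of Lemma~\ref{lem:la_locura} and Theorem~\ref{thm:reconstruction} use nothing beyond conditions (1), (3) and the fact that $N_0$ centralizes $x_0$.
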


\pf
Fix a decomposition into cosets $N_X=\bigsqcup_{i=0}^k \sigma_iN_0$. 
Recall from \eqref{eqn:sigma(n)} and \eqref{eqn:c(n)}
the definition of the corresponding assignments 
\[
\sigma:N\to\{\sigma_0,\dots,\sigma_k\} \quad  \text{ and } \quad  c:N\to N_0.
\]

Since
$\sigma_0=1$, (1) holds. Condition (3) also holds trivially: If $x\in X$, there
is $n\in N_X$ is such $n\trid x_0=x$ by Corollary~\ref{cor:semidirecto}.  Now,
there is $j\in\{0,\dots,k\}$ such that $n\in \sigma_jN_0$, that is
$n=\sigma_jn_0$ for some $n_0\in N_0$.  Then $x=n\trid x_0=\sigma_j\trid
(n_0\trid x_0)=\sigma_j\trid x_0$.

For Condition (2), set $S=\{\sigma_1,\dots,\sigma_k\}$. We define
\begin{align*}
	t_j= t_j(S)&\coloneqq\min\{t\geq 1:x_0^t\trid\sigma_j=\sigma_j\}
\end{align*}
for all $j\in\{1,\dots,k\}$. Observe that $1\leq t_j(S)\leq \ord \varphi_{x_0}$, cf.~\S \ref{sec:racks}. For $i\in\{1,\dots,k\}$ and
$t\in\{0,\dots,t_j(S)-1\}$ we define $\tau_{j,t}=x_0^t\trid\sigma_j$ and let 
\[
T=\{\tau_{j,t}:1\leq j\leq k,\;1\leq t< t_j(S)\}.
\]
It is clear that $S\subseteq T$, as $\sigma_j=\tau_{j,0}$ by definition, and that if $S=T$, then we are done.
Notice that this is not a multi-set: we may have $\tau_{j,t}=\tau_{j',t'}$, for different $(j,t), (j',t')$. On the other hand, if $t\neq t'$, then $\tau_{j,t}\neq \tau_{j,t'}$ for  every $j$, since $t<t_j(S)$. In other words, there are $r\leq k$,  $1=i_1<i_2<\dots <i_r$ and $s_j\leq t_{i_j}$, $1\leq j\leq r$ such that
\[
T=\{\tau_{i_j,t}:1\leq j\leq r,\;1\leq t< s_j \}
\]
and $\tau_{i_j,t}\neq \tau_{i_{j'},t'}$ if $j\neq j'$ or $t\neq t'$. We reorder the set $S$ so $i_j=j$, $j=1,\dots,r$. If $S\neq T$, then we proceed inductively:  we order $T$ by:
\[
\tau_{i,s}\prec\tau_{j,t}\Longleftrightarrow  i<j\text{ or }i=j\text{ and }s<t.
\]
Let $\tau=\min\{\tau_{j,t}:\tau_{j,t}\not\in S\}$ and let $\ell$ be such that
$\sigma(\tau)=\sigma_\ell$, i.e.~$\tau_{j,t}=x_0^t\trid \sigma_j\in\sigma_\ell N$ and $\tau_{j,t}\neq \sigma_\ell$. Observe that if $\tau=\tau_{j,t}$, then $\ell\ne
j$.  Set $S_0=S$ and $T_0=T$. We make a new choice of representatives replacing
the original set $S_0$ by 
\[
S_1=\left(S_0\setminus\{\sigma_\ell\}\right)\cup\{\tau\}=\{\sigma_1,\dots,\sigma_{\ell-1},\tau,\sigma_{\ell+1},\dots,\sigma_k \}.
\]
Define $t_j(S_1)$ and $(T_1,\prec)$ accordingly. We claim that $t_j(S_1)\leq t_j(S_0)$
for all $j$. Indeed, equality holds if $j\ne\ell$ and it readily follows that 
\[
t_\ell(S_1)=t_\ell(S_0)-t<t_\ell(S_0). 
\]
In particular, it follows that $|S|=|S_1|\leq |T_1|<|T_0|$. (This also follows as when
constructing $T_1$ we are removing all the $\tau_{\ell,t}$.) If $T_1=S_1$, then
we are done. Otherwise, we repeat this procedure until we end up with $S_p=T_p$
for some $p>1$.  Then $S_p$ becomes the set of representatives we searched for.
\epf

We say that a decomposition of $N_X$ into $N_0$-cosets satisfying the conditions
in Proposition~\ref{pro:cosets} is \emph{good}. 

\medbreak

If $N_X=\bigsqcup_{i=0}^k \sigma_iN_0$ is a good decomposition, then for each $y\in X$ we set 
\begin{equation}
\label{eqn:sigma_y}
\sigma_y\coloneqq\sigma_j.
\end{equation}
for $j\in\{0,\dots,k\}$ such that $\sigma_j\trid x_0=y$.


\begin{lem}\label{lem:c-invariant}
	If $N_X=\bigsqcup_{i=0}^k \sigma_iN_0$ is good, then 
	\begin{align*}
		c(x_0\trid n)=c(n).
	\end{align*}
\end{lem}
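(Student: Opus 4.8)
The plan is to read $x_0\trid n$ as conjugation $x_0 n x_0^{-1}$ in $G_X$ --- this is how $\trid$ extends from $X$ to the whole enveloping group, since $xyx^{-1}=x\trid y$ for $x,y\in X$ by the defining relation --- and then to compute $c(x_0 n x_0^{-1})$ by conjugating the canonical factorization $n=\sigma(n)c(n)$ of \eqref{eqn:c(n)}. Conjugation by $x_0$ is an automorphism of $N_X$ (as $N_X$ is normal in $G_X$), so it suffices to track its effect on the representative part $\sigma(n)$ and on the $N_0$-part $c(n)$ separately.

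The observation I would isolate first is that $N_0$ lies inside the centralizer of $x_0$, i.e.\ $N_0=N_X\cap C_{G_X}(x_0)$. Indeed, for $h\in G_X$ the action on $x_0\in X$ is realized by conjugation, $h\cdot x_0=hx_0h^{-1}$ as elements of $X\subseteq G_X$; hence $h$ stabilizes $x_0$ if and only if $hx_0h^{-1}=x_0$, that is, if and only if $h$ commutes with $x_0$. In particular every $n_0\in N_0$ satisfies $x_0 n_0 x_0^{-1}=n_0$, so conjugation by $x_0$ fixes $N_0$ pointwise.

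With this in hand the computation is short. Writing $n=\sigma(n)c(n)$ with $c(n)\in N_0$, I conjugate to obtain
\[
x_0 n x_0^{-1}=\bigl(x_0\,\sigma(n)\,x_0^{-1}\bigr)\bigl(x_0\,c(n)\,x_0^{-1}\bigr)=\bigl(x_0\trid\sigma(n)\bigr)\,c(n),
\]
where the second factor is unchanged by the previous paragraph. By goodness condition~(2) of Proposition~\ref{pro:cosets}, the element $x_0\trid\sigma(n)=x_0\,\sigma(n)\,x_0^{-1}$ is again one of the representatives $\sigma_0,\dots,\sigma_k$. Thus $x_0 n x_0^{-1}=\sigma_j\,c(n)$ exhibits a factorization of $x_0\trid n$ into a representative times an element of $N_0$; comparing it with $x_0\trid n=\sigma(x_0\trid n)\,c(x_0\trid n)$ and invoking the uniqueness of such a decomposition recorded in Remark~\ref{rem:c_casi_es_morfismo} yields $c(x_0\trid n)=c(n)$, as claimed.

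The only genuinely non-formal point is the centralizer identity $N_0=N_X\cap C_{G_X}(x_0)$; once the action of $G_X$ on $X$ is recognized as conjugation landing in the generating set $X$, everything else is a matter of matching the two factorizations, and I expect no real obstacle beyond this identification.
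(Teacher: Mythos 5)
Your proof is correct and follows essentially the same route as the paper: the paper's one-line argument likewise writes $x_0\trid n=(x_0\sigma(n)x_0^{-1})\,c(n)$, using that conjugation by $x_0$ fixes $c(n)\in N_0$ and sends $\sigma(n)$ to another representative by goodness condition (2), and then concludes by uniqueness of the coset factorization.

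One caveat: the identity you isolate, $N_0=N_X\cap C_{G_X}(x_0)$, is stronger than what holds in general. The inclusion $N_0\subseteq C_{G_X}(x_0)$ --- the only thing your computation uses --- is valid: from $n_0\cdot x_0=x_0$ and the equivariance $\iota(g\cdot y)=g\,\iota(y)\,g^{-1}$ of the canonical map $\iota\colon X\to G_X$ one gets $n_0\,\iota(x_0)\,n_0^{-1}=\iota(x_0)$. But the reverse inclusion requires $\iota$ to be injective, which can fail; this is precisely the point of Lemma~\ref{lem:N0} and the example following it, where the stabilizer $N_0$ is strictly smaller than the corresponding centralizer intersection. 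Your phrase ``as elements of $X\subseteq G_X$'' silently assumes injectivity, so the ``if and only if'' should be weakened to ``only if''. Since only that forward implication enters the argument, the proof stands as written.
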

\pf
	Indeed, $x_0\trid n=x_0\sigma(n)x_0^{-1}c(n)$, as $c(n)\in N_0$ and
	$x_0\sigma(n)x_0^{-1}=\sigma_i$, for some $i\in\{0,\dots,k\}$.
\epf

Recall the definition of the group homomorphism $\d:G_X\to\Z$ from
\eqref{eqn:d}.  
\begin{lem}
	\label{lem:la_locura}
	For each $u\in G_X$ and $y\in X$, 
	\[
	\sigma_{u\trid y}=
	u\sigma_yx_0^{-\d(u)}
	c\left(u\sigma_yx_0^{-\d(u)}\right)^{-1}.
	\]
	In particular if $n\in N$, then $\sigma_{n\trid y}=
		\sigma\left(n\sigma_y\right)$.
\end{lem}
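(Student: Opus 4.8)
The plan is to reduce the general formula to the single identity $\sigma_{u\trid y}=\sigma(w)$ for the element $w\coloneqq u\sigma_y x_0^{-\d(u)}$, and then to verify this by evaluating the action of $w$ on $x_0$. First I would check that $w$ lies in $N$: since $\d$ is a homomorphism with $\d(x_0)=1$ and $\d(\sigma_y)=0$ (because $\sigma_y\in N$), one computes $\d(w)=\d(u)+0-\d(u)\d(x_0)=0$, so $w\in\ker\d=N$. Consequently $w=\sigma(w)c(w)$ with $c(w)\in N_0$ by the very definitions \eqref{eqn:sigma(n)}--\eqref{eqn:c(n)}, so that $w\,c(w)^{-1}=\sigma(w)$; the asserted formula is thus exactly $\sigma_{u\trid y}=\sigma(w)$.

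Next I would compute $w\trid x_0$. Because $x_0\trid x_0=x_0$, the generator $x_0$ fixes $x_0$ under the action, hence so does every power, giving $x_0^{-\d(u)}\trid x_0=x_0$. Using that the action is a group action together with $\sigma_y\trid x_0=y$, I then obtain $w\trid x_0=u\trid\bigl(\sigma_y\trid(x_0^{-\d(u)}\trid x_0)\bigr)=u\trid(\sigma_y\trid x_0)=u\trid y$. Since $c(w)\in N_0$ stabilizes $x_0$, it follows that $\sigma(w)\trid x_0=\sigma(w)\trid(c(w)\trid x_0)=w\trid x_0=u\trid y$.

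Finally I would invoke the unambiguity of the symbol $\sigma_{(-)}$ from \eqref{eqn:sigma_y}. The representatives $\sigma_0,\dots,\sigma_k$ lie in distinct $N_0$-cosets, and if $\sigma_i\trid x_0=\sigma_j\trid x_0$ then $\sigma_j^{-1}\sigma_i\in N_0$, forcing $i=j$; combined with condition (3) of a good decomposition (or with the transitivity of the $N$-action in Corollary~\ref{cor:semidirecto}), this shows $i\mapsto\sigma_i\trid x_0$ is a bijection onto $X$. Hence there is a \emph{unique} representative sending $x_0$ to $u\trid y$, and since $\sigma(w)$ is one such representative, I conclude $\sigma(w)=\sigma_{u\trid y}$, which is the claim. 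For the particular case $n\in N$, one has $\d(n)=0$, so $x_0^{-\d(n)}=1$ and $w=n\sigma_y$; the formula then reads $\sigma_{n\trid y}=\sigma(n\sigma_y)$.

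The main obstacle is not any individual computation — each step is a short unwinding of the definitions of $\sigma(\cdot)$, $c(\cdot)$ and the degree map $\d$ — but rather bookkeeping the interaction between the group multiplication and the quandle action $\trid$, and in particular justifying that $\sigma_{u\trid y}$ is well defined so that the identification $\sigma(w)=\sigma_{u\trid y}$ is legitimate. Everything else follows directly once $w$ is seen to lie in $N$ and to satisfy $w\trid x_0=u\trid y$.
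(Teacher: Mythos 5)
Your proof is correct and follows essentially the same route as the paper: both arguments show $w=u\sigma_yx_0^{-\d(u)}$ lies in $N$, verify $w\trid x_0=u\trid y$ using $x_0\trid x_0=x_0$, conclude $\sigma_{u\trid y}=\sigma(w)$, and then unwind $w=\sigma(w)c(w)$. The only difference is that you spell out two points the paper leaves implicit — the degree computation showing $w\in\ker\d=N$ and the uniqueness of the representative defining $\sigma_{u\trid y}$ — which is a welcome clarification but not a different proof.
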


\begin{proof}
Since 
\[
\sigma_{u\trid y}\trid x_0=u\trid y=u\trid(\sigma_y\trid x_0)=(u\sigma_y)\trid x_0=(u\sigma_yx_0^{-\d(u)})\trid x_0
\]
and $u\sigma_yx_0^{-\d(u)}\in N$, it follows that $\sigma_{u\trid y}=\sigma(u\sigma_yx_0^{-\d(u)})$. Then 
\[
	u\sigma_yx_0^{-\d(u)}
	=
	\sigma_{u\trid y}
	c\left(u\sigma_yx_0^{-\d(u)}\right),
\]
and the first claim follows. If $n\in N$, then $\d(n)=0$ and therefore it follows that $\sigma_{n\trid y}=
	n\sigma_yc\left(n\sigma_y\right)^{-1}=\sigma\left(n\sigma_y\right)$ \emph{cf.} \eqref{eqn:c(n)}.
\end{proof}

We can now proceed to prove Theorem~\ref{thm:reconstruction}.

\begin{proof}[Proof of Theorem~\ref{thm:reconstruction}]
We need to define an inverse to the map
\eqref{eqn:correspondence-proof}. Fix $a\in A$, $g\in\Hom(N_0,A)$ and set
$f:G\to M$ as 
\begin{align*}
	f(u)(y)\coloneqq\d(u)\,a+g\left(c(u\sigma_yx_0^{-\d(u)})\right),
\end{align*}
for each $u\in G$.  We show that $f\in Z^1(G,M)$ and $f\mapsto (a,g)$ via
\eqref{eqn:correspondence-proof}. 

On the one hand, as $\sigma_{x_0}=\sigma_0=1$, 
\begin{align*}
 f(x_0)(x_0)=a+g(c(x_0x_0^{-1}))=a.
\end{align*}
On the other, if $n_0\in N_0$, then $\d(n_0)=0$ and thus
\begin{align*}
 f_0(n_0)=f(n_0)(x_0)=g(c(n_0))=g(n_0).
\end{align*}
Now we check the cocycle condition. First, 
\[
	 f(uu')(y)=\d(uu')a+g\left(c(uu'\sigma_yx_0^{-\d(uu')})\right).
\]
Second, 
\begin{align*}
 (f(u)\cdot u')(y)&+f(u')(y)=f(u)(u'\trid y)+f(u')(y)\\
 &=\d(u)a+g(c(u\sigma_{u'\trid y}x_0^{-\d(u)}))
 +\d(u')a+g(c(u'\sigma_yx_0^{-\d(u')}))\\
 &=f(uu')(y), 
 \end{align*}
since $A$ is abelian, $\d$ and $g$
are a group homomorphisms and 
\[
	c\left(u\sigma_{u'\trid y}x_0^{-\d(u)}\right)=c\left(uu'\sigma_yx_0^{-\d(uu')}\right)c\left(u'\sigma_yx_0^{-\d(u')}\right)^{-1}
\]
by Lemma~\ref{lem:la_locura}. Hence $f\in Z^1(G,M)$.
\end{proof}

\section{Applications}
\label{sec:applications}

Our method for computing the 2nd cohomology group of an indecomposable quandle
$X$ involves the group $N_0$, see~\S\ref{subsection:basics}. In several important cases,
this group can be obtained applying the following lemma.
\begin{lem}
    \label{lem:N0}
    Let $X$ be a finite indecomposable quandle and $x_0\in X$. Assume that the
    canonical map $X\to G_X$ is injective.  Then 
    \[
    N_0\simeq [F_X,F_X]\cap C_{F_X}(\psi(x_0)),
    \]
	where $\psi\colon X\to G_X\to F_X$ is the composition of the canonical maps
	and $C_{F_X}(\psi(x_0))$ is the centralizer of $\psi(x_0)$
	in $F_X$.
\end{lem}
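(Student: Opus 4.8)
The plan is to translate the condition defining $N_0$ --- that $n\in N_X$ fixes $x_0$ under the quandle action --- into a group-theoretic centralizer condition, first inside $G_X$ and then inside $F_X$. Write $p\colon G_X\to F_X$ for the canonical projection, so that $Z_X=\ker p$ and $\psi$ is the composite of the canonical map $X\to G_X$ with $p$. The mechanism making everything run is the observation that the defining relations of $G_X$ yield $g\,y\,g^{-1}=g\trid y$ for every $g\in G_X$ and every generator $y\in X$; that is, conjugation in $G_X$ realizes the $G_X$-action on $X$ at the level of generators.

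First I would record the elementary dictionary $N_0=N_X\cap C_{G_X}(x_0)$. Indeed, for $n\in N_X$ the element $n\,x_0\,n^{-1}$ is the generator corresponding to the point $n\trid x_0$; since the canonical map $X\to G_X$ is injective, this generator equals $x_0$ if and only if $n\trid x_0=x_0$. Hence $n$ stabilizes $x_0$ precisely when $n\in C_{G_X}(x_0)$. This is the single place where the hypothesis that $X\to G_X$ be injective is used.

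Next I would push this description through $p$. Lemma~\ref{lem:isoclinic} gives $N_X\simeq[F_X,F_X]$; since $N_X$ is finite and $p(N_X)=p([G_X,G_X])=[F_X,F_X]$, the surjection $p|_{N_X}\colon N_X\to[F_X,F_X]$ is a bijection, whence $N_X\cap Z_X=1$. Applying $p$ to $N_0=N_X\cap C_{G_X}(x_0)$ gives at once $p(N_0)\subseteq[F_X,F_X]\cap C_{F_X}(\psi(x_0))$, and $p|_{N_0}$ is injective, so only the reverse inclusion remains. For that, take $\bar g\in[F_X,F_X]\cap C_{F_X}(\psi(x_0))$ and let $n\in N_X$ be its unique preimage under the bijection $p|_{N_X}$. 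Since $\bar g$ centralizes $\psi(x_0)=p(x_0)$, the commutator $[n,x_0]$ lies in $\ker p=Z_X$; but also $[n,x_0]\in[G_X,G_X]=N_X$, so $[n,x_0]\in N_X\cap Z_X=1$. Thus $n\in C_{G_X}(x_0)$, hence $n\in N_0$ by the first step, and $\bar g=p(n)\in p(N_0)$. Combined with the injectivity of $p|_{N_0}$ this yields $N_0\cong p(N_0)=[F_X,F_X]\cap C_{F_X}(\psi(x_0))$.

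I expect the only genuinely delicate point to be the careful use of the conjugation-versus-action dictionary $g\,y\,g^{-1}=g\trid y$ together with the injectivity of $X\to G_X$ in the first step. Once $N_0=N_X\cap C_{G_X}(x_0)$ is established, the passage to $F_X$ is forced by the identity $N_X\cap Z_X=1$ extracted from Lemma~\ref{lem:isoclinic}; in particular the argument needs \emph{no} separate injectivity statement for $\psi$, which is worth flagging since a naive approach would try (and fail) to deduce $n\trid x_0=x_0$ from $\psi(n\trid x_0)=\psi(x_0)$ directly.
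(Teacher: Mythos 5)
Your proof is correct, and it is worth noting how it differs from the paper's. The paper's proof is a three-line sketch: it uses injectivity plus indecomposability to identify $X$ with the conjugacy class of $x_0$ in $G_X$, then cites an external result (\cite[Lemma 1.8]{MR3276225}) to identify $X$ with the conjugacy class of $\psi(x_0)$ in $F_X$, and finally invokes Lemma~\ref{lem:isoclinic} to conclude that the stabilizer $N_0$ matches $[F_X,F_X]\cap C_{F_X}(\psi(x_0))$. You reach the same endpoint by a more self-contained route: you first prove the dictionary $N_0=N_X\cap C_{G_X}(x_0)$ inside $G_X$ (correctly isolating the one place where injectivity of $X\to G_X$ is needed, via the equivariance $g\,y\,g^{-1}=g\trid y$), and then replace the citation of \cite[Lemma 1.8]{MR3276225} by a counting argument: Lemma~\ref{lem:isoclinic} gives $|N_X|=|[F_X,F_X]|<\infty$, the projection $p$ maps $N_X$ onto $[F_X,F_X]$, so $p|_{N_X}$ is a bijection and $N_X\cap Z_X=1$; the element-wise transfer of the centralizer condition (pulling $[n,x_0]$ into $N_X\cap Z_X$) then closes the argument. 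What your approach buys is independence from the external conjugacy-class identification and an explicit verification of the compatibility that the paper leaves implicit, namely that the abstract isomorphism $N_X\simeq[F_X,F_X]$ is realized by $p$ itself; what the paper's approach buys is brevity and a conceptual picture ($X$ sits as a conjugacy class in the finite group $F_X$, so the stabilizer is visibly a centralizer). Both proofs are sound.
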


\begin{proof}
    Since $X\to G_X$ is
	injective and $X$ is indecomposable, $X$ can be identified
	with the conjugacy class of $x_0$ in $G_X$. By~\cite[Lemma 1.8]{MR3276225},
	$X$ can also be identified with the conjugacy class of $\psi(x_0)$ in
	$F_X$. From Lemma~\ref{lem:isoclinic} one obtains that
	$N_X=[G_X,G_X]\simeq[F_X,F_X]$ and thus the claim follows.
\end{proof}

\begin{rem}
    If $X$ is a conjugation quandle, then the canonical map $X\to G_X$
    is injective. Thus Lemma~\ref{lem:N0} gives a nice description of 
    $N_0$ in the case of finite indecomposable conjugation quandles.
\end{rem}

\begin{exa}
    The claim of Lemma~\ref{lem:N0} does not hold for arbitrary quandles. 
	Let $X$ be the quandle $\{x_1,x_2,x_3,x_4\}$ with the structure given by 
	\begin{align*}
		\varphi_{x_1}=(x_2x_3x_4), &&
		\varphi_{x_2}=(x_1x_4x_3), &&
		\varphi_{x_3}=(x_1x_2x_4), &&
		\varphi_{x_4}=(x_1x_3x_2).
	\end{align*}
	This quandle is isomorphic to the conjugacy class of $3$-cycles in $\Alt_4$. 
	Let $f\colon X\times X\to\C^\times$ be the map
    given by
    \[
        f(x,y)=\begin{cases}
            1 & \text{if $x=x_1$ or $y=x_1$ or $x=y$,}\\
            -1 & \text{otherwise}.
        \end{cases}
    \]
    Then $f$ is a $2$-cocycle of $X$ with values in $\{-1,1\}\simeq\Z_2$, see
    ~\cite[Example 2.2]{MR1994219}. Let $Y=X\times\{-1,1\}$ be the
    quandle given by 
    \[
        (x,i)\trid (y,j)=(x\trid y,jf(x,y)),\quad
		x,y\in X,\,i,j\in\{-1,1\}.
    \]
	Then the canonical map $Y\to G_Y$ is not injective. Indeed, 
	\[
		(x_2,1)(x_3,-1)=(x_1,1)(x_2,1)=(x_3,1)(x_1,1)=(x_2,1)(x_3,1)
	\]
	implies that 
	$(x_3,-1)=(x_3,1)$ in $G_{Y}$. 

	Fix $y_0\in Y$. A straighforward calculation shows that $F_Y\simeq\SL(2,3)$
	and $[F_Y,F_Y]\cap C_{F_Y}(\psi(y_0))\simeq\Z_2$. However, since
	$[F_Y,F_Y]$ and $Y$ both have eight elements, $N_0$ is the trivial group. 
\end{exa}

\subsection{Transpositions in $\Sym_n$}

Let $X=(12)^{\Sym_n}$ be the quandle of transpositions in the symmetric group
$\Sym_n$.  For $n\geq 4$ a non-constant 2-cocycle $\chi\in H^2(X,\C^\times)$
was constructed in \cite{MR1800714}. This cocycle is given by 
\begin{align}\label{eqn:MS}
	\chi(\sigma,\tau)&=\begin{cases}
		1 & \text{if $\sigma(i)<\sigma(j)$},\\
		-1 & \text{otherwise},
	\end{cases}
\end{align}
where $\tau=(ij)$, $1\leq i<j\leq n$.

\begin{lem}\label{lem:Sn}
	Let $X=(12)^{\Sym_n}$, $n\geq 4$, and fix $x_0=(12)\in X$. 
	\begin{enumerate}
		\item $F_X\simeq \Sym_n$. Hence $N_X\simeq\Alt_n$.
        \item $N_0\simeq \Z_2\ltimes\Alt_{n-2}$. In particular,
            $N_0/[N_0,N_0]\simeq\Z_2$.
	\end{enumerate}
\end{lem}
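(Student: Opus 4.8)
The plan is to prove Lemma~\ref{lem:Sn} in two parts, handling the identification of $F_X$ and $N_X$ first, and then computing the stabilizer $N_0$ via Lemma~\ref{lem:N0}.

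For part (1), the plan is as follows. The quandle $X=(12)^{\Sym_n}$ is a conjugation quandle, so by the remark following Lemma~\ref{lem:N0} the canonical map $X\to G_X$ is injective. I would first recall that the enveloping group $G_X$ comes with its generators $x\in X$ subject only to the relations $xy=(x\trid y)x$, which here read $\sigma\tau=(\sigma\tau\sigma^{-1})\sigma$ for transpositions. Since every $\varphi_x$ is an involution (conjugation by a transposition squares to the identity on $X$), we have $n_x=\ord\varphi_x=2$ for every $x\in X$, so $Z_X=\langle x^2:x\in X\rangle$ and $F_X=G_X/Z_X$ is generated by the images of the transpositions, each of order dividing $2$, satisfying the conjugation relations. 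The claim $F_X\simeq\Sym_n$ then follows by exhibiting the Coxeter-type presentation of $\Sym_n$ on transpositions and checking it matches the presentation of $F_X$; concretely, there is a surjection $F_X\to\Sym_n$ sending each generator to the corresponding transposition, and one verifies it is injective. Once $F_X\simeq\Sym_n$ is established, Lemma~\ref{lem:isoclinic} gives $N_X\simeq[F_X,F_X]=[\Sym_n,\Sym_n]=\Alt_n$.

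For part (2), the plan is to apply Lemma~\ref{lem:N0} directly. Since $X\to G_X$ is injective, the lemma yields
\[
N_0\simeq[F_X,F_X]\cap C_{F_X}(\psi(x_0))\simeq\Alt_n\cap C_{\Sym_n}((12)).
\]
Thus I would compute the centralizer of the transposition $(12)$ in $\Sym_n$: it consists of permutations stabilizing the set $\{1,2\}$ and permuting $\{3,\dots,n\}$ freely, so $C_{\Sym_n}((12))\simeq\langle(12)\rangle\times\Sym_{\{3,\dots,n\}}\simeq\Z_2\times\Sym_{n-2}$. Intersecting with $\Alt_n$ picks out the even permutations in this product: an element $(12)^{\eps}\rho$ with $\rho\in\Sym_{n-2}$ is even precisely when $\eps+\operatorname{sgn}(\rho)$-parity is even, i.e.\ the sign of $\rho$ is determined by $\eps$. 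This intersection is the index-two subgroup $\{(\eps,\rho):\eps=\operatorname{sgn}(\rho)\}$ of $\Z_2\times\Sym_{n-2}$, which is isomorphic to $\Z_2\ltimes\Alt_{n-2}$ (the nontrivial element $((12),\tau)$ with $\tau$ a transposition in $\Sym_{n-2}$ acts on the normal copy $\Alt_{n-2}$). Finally, abelianizing, $[N_0,N_0]$ contains $[\Alt_{n-2},\Alt_{n-2}]=\Alt_{n-2}$ for $n-2\geq 5$, and the quotient $N_0/[N_0,N_0]\simeq\Z_2$; the small cases $n-2\in\{2,3,4\}$ I would check separately, but the semidirect structure already forces the abelianization to be $\Z_2$.

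The main obstacle I anticipate is part (1), specifically verifying that $F_X\simeq\Sym_n$ rather than merely admitting a surjection onto it. Showing surjectivity is immediate, but injectivity requires checking that the defining relations of $F_X$ (the images of the conjugation relations together with the order-two relations) suffice to recover the full braid/Coxeter relations of $\Sym_n$ — i.e.\ that no extra collapsing occurs and that the relation $\sigma\tau\sigma^{-1}=(\sigma\trid\tau)$ in $F_X$ is strong enough to yield the standard presentation. This is essentially a presentation-matching argument and is the delicate point; the centralizer computation in part (2) is then routine once part (1) is in hand.
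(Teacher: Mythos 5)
Your proposal is correct, and in part (2) it takes a genuinely different route from the paper. Part (1) is essentially the paper's own argument: the paper proves $F_X\simeq\Sym_n$ by presentation matching, getting the surjection $\phi\colon F_X\twoheadrightarrow\Sym_n$ from the universal property of $G_X$ (the squares die since $n_x=2$) and the inverse surjection $\psi\colon\Sym_n\twoheadrightarrow F_X$ by checking the Coxeter relations among the images of the simple transpositions; the point you flag as delicate is handled exactly this way, using that in $F_X$ one has $\sigma^2=1$ and $\sigma\tau\sigma^{-1}=\sigma\trid\tau$, so both sides of the braid relation for $\sigma_i,\sigma_{i+1}$ equal the image of the transposition $(i\;i{+}2)$, and the disjoint-support relations become commutation. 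In part (2), however, the paper never invokes Lemma~\ref{lem:N0}: it exhibits the subgroup $\langle t\rangle\ltimes\Alt_{n-2}\leq N_0$ with $t=(12)(34)$ and $\Alt_{n-2}$ the even permutations of $\{3,\dots,n\}$, and then forces equality by counting, since $N_X\simeq\Alt_n$ acts transitively on $X$ (the $\Alt_n$-conjugates of $(12)$ exhaust $(12)^{\Sym_n}$), so orbit--stabilizer gives $|N_0|=|\Alt_n|/|X|=(n-2)!$, which is the order of the exhibited subgroup. Your route instead identifies $N_0\simeq[F_X,F_X]\cap C_{F_X}(\psi(x_0))=\Alt_n\cap C_{\Sym_n}((12))$ via Lemma~\ref{lem:N0} (legitimate: the paper's remark after that lemma covers conjugation quandles) and computes the centralizer directly. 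Both work; the paper's version is self-contained and avoids the centralizer analysis, while yours pinpoints $N_0$ inside $\Sym_n$ with no counting and exposes the group structure immediately.

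One refinement closes the only loose step in your write-up. For $\eps\in\{0,1\}$ and $\rho\in\Sym_{n-2}$, the element $(12)^{\eps}\rho$ is even iff $\operatorname{sgn}(\rho)=(-1)^{\eps}$, so $\eps$ is determined by $\rho$ and the even part of $\langle(12)\rangle\times\Sym_{n-2}$ projects isomorphically onto $\Sym_{n-2}$ via the second coordinate. Hence $N_0\simeq\Sym_{n-2}$ and $(N_0)_{\ab}\simeq\Sym_{n-2}/\Alt_{n-2}\simeq\Z_2$ uniformly for all $n\geq4$, with no case analysis. By contrast, your phrase that ``the semidirect structure already forces the abelianization to be $\Z_2$'' is not literally true: a semidirect product with trivial action, e.g.\ $\Z_2\times\Alt_3$, has abelianization $\Z_2\times\Z_3$. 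What forces the answer is the specific action (conjugation by a transposition), i.e.\ precisely the identification $N_0\simeq\Sym_{n-2}$ together with $[\Sym_m,\Sym_m]=\Alt_m$ for all $m$. (The paper's own justification is similarly terse here: for $n-2\in\{3,4\}$ one needs the mixed commutators $[t,\Alt_{n-2}]$, not just $[\Alt_{n-2},\Alt_{n-2}]$, to generate $\Alt_{n-2}$.)
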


\begin{proof}
	Recall that $\Sym_n=\langle \sigma_1,\dots,
	\sigma_{n-1}\rangle$ with relations
	\begin{align*}
		&\sigma_i\sigma_{i+1}\sigma_i=\sigma_{i+1}\sigma_i\sigma_{i+1}, && 1\leq 
		i<n-1,\\
		&\sigma_k\sigma_j=\sigma_j\sigma_k, && 1\leq j,k< n, \, |j-k|>1,\\
		&\sigma_i^2=1, &&1\leq i< n.
	\end{align*}
	Set $\iota:X\hookrightarrow \Sym_n$ the canonical inclusion let
	$\varphi:\langle X\rangle\to \Sym_n$ the unique group homomorphism with
	$\varphi_{|X}=\iota$. This is in fact an epimorphism.  Observe that
	$\varphi(x^2)=\iota(x)^2=1$ and
	$$
	\varphi(xy)=\iota(x)\iota(y)=\iota(x)\iota(y)\iota(x)^{-1}\iota(x)=\iota(x\trid y)\iota(x)=\varphi((x\trid y) x).\\
	$$
	Thus, $\varphi$ factors through $\phi:F_X\twoheadrightarrow \Sym_n$. Now,
	set $S$ be the free group on $s_1,\dots,s_{n-1}$ and let $\psi':S\to F_X$
	be the group epihomomorphism given by $s_i\mapsto (i\,i+1)$.  Now $\psi'$
	factors through $\psi:\Sym_n\twoheadrightarrow F_X$ and it is clear that
	$\phi$ and $\psi$ are inverses to each other.

	Let us prove the second claim.  By the first part, we identify $N$ with
	$\Alt_n$.  Consider $\Alt_{n-2}\leq \Alt_n$ as those permutations fixing 1
	and 2 and set $t=(12)(34)$. Then $t\sigma t^{-1}\in\Alt_{n-2}$ for all
	$\sigma\in\Alt_{n-2}$. Clearly $\langle t\rangle\ltimes
	\Alt_{n-2}\leq N_0$. 

	Since $\Alt_n$ is generated by $\{(34\ell)\mid 1\leq\ell\leq n,\;\ell\neq
	3,4\}$, the group $N$ is generated by the subgroups $\Alt_{n-2}$ and
	$\Alt_{4}\simeq\langle (134),(234)\rangle$.  Notice that $\langle (134),(234)\rangle\cap 
	N_0\simeq\langle t\rangle$. 
	We have $|\langle t\rangle\ltimes\Alt_{n-2}|=(n-2)!$ and 
	\[
	\{\sigma (12)\sigma^{-1}:\sigma\in\Alt_n\}=(12)^{\Sym_n}.
	\]
	Thus $|N_0|=|N|/|(12)^{\Sym_n}|=(n-2)!$ and hence $N_0=\langle
	t\rangle\ltimes\Alt_{n-2}$. 

   Finally, since the commutator subgroup of some group $A\ltimes B$ is
    the group generated by $[A,A]\cup [A,B]\cup [B,B]$ and $N_0=\langle
    t\rangle\ltimes\Alt_{n-2}$, it follows that $[N_0,N_0]\simeq\Alt_{n-2}$ 
    and hence $N_0/[N_0,N_0]\simeq\Z_2$.
\end{proof}

\begin{thm}
    \label{thm:FK}
    Let $n\geq4$ and $X=(12)^{\Sym_n}$ be the conjugacy class of
    transpositions. Then
    $H^2(X,\C^\times)\simeq\C^\times\times\langle\chi\rangle$.
\end{thm}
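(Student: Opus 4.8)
The plan is to combine the two main results established earlier. By Theorem~\ref{thm:main}, for any finite indecomposable quandle $X$ and abelian group $A$ with trivial $G_X$-action, we have an isomorphism
\[
H^2(X,A)\simeq A\times\Hom(N_0,A).
\]
Applying this to $X=(12)^{\Sym_n}$ with $A=\C^\times$, the task reduces to computing $\Hom(N_0,\C^\times)$. Here Lemma~\ref{lem:Sn} does the decisive work: it identifies $N_0\simeq\Z_2\ltimes\Alt_{n-2}$ and, crucially, computes the abelianization $N_0/[N_0,N_0]\simeq\Z_2$. Since $\C^\times$ is abelian, every homomorphism $N_0\to\C^\times$ factors through the abelianization, so
\[
\Hom(N_0,\C^\times)\simeq\Hom(N_0/[N_0,N_0],\C^\times)\simeq\Hom(\Z_2,\C^\times)\simeq\Z_2\simeq\{\pm1\}.
\]
Hence $H^2(X,\C^\times)\simeq\C^\times\times\Z_2$, and I would argue that the free direct factor $\C^\times$ corresponds to the constant cocycles while the $\Z_2$ factor is generated by a non-constant cocycle.

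The next step is to exhibit an explicit generator of the $\Z_2$ factor and confirm it is the Mochizuki--Saito cocycle $\chi$ from~\eqref{eqn:MS}. First I would verify that $\chi$ is genuinely non-constant (immediate from its definition for $n\geq 4$) and that it is not a coboundary, so that its class is the nontrivial element of the torsion part $\Hom(N_0,\C^\times)$. Concretely, under the isomorphism of Theorem~\ref{thm:main} a cocycle $q$ maps to the pair $(q_{x_0,x_0},(f_q)_0)$, so I would check that $(f_\chi)_0$ is the nontrivial character of $N_0$; equivalently, using the reconstruction recipe~\eqref{eqn:cociclo2}, that $\chi$ is realized by the datum $(a,g)$ with $g$ the unique surjection $N_0\to\Z_2\hookrightarrow\C^\times$. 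Since $\langle\chi\rangle$ has order two, writing $H^2(X,\C^\times)\simeq\C^\times\times\langle\chi\rangle$ is then just a restatement of $\C^\times\times\Z_2$ with the torsion factor named by its explicit generator.

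I expect the main obstacle to be not the group-theoretic computation, which is already packaged cleanly in Lemma~\ref{lem:Sn}, but rather the bookkeeping needed to match the abstract torsion class to the concrete cocycle $\chi$ of~\eqref{eqn:MS}. Verifying that $\chi$ corresponds to the nontrivial character under the composite isomorphism requires tracking $\chi$ through the Etingof--Gra\~na correspondence~\eqref{eqn:fq}, evaluating $(f_\chi)_0$ on a generator of $N_0$ such as the element $t=(12)(34)$ from the proof of Lemma~\ref{lem:Sn}, and checking that the resulting value is $-1$ rather than $1$. This is a finite but delicate verification. An alternative route that sidesteps the explicit identification is to argue abstractly: since $H^2(X,\C^\times)$ is isomorphic to $\C^\times\times\Z_2$ and $\chi$ is a non-constant $2$-cocycle of order two (its values lie in $\{\pm1\}$), its class must generate the unique $\Z_2$ factor, giving $H^2(X,\C^\times)\simeq\C^\times\times\langle\chi\rangle$ directly.
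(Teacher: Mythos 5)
Your main route is the same as the paper's: Theorem~\ref{thm:main} together with Lemma~\ref{lem:Sn} reduces the theorem to showing that the class of $\chi$ projects onto the nontrivial character of $N_0$, and the check you defer as ``finite but delicate'' is exactly the one-line computation the paper performs. With $x_0=(12)$ and $t=(12)(34)\in N_0$, the recursion \eqref{eqn:fq} gives
\begin{align*}
(f_\chi)_0(t) &= f_\chi\bigl((12)(34)\bigr)\bigl((12)\bigr)
=\chi\bigl((12),(34)\trid(12)\bigr)\,\chi\bigl((34),(12)\bigr)\\
&=\chi\bigl((12),(12)\bigr)\,\chi\bigl((34),(12)\bigr)=-1,
\end{align*}
since $(34)\trid(12)=(12)$, while $\chi((12),(12))=-1$ and $\chi((34),(12))=1$ by \eqref{eqn:MS}. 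Hence $(f_\chi)_0$ generates $\Hom(N_0,\C^\times)\simeq\Z_2$, and the theorem follows.

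By contrast, the ``alternative route'' you offer to sidestep this verification has a genuine gap. In $\C^\times\times\Z_2$ there are three elements of order two, namely $(-1,1)$, $(1,g)$ and $(-1,g)$, where $g$ is the nontrivial character; the first of these lies inside the factor of constant classes (it is the image of the constant cocycle $-1$). So knowing that $\chi$ takes values in $\{\pm1\}$ and is non-constant \emph{as a function} does not force its class to generate the $\Z_2$ factor: a non-constant cocycle can perfectly well be cohomologous to a constant one, and since $\chi_{x_0,x_0}=\chi((12),(12))=-1$, the competing possibility that the class of $\chi$ equals that of the constant cocycle $-1$ (i.e. maps to $(-1,1)$ under \eqref{eqn:map}) is precisely what must be excluded. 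Within this framework the only way to exclude it is to evaluate $(f_\chi)_0$ on some element of $N_0$ --- that is, the displayed calculation above; the abstract order-two argument cannot replace it.
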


\begin{proof}
    Set $x_0=(12)\in\Sym_n$. 
    Since 
	$N_0\simeq\Z_2\ltimes\Alt_{n-2}$ and $N_0/[N_0,N_0]\simeq\Z_2$ by 
    Lemma~\ref{lem:Sn}, it follows that  
    $\Hom(N_0,\C^\times)\simeq\Z_2$. Applying the
    isomorphism~\eqref{eqn:map} of Theorem~\ref{thm:main} to the $2$-cocycle
    $\chi$ given in~\eqref{eqn:MS},
    \[
        \chi\mapsto (-1,(f_{\chi})_0),
    \]
    where $(f_{\chi})_0\colon N_0\to\C^\times$, $n_0\mapsto f_\chi(n_0)(x_0)$,
	$n_0\in N_0$. Now the claim follows since $(f_\chi)_0$ generates
	$\Hom(N_0,\C^\times)$. Indeed, $f_\chi\ne1$ since 
%
	\begin{align*}
		(f_\chi)_0((12)(34))&=f_\chi((12)(34))(12)\overset{\eqref{eqn:fq}}{=}\chi((12),(34)\trid (12))\chi((34),(12))\\
		&=\chi((12),(12))\chi((34),(12))=-1. 
	\end{align*}
	This completes the proof.
\end{proof}

\subsection{Eisermann formula}\label{sec:eisermann}

We give a new proof of a formula of Eisermann as a consequence of our results. 

\begin{thm}{\cite[Theorem 1.12]{MR3205568}}
	\label{thm:eisermann}
	Let $X$ be a finite indecomposable quandle and $x_0\in X$. Then
	\[
	H_2^Q(X,\Z)\simeq\left([G_X,G_X]\cap
	C_{G_X}(x_0)\right)_\ab\simeq\left(N_0\right)_\ab, 
	\]
	where $N_0$ is the stabilizer
	of a given $x_0\in X$ of the action of $[G_X,G_X]$ on $X$. 
\end{thm}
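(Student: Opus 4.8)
The plan is to combine the universal coefficient identification with the main theorem of this paper. By Lemma~\ref{lem:UCT}, for any abelian group $A$ with trivial action we have $H^2(X,A)\simeq\Hom(H_2(X,\Z),A)$, and by Lemma~\ref{lem:H2} we have $H_2(X,\Z)\simeq H_2^Q(X,\Z)\times\Z$, so that $H^2(X,A)\simeq\Hom(H_2^Q(X,\Z),A)\times A$. On the other hand, Theorem~\ref{thm:main} gives $H^2(X,A)\simeq A\times\Hom(N_0,A)$. Comparing these two descriptions, the factor $A$ corresponding to the $\Z$-summand (equivalently, to the value $q_{x_0,x_0}$) matches, and one is left with a natural isomorphism $\Hom(H_2^Q(X,\Z),A)\simeq\Hom(N_0,A)$, functorial in $A$. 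The final identification $N_0\simeq[G_X,G_X]\cap C_{G_X}(x_0)$ is a matter of unwinding definitions: $N_0$ was defined as the stabilizer of $x_0$ under the action of $N_X=[G_X,G_X]$, and since $G_X$ acts on $X$ by $g\cdot x_0=g x_0 g^{-1}$ (the enveloping-group action being conjugation-like, {\it cf.}~the presentation \eqref{eq:G_X}), the stabilizer inside $[G_X,G_X]$ is exactly $[G_X,G_X]\cap C_{G_X}(x_0)$.

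The main step is to upgrade the $A$-by-$A$ isomorphism of $\Hom$-functors into an actual isomorphism of the representing objects $H_2^Q(X,\Z)\simeq(N_0)_\ab$. First I would observe that $\Hom(N_0,A)=\Hom((N_0)_\ab,A)$ for every abelian $A$, since $A$ is abelian any homomorphism $N_0\to A$ factors through the abelianization. Thus I would establish a natural isomorphism of functors
\begin{equation*}
	\Hom(H_2^Q(X,\Z),-)\simeq\Hom((N_0)_\ab,-)
\end{equation*}
on the category of abelian groups. To pass from a natural isomorphism of corepresented functors to an isomorphism of the corepresenting abelian groups, I would invoke the Yoneda lemma. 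The one delicate point is that $H_2^Q(X,\Z)$ may have free and torsion parts, so I must check that the natural isomorphism above is genuinely natural in $A$ and not merely an abstract bijection for each fixed $A$; naturality forces the Yoneda argument to produce a canonical isomorphism of groups, and both groups are finitely generated, so there is no subtlety at infinity.

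The step I expect to demand the most care is verifying the \emph{naturality} of the composite isomorphism in the coefficient group $A$. Each ingredient is natural in $A$ separately: the universal coefficient map of Lemma~\ref{lem:UCT} is natural because it is induced by the pairing $q\mapsto([x,y]\mapsto q_{x,y})$, and the correspondence $f\mapsto(f(x_0)(x_0),f_0)$ of Theorem~\ref{thm:main} (via \eqref{eqn:correspondence-proof}) is built from evaluation maps that commute with any homomorphism $A\to A'$. I would therefore trace a single class $q\in H^2(X,A)$ through both chains of isomorphisms, confirm that the resulting element of $\Hom((N_0)_\ab,A)$ depends on $q$ only through the homological pairing on $H_2^Q(X,\Z)$, and confirm that this assignment commutes with change of coefficients. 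Once naturality is in hand, Yoneda delivers $H_2^Q(X,\Z)\simeq(N_0)_\ab$, and combined with the identification $N_0\simeq[G_X,G_X]\cap C_{G_X}(x_0)$ this yields the stated chain of isomorphisms.
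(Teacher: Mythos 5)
Your core argument is correct and is essentially the route the paper itself takes. The paper's proof also chases the chain $A\times\Hom(N_0,A)\simeq H^2(X,A)\simeq\Hom(H_2(X,\Z),A)$ (via Theorem~\ref{thm:reconstruction} and Lemma~\ref{lem:UCT}), splits off the $\Z$-summand using Lemma~\ref{lem:H2}, and concludes $H_2^Q(X,\Z)\simeq(N_0)_\ab$ from the resulting identification of $\Hom(H_2^Q(X,\Z),A)$ with $\Hom(N_0,A)$ for every abelian $A$. The differences are matters of emphasis: the paper checks the compatibility of the two product decompositions concretely, by writing the cocycle attached to $(a,g)$ via the reconstruction formula~\eqref{eqn:cociclo2} and observing that diagonal classes (which span the $\Z$-summand of Lemma~\ref{lem:H2}) pair to $a$, while the quandle classes pair through $g$; you propose to verify the same triangularity by tracing $q\mapsto(q_{x_0,x_0},(f_q)_0)$ through~\eqref{eqn:correspondence-proof}. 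You also make explicit the naturality-plus-Yoneda step that the paper leaves implicit in its final sentence; on that point your write-up is, if anything, more careful than the paper's.

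The one genuine soft spot is your parenthetical claim that $N_0=[G_X,G_X]\cap C_{G_X}(x_0)$ ``is a matter of unwinding definitions.'' Only the inclusion $N_0\subseteq N_X\cap C_{G_X}(x_0)$ is formal: if $g\cdot x_0=x_0$ then $g\iota(x_0)g^{-1}=\iota(g\cdot x_0)=\iota(x_0)$, where $\iota\colon X\to G_X$ is the canonical map. Conversely, $g\iota(x_0)g^{-1}=\iota(x_0)$ only gives $\iota(g\cdot x_0)=\iota(x_0)$, which forces $g\cdot x_0=x_0$ only when $\iota$ is injective. This injectivity is exactly the hypothesis of Lemma~\ref{lem:N0}, and the example following that lemma (the eight-element quandle $Y$ with $Y\to G_Y$ non-injective, where $N_0$ is trivial but $[F_Y,F_Y]\cap C_{F_Y}(\psi(y_0))\simeq\Z_2$) shows that stabilizers and centralizer-intersections genuinely diverge without it. The paper's own proof sidesteps this issue: it establishes only $H_2^Q(X,\Z)\simeq(N_0)_\ab$ and leaves the centralizer formulation to the cited theorem of Eisermann. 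So your proof of the $(N_0)_\ab$ isomorphism stands as written, but you should either add the hypothesis that $X\to G_X$ is injective before identifying $N_0$ with $[G_X,G_X]\cap C_{G_X}(x_0)$, or treat that identification as part of what is being cited rather than as a definitional triviality.
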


\begin{proof}
The claim follows by ``chasing'' the chain of equivalences
\[
A\times \Hom(N_0,A)\simeq H^2(X,A)\simeq \Hom(H_2(X,\Z),A)
\]
given by the application of Theorem \ref{thm:reconstruction} and Lemma~\ref{lem:UCT}. More explicitly, 
if $(a,g)\in A\times \Hom(N_0,A)$, then it defines $q\in H^2(X,A)$ via  \eqref{eqn:cociclo2}, which in turn defines a morphism $H_2(X,\Z)\to A$ by Lemma \ref{lem:UCT}:
\[
[x,y]\mapsto q_{x,y}=a+g(c(x\sigma_yx_0^{-1}))\in A,
\]
\emph{cf.} Theorem \ref{thm:reconstruction}. Now, $H_2(X,\Z)\simeq H_2^Q(X,\Z)\times \Z$ by Lemma \ref{lem:H2}
and so this assignment becomes a map in $\Hom(H_2^Q(X,\Z)\times \Z,A)$:
\[
\left([x,y], \ell\right) \longmapsto \ell\,a + g(c(x\sigma_yx_0^{-1})).
\]
Thus we see that the restriction of this map to $H_2^Q(X,\Z)\times \{0\}$ gives an equivalence $\Hom(H_2^Q(X,\Z),A)\simeq \Hom(N_0,A)\simeq (N_0)_{\ab}$ for any abelian group $A$. Hence we derive Eisermann's formula $H_2^Q(X,\Z)\simeq (N_0)_{\ab}$.
\end{proof}

If we combine this fact with Lemma \ref{lem:H2}, we obtain the following. 

\begin{cor}
	\label{cor:H_2}
	Let $X$ be a finite indecomposable quandle, $x_0\in X$. Then
	$H_2(X,\Z)\simeq \left(N_0\right)_\ab\times \Z$. 
\end{cor}

\subsection{Affine quandles}

Let $L$ be an abelian group and $\gamma\in\Aut(L)$. The \emph{affine} (or
\emph{Alexander}) quandle $\Aff(L,\gamma)$ is the set $L$ together with the action 
\[x\trid
y=\gamma(y)+x-\gamma(x), \qquad x,y\in L.
\] 

In \cite{C} Clauwens described the enveloping group of an affine quandle; we
review his construction next.  Set 
\begin{align}\label{eqn:S(L,g)}
\begin{split}
\tau_\gamma&\colon L\otimes_\Z L\to L\otimes_\Z L, \quad (x,y)\mapsto (x,y)-(y,\gamma(x)),\\
	S(L,\gamma) &\coloneqq \coker \tau_\gamma=L\otimes_\Z  L/\langle(x,y)-(y,\gamma(x))\rangle.
\end{split}
\end{align}
We write $[x,y]\in S(L,\gamma)$ for the class of an
element $x\otimes y\in L\otimes_\Z  L$. Set $X=\Aff(L,\gamma)$; then $G_X$ is the set $L\rtimes \Z \times S(L,\gamma)$ with
multiplication
\begin{equation*}
	\left(x,m,[p,q]\right)\left(y,n,[r,s]\right)=\left(x+\gamma^m(y),m+n,[p+r+x,q+s+\gamma^m(y)]\right), 
\end{equation*}
for $m,n\in\Z$, $x,y\in L$, $[p,q], [r,s] \in S(L,\gamma)$. 

The rack $X$ identifies with the subset $L\rtimes \{1\}\times 0$ with the rack action given by conjugation:
\begin{align*}
(x,1,0)(y,1,0)&=(x+\gamma(y),2,[x,\gamma(y)])=(x+\gamma(y),2,[x\trid y,\gamma(x)])\\
&=(x+\gamma(y)-\gamma(x)+\gamma(x),2,[x\trid y,\gamma(x)])\\
&=(x\trid y,1,0)(x,1,0)
\end{align*}
since $[x\trid y,\gamma(x)]=[\gamma(y),\gamma(x)]+[x,\gamma(x)]-[\gamma(x),\gamma(x)]=[x,\gamma(y)]$, as $[x,\gamma(x)]=[\gamma(x),\gamma(x)]$.
We fix $x_0=(0,1,0)$; then
\begin{align}\label{eqn:NX}
	N_X= L\times \{0\}\times S(L,\gamma), && N_0=\{0\}\times \{0\}\times S(L,\gamma).
\end{align}
Let $\{x_0,x_1,\dots,x_n\}$ be an enumeration of the elements of $L$. In particular, 
\begin{align}\label{eqn:decomposition}
	N_X=\bigsqcup\limits_{i\in\{0,\dots,n\}}\sigma_i N_0\simeq L\times \coker\tau_\gamma, \qquad \sigma_i=(x_i,0,0),
\end{align}
is a good decomposition of $N$ into $N_0$-cosets, \emph{cf.} Proposition \ref{pro:cosets}.
Indeed,
\begin{enumerate}
\item $\sigma_0=(0,0,0)$ coincides with the unit element in $G_X$;
\item fix $j\in \{0,\dots,n\}$ and let $k\in \{0,\dots,n\}$ be such that $x_k=\gamma(x_j)$. Then $x_0\trid \sigma_j=(0,1,0)(x_j,0,0)(0,-1,0)=(\gamma(x_j),0,0)=\sigma_k$; and 
\item if $i\in \{0,\dots,n\}$ and $x_j=(1-\gamma)^{-1}(x_i)$, then $\sigma_j\trid x_0=x_i$.
\end{enumerate}
Recall from \eqref{eqn:sigma_y} the definition of the elements $\sigma_y$,
$y\in X$, and from~\eqref{eqn:c(n)} the map $c\colon N_X\to N_0$. We see from Item (3) above that in
this case
\begin{align*}
\sigma_y=\left((1-\gamma)^{-1}(y),0,0\right), \quad y\in X.
\end{align*}
As a direct consequence of Theorem \ref{thm:reconstruction}, we obtain the following.
\begin{pro}
	\label{pro:affine}Let $L$ be an abelian group, $\gamma\in\Aut(L)$ and  
	$X=\Aff(L,\gamma)$ be the corresponding affine quandle and set $\Gamma=S(L,\gamma)$ as in \eqref{eqn:S(L,g)}. 
	Fix $x_0=0\in X$ and let $A$ be an abelian
	group with trivial $G_X$-action. 
	Consider a 
	 decomposition of $N_X$ into $N_0$-cosets as in \eqref{eqn:decomposition}.
  For each $a\in A$ and
	$g\in \Hom(\Gamma,A)$, the map $q\colon X\times X\to A$ given by 
	\begin{align}\label{eqn:explicit-q-affine}
		q_{x,y}=a+\sum\limits_{0<j<\ord(\gamma)}g\left([x, \gamma^j(y)]\right)
	\end{align}
	is a $2$-cocycle of $X$ and any $q\in H^2(X,A)$ arises in this way. 
\end{pro}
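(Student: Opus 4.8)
The plan is to deduce the statement directly from Theorem~\ref{thm:reconstruction} and Corollary~\ref{cor:reconstruction}, and then to render the universal formula \eqref{eqn:cociclo2} fully explicit inside Clauwens' group $G_X$. Since \eqref{eqn:decomposition} has already been verified to be a \emph{good} decomposition of $N_X$ into $N_0$-cosets (conditions (1)--(3) above, matching Proposition~\ref{pro:cosets}), Theorem~\ref{thm:reconstruction} immediately yields that, for every $a\in A$ and $g\in\Hom(\Gamma,A)$, the map $q_{x,y}=a+g(c(x\sigma_yx_0^{-1}))$ is a $2$-cocycle, while Corollary~\ref{cor:reconstruction} guarantees that every class of $H^2(X,A)$ is of this shape. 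Thus the ``is a $2$-cocycle'' and ``any $q$ arises'' clauses are inherited verbatim, and the only remaining task is to identify the coset term $c(x\sigma_yx_0^{-1})$ with the bracket expression in \eqref{eqn:explicit-q-affine}.

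First I would compute the element $x\sigma_yx_0^{-1}$ in $G_X$. Here $x$ is the generator $(x,1,0)$, $x_0=(0,1,0)$ with inverse $(0,-1,0)$, and $\sigma_y=\big((1-\gamma)^{-1}(y),0,0\big)$ as recorded before the statement. Multiplying out with the group law of $G_X$, the result lands in $N_X=L\times\{0\}\times\Gamma$, as it must; its $\Z$-component is $0$ and its $\Gamma$-component is a single bracket, of the form $[x,\gamma(1-\gamma)^{-1}(y)]$, the $L$-component being irrelevant for what follows. Next I would apply $c$: for the decomposition \eqref{eqn:decomposition}, whose representatives $\sigma_i=(x_i,0,0)$ have trivial $\Gamma$-component, the map $c\colon N_X\to N_0=\{0\}\times\{0\}\times\Gamma$ simply forgets the $L$-coordinate and returns the $\Gamma$-coordinate, the central correction terms coming from $\sigma(n)^{-1}$ cancelling against those coming from $n$. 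Hence $c(x\sigma_yx_0^{-1})$ equals the $\Gamma$-component computed in the previous step.

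The main obstacle, and the only genuinely nontrivial point, is the final identity in $\Gamma=S(L,\gamma)$ that turns this single bracket into the finite sum $\sum_{0<j<\ord(\gamma)}[x,\gamma^j(y)]$. For this I would use that $[\,\cdot\,,\,\cdot\,]$ is biadditive, the defining relation $x\otimes y=y\otimes\gamma(x)$ from \eqref{eqn:S(L,g)} (equivalently its $\gamma$-invariance), and, crucially, the fact that indecomposability of a finite affine quandle forces $1-\gamma$ to be invertible, so that $\gamma$ has finite order and $\sum_{j=0}^{\ord(\gamma)-1}\gamma^{j}=0$ as an endomorphism of $L$. The delicate step is precisely that the ``rational'' operator $(1-\gamma)^{-1}$ entering through $\sigma_y$ must be re-expressed, inside the bracket and modulo the relation defining $S(L,\gamma)$, as an honest finite combination of the powers $\gamma^{j}$ with $0<j<\ord(\gamma)$; one cannot simply substitute the geometric series, since over $\mathbb{Z}[\gamma]/(\gamma^{\ord(\gamma)}-1)$ the operator $1-\gamma$ is a zero divisor. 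Once this identity is established, substituting into \eqref{eqn:cociclo2} produces exactly \eqref{eqn:explicit-q-affine}, completing the argument.
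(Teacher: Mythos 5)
Your proposal reproduces the paper's own proof in its first three steps: the reduction to Theorem~\ref{thm:reconstruction} and Corollary~\ref{cor:reconstruction}, the computation $x\sigma_yx_0^{-1}=\bigl(x+\gamma(1-\gamma)^{-1}(y),0,[x,\gamma(1-\gamma)^{-1}(y)]\bigr)$, and the identification $c(x\sigma_yx_0^{-1})=[x,\gamma(1-\gamma)^{-1}(y)]$ are exactly what the paper does. The genuine gap is the step you defer to the end: the identity $[x,\gamma(1-\gamma)^{-1}(y)]=\sum_{0<j<\ord(\gamma)}[x,\gamma^j(y)]$ in $\Gamma$ is never proved, and it cannot be proved, because it is false in general. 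Indeed, the two tools you propose to use already pin down the right-hand side: since $1-\gamma$ is invertible and $\gamma^{\ord(\gamma)}=\id$, one has $\sum_{j=0}^{\ord(\gamma)-1}\gamma^j=(1-\gamma)^{-1}\bigl(1-\gamma^{\ord(\gamma)}\bigr)=0$ as an endomorphism of $L$, so biadditivity of the bracket gives $\sum_{0<j<\ord(\gamma)}[x,\gamma^j(y)]=[x,-y]=-[x,y]$. The identity you would need is therefore $[x,\gamma(1-\gamma)^{-1}(y)]=-[x,y]$ in $\Gamma$, and this fails: take $p=5$ and $\gamma=\gamma_{2,3}$ in case~\eqref{A1} (so $\alpha\beta=1$ and $\Gamma\simeq\Z_5$ via $[(a,b),(c,d)]\mapsto bc+2ad$, \emph{cf.}~\eqref{eqn:classes}). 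Here $\gamma(1-\gamma)^{-1}(y_1,y_2)=(3y_1,y_2)$, so the left-hand side is the form $3x_2y_1+2x_1y_2$ while the right-hand side is $-x_2y_1-2x_1y_2$; these are different functions on $X\times X$. Worse, for $x=(1,1)$, $y=(1,0)$, $z=(0,0)$ one has $x\trid y=(1,3)$, $x\trid z=(4,3)$, $y\trid z=(4,0)$, and the form $-u_2v_1-2u_1v_2$ violates the (additively written) cocycle condition~\eqref{eqn:cocycle-condition}: the left side is $-(3\cdot4+2\cdot3)\equiv 2$, the right side is $-(1\cdot 4)\equiv 1 \pmod 5$. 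So formula~\eqref{eqn:explicit-q-affine} does not even produce cocycles here, and no argument can close your gap.

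What makes this worth spelling out is that the paper's proof breaks at the very same spot, in exactly the way you warn against: it asserts $\gamma(1-\gamma)^{-1}(y)=(1-\gamma)^{-1}(y)-y=\sum_{0<j<\ord(\gamma)}\gamma^j(y)$, i.e.\ it substitutes the truncated geometric series for $(1-\gamma)^{-1}$, which is incompatible with $\sum_{j=0}^{\ord(\gamma)-1}\gamma^j=0$. What the computation you share with the paper actually proves is the corrected statement that the $2$-cocycles of $X$ are precisely the maps
\begin{equation*}
q_{x,y}=a+g\bigl(\bigl[x,\gamma(1-\gamma)^{-1}(y)\bigr]\bigr),\qquad a\in A,\quad g\in\Hom(\Gamma,A);
\end{equation*}
in the example above this gives $3x_2y_1+2x_1y_2$, which does satisfy~\eqref{eqn:cocycle-condition}. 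With this formula in place of~\eqref{eqn:explicit-q-affine}, both your argument and the paper's are complete. Note that Proposition~\ref{pro:explicit-affine} inherits the same defect, whereas Proposition~\ref{pro:homology-affine} is unaffected, since it uses only $N_0\simeq\Gamma$ and Corollary~\ref{cor:H_2}.
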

\pf
By Theorem \ref{thm:reconstruction} and Corollary \ref{cor:reconstruction}, any 2-cocycle is of the form \begin{align*}
		q_{x,y}=a+g(c(x\sigma_yx_0^{-1})).
	\end{align*}
for some $a\in A$ and $g\in \Hom(\Gamma,A)$. Using the identifications above, we have
\begin{align*}
x\sigma_yx_0^{-1}&=(x,1,0)((1-\gamma)^{-1}(y),0,0)(0,-1,0)\\
&=(x+\gamma(1-\gamma)^{-1}(y),0,[x,\gamma(1-\gamma)^{-1}(y)])\\
&=\sigma_k(0,0,[x,\gamma(1-\gamma)^{-1}(y)])\in\sigma_kN_0
	\end{align*}
for $k\in \{0,\dots,n\}$ such that $x+\gamma(1-\gamma)^{-1}(y)=x_k$. Hence 
\[\gamma(1-\gamma)^{-1}(y)=(1-\gamma)^{-1}(y)-y=\sum\limits_{0<j<\ord(\gamma)}\gamma^j(y)
\]
and the result follows.
\epf

If $L=\F_q$ is the finite field of $q$ elements and $\gamma$ is the
multiplication by some $1\neq\omega\in\F_q^\times$, we write
$\Aff(q,\omega)=\Aff(L,\gamma)$.

\begin{lem}
	\label{lem:afin-p}
	Let $p$ be a prime number and $1\neq \omega \in\F_p^\times$, set
	$X=\Aff(p,\omega)$. Then $G_X\simeq L\rtimes
	\Z$, $N_X\simeq L$ and $N_0$ is trivial.
\end{lem}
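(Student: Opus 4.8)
The plan is to reduce everything to the computation of the group $S(L,\gamma)=\coker\tau_\gamma$ that appears in Clauwens' description of $G_X$. Indeed, by \eqref{eqn:NX} we already know that $G_X=L\rtimes\Z\times S(L,\gamma)$, that $N_X=L\times\{0\}\times S(L,\gamma)$, and that $N_0=\{0\}\times\{0\}\times S(L,\gamma)$; hence all three assertions of the lemma follow at once provided we show that $S(L,\gamma)$ is the trivial group. So the whole proof amounts to verifying that $\coker\tau_\gamma=0$ when $L=\F_p$ and $\gamma$ is multiplication by $\omega$.

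First I would identify the tensor square. Since $L=\F_p=\Z/p\Z$, the abelian group $L\otimes_\Z L$ is cyclic of order $p$, generated by the class of $1\otimes 1$, and every simple tensor satisfies $x\otimes y=(xy)\,(1\otimes 1)$, with $xy$ computed in $\F_p$. Under this identification the defining relations of $S(L,\gamma)$ from \eqref{eqn:S(L,g)} read
\[
(x,y)-(y,\gamma(x))=x\otimes y-\omega\,(y\otimes x)=(1-\omega)\,xy\,(1\otimes 1).
\]

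The key step is then the observation that, because $\omega\neq 1$ in $\F_p^\times$, the scalar $1-\omega$ is nonzero and hence invertible in the field $\F_p$. Taking $x=y=1$ shows that $(1-\omega)(1\otimes 1)$ lies in the subgroup $\langle(x,y)-(y,\gamma(x))\rangle$, and since $1-\omega$ is invertible this element generates all of $L\otimes_\Z L$. Therefore the subgroup of relations is the whole group, so $S(L,\gamma)=\coker\tau_\gamma=0$.

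With $S(L,\gamma)=0$ in hand the conclusions are immediate: $N_0=\{0\}\times\{0\}\times 0$ is trivial, $N_X=L\times\{0\}\times 0\simeq L$, and $G_X=L\rtimes\Z\times 0\simeq L\rtimes\Z$, with $\Z$ acting on $L$ through $\gamma$. I do not expect a genuine obstacle here; the only point requiring a little care is to perform the identification $\F_p\otimes_\Z\F_p\simeq\F_p$ over $\Z$ rather than over $\F_p$, and to make sure the relation is read with the twist $\gamma$ applied only to the second tensor factor, exactly as in \eqref{eqn:S(L,g)}.
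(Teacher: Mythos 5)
Your proof is correct and follows essentially the same route as the paper: both reduce the lemma to showing $S(L,\gamma)=\coker\tau_\gamma=0$ via Clauwens' description, identify $\F_p\otimes_\Z\F_p\simeq\Z_p$, and observe that the image of $\tau_\gamma$ contains the nonzero element $(1-\omega)(1\otimes 1)$, which generates since $1-\omega$ is invertible in $\F_p$. The paper's proof is just a terser version of this same argument.
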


\begin{proof}
	Indeed,
	$S(L,\gamma)$ 
	is a quotient of $\Z_p\simeq \Z_p\otimes_\Z  \Z_p$ and we have that $0\neq
	(1-\omega)\otimes 1\in \Ima(\tau_\gamma)$, hence $S(L,\gamma)=0$ and the lemma follows.
\end{proof}

We recover the following result from~\cite[Lemma 5.1]{MR2093034}.  

\begin{pro}
	\label{pro:p}
$
H^2(\Aff(p,\omega),\C^\times)\simeq\C^\times. 
$
\end{pro}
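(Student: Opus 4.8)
The plan is to invoke the general machinery developed in the preceding sections and specialize it to the quandle $X=\Aff(p,\omega)$, for which all the relevant groups have already been identified. The starting point is Theorem~\ref{thm:main}, which gives $H^2(X,A)\simeq A\times\Hom(N_0,A)$ for any abelian group $A$ with trivial $G_X$-action. The key simplifying input is Lemma~\ref{lem:afin-p}, which asserts that for $X=\Aff(p,\omega)$ with $p$ prime and $1\neq\omega\in\F_p^\times$ the stabilizer $N_0$ is the trivial group.

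First I would take $A=\C^\times$, which is an abelian group on which $G_X$ acts trivially by assumption. Since $N_0=\{1\}$ by Lemma~\ref{lem:afin-p}, the homomorphism group $\Hom(N_0,\C^\times)$ is itself trivial: the only group homomorphism out of the trivial group is the one sending the identity to $1$. Substituting this into the isomorphism of Theorem~\ref{thm:main} collapses the product, yielding
\[
H^2(\Aff(p,\omega),\C^\times)\simeq\C^\times\times\Hom(N_0,\C^\times)\simeq\C^\times\times\{1\}\simeq\C^\times,
\]
which is exactly the claim. Thus the proof is essentially a one-line corollary of Theorem~\ref{thm:main} together with the computation of $N_0$ in Lemma~\ref{lem:afin-p}.

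There is no real obstacle here, as all the substantive work has been front-loaded into the general theorems and into Lemma~\ref{lem:afin-p}. The only point that warrants a sentence of justification is the vanishing of $N_0$, and this is precisely the content of Lemma~\ref{lem:afin-p}, whose proof in turn rests on showing $S(L,\gamma)=\coker\tau_\gamma=0$ via the observation that $(1-\omega)\otimes 1$ is a nonzero element of $\Ima(\tau_\gamma)$ inside $\Z_p\otimes_\Z\Z_p\simeq\Z_p$. Since $\omega\neq 1$ and $p$ is prime, $1-\omega$ is invertible in $\F_p$, forcing the quotient to vanish and hence $N_0$ to be trivial via the description $N_0=\{0\}\times\{0\}\times S(L,\gamma)$ from~\eqref{eqn:NX}. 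With $N_0$ trivial the stated isomorphism is immediate, so the proof reduces to citing Theorem~\ref{thm:main} and Lemma~\ref{lem:afin-p}.
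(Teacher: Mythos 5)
Your proof is correct and is exactly the paper's argument: the paper's own proof simply cites Theorem~\ref{thm:main} together with Lemma~\ref{lem:afin-p}, which is precisely what you do, only spelled out in more detail. The additional justification you give (triviality of $\Hom(N_0,\C^\times)$ and the vanishing of $S(L,\gamma)$) is the same reasoning the paper leaves implicit.
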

\begin{proof}
	It follows from Theorem \ref{thm:main}, 	using Lemma \ref{lem:afin-p}.
\end{proof}

\subsection{Indecomposable quandles of size $p^2$}

Let $p$ be a prime number and let $X$ be an indecomposable quandle of size $p^2$. By \cite{MR2093034}, $X$ is one of the following affine quandles 
$(L,\gamma)$ in the following list:
\begin{align}
\label{A1} L&=\Z_p\oplus \Z_p, & \gamma_{\alpha,\beta}(x,y)&=(\alpha\,x, \beta\,y), & \alpha,\beta&\in\Z_p^*\setminus\{1\};\\
\label{A2} L&=\Z_p\oplus \Z_p, & \gamma_{\alpha}(x,y)&=(\alpha\,x, \alpha\,y+x), &\alpha&\in\Z_p^*\setminus\{1\};\\
\label{A3} L&=\F_{p^2}, & \gamma_{\alpha}(x)&=\alpha\,x, &\alpha&\in\F_{p^2}\setminus\F_{p};\\
\label{A4} L&=\Z_{p^2},  & \gamma_{\alpha}(x)&=\alpha\,x, &\alpha&\not\equiv0,1\, (p).
\end{align}
We  identify $\F_{p^2}\simeq \F_{p}\oplus \F_p$ as abelian groups for notational reasons. 
For $\alpha=(\alpha_0,\alpha_1)\in\F_p^2$ we set 
\begin{align}\label{eqn:d_alpha}
d_\alpha&\coloneqq (1-\alpha_0+\alpha_1)(1-\alpha_0-\alpha_1)(1-\alpha_0^2+\alpha_1^2).
\end{align}
Assume $\alpha\in \F_{p^2}\setminus\F_{p}$, so $\alpha_1\neq0$. If $d_\alpha=0$, then $\alpha_0\neq 1$ and we set
\begin{align}\label{eqn:t_a}
 t_\alpha&\coloneqq (\alpha_0-\alpha_0^2+\alpha_1^2)(1-\alpha_0)^{-1}, &  s_\alpha&\coloneqq (1-\alpha_0)\alpha_1^{-1}.
\end{align}


\begin{pro}\label{pro:homology-affine}
The 2nd homology groups of the indecomposable quandles of order $p^2$ are as follows:
\begin{align*}
H_2\left((\Z_p\oplus \Z_p, \gamma_{\alpha,\beta}),\Z\right)&\simeq \begin{cases}
                                                     \Z\times\Z_p, &\mbox{if } \alpha\beta=1,\\
\Z, &\mbox{if } \alpha\beta\neq1.
                                                    \end{cases}
\\
H_2\left((\Z_p\oplus \Z_p, \gamma_{\alpha}),\Z\right)&\simeq \begin{cases}
                                                     \Z\times\Z_p, &\mbox{if } \alpha^2=1,\\
\Z, &\mbox{if } \alpha^2\neq 1.
                                                    \end{cases}
\\
H_2\left((\F_{p^2}, \gamma_{\alpha}),\Z\right)&\simeq \begin{cases}
                                                     \Z\times\Z_p, &\mbox{if } d_\alpha = 0,\\
\Z, &\mbox{if } d_\alpha\neq 0.
                                                    \end{cases}
\\
H_2\left((\Z_{p^2}, \gamma_{\alpha}),\Z\right)&\simeq \Z.
\end{align*}
\end{pro}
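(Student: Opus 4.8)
The plan is to reduce everything to a single invariant, the group $S(L,\gamma)=\coker\tau_\gamma$ of \eqref{eqn:S(L,g)}, and then to evaluate this cokernel for each of the four families. First I would assemble the chain already available in this section: by Theorem~\ref{thm:eisermann} and Corollary~\ref{cor:H_2} we have $H_2(X,\Z)\simeq (N_0)_{\ab}\times\Z$, while Clauwens' description of the enveloping group recorded in \eqref{eqn:NX} identifies $N_0\simeq\{0\}\times\{0\}\times S(L,\gamma)$. Since $S(L,\gamma)$ is abelian, $(N_0)_{\ab}\simeq S(L,\gamma)$, so that
\[
H_2(\Aff(L,\gamma),\Z)\simeq S(L,\gamma)\times\Z .
\]
Thus the whole proposition amounts to computing the abelian group $S(L,\gamma)=L\ot_\Z L/\langle x\ot y-y\ot\gamma(x)\rangle$ in each case, the $\Z$ summand being accounted for once and for all. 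This is exactly the object appearing in Proposition~\ref{pro:affine}, so the work is to determine it as a group rather than merely to exhibit cocycles.

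For the concrete computation I would treat $\tau_\gamma$ as an explicit endomorphism and read off its cokernel. In case~\eqref{A4}, $L=\Z_{p^2}$ gives $L\ot_\Z L\simeq\Z_{p^2}$, on which $\tau_\gamma$ is multiplication by $1-\alpha$; as $\alpha\not\equiv 1\ (p)$ this is a unit, so $S(L,\gamma)=0$ and $H_2\simeq\Z$. In the three cases with $L\simeq\Z_p\oplus\Z_p$ one has $L\ot_\Z L=L\ot_{\F_p}L\simeq\F_p^4$, and $\tau_\gamma=\id-T$, where $T(x\ot y)=y\ot\gamma(x)$, is an $\F_p$-linear endomorphism of a $4$-dimensional space; here $\dim_{\F_p}\coker\tau_\gamma=\dim_{\F_p}\ker\tau_\gamma$, so it suffices to find the fixed space of $T$. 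In the standard basis of $L\ot_{\F_p}L$ I would write the $4\times 4$ matrix of $\tau_\gamma$ and compute its rank. For the diagonal map~\eqref{A1} the matrix splits into a $2\times 2$ block on $e_1\ot e_2,\,e_2\ot e_1$ together with two $1\times 1$ blocks with entries $1-\alpha$, $1-\beta$; the block determinant is $1-\alpha\beta$, so the cokernel is nonzero exactly when $\alpha\beta=1$, and then it is cyclic of order $p$. For the unipotent map~\eqref{A2} the analogous reduction gives a nonzero cokernel precisely when $\alpha^2=1$, again cyclic of order $p$. In both cases this produces the stated dichotomy $\Z\times\Z_p$ versus $\Z$.

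The main obstacle is case~\eqref{A3}, where $\gamma$ is multiplication by $\alpha\in\F_{p^2}\setminus\F_p$: now $\gamma$ genuinely mixes the two $\F_p$-coordinates, the matrix of $\tau_\gamma$ no longer block-diagonalizes, and one must compute the full $4\times 4$ determinant. The key step is to show that, in the coordinates $\alpha=(\alpha_0,\alpha_1)$, the rank of $\tau_\gamma$ drops exactly when the explicit quantity $d_\alpha$ of \eqref{eqn:d_alpha} vanishes, so that $S(L,\gamma)=0$ when $d_\alpha\neq 0$ and is nontrivial when $d_\alpha=0$. The two delicate points I anticipate are (i) verifying that when $d_\alpha=0$ the cokernel is \emph{precisely} $\Z_p$, i.e. that $\dim_{\F_p}\ker\tau_\gamma=1$ rather than larger, and (ii) exhibiting an explicit generator of that kernel; this is where the auxiliary elements $t_\alpha,s_\alpha$ of \eqref{eqn:t_a} should enter, furnishing the coordinates of a single vector spanning $\ker\tau_\gamma$ and thereby confirming the cyclic structure. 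Once these four cokernel computations are in hand, the reduction $H_2\simeq S(L,\gamma)\times\Z$ assembles them into the displayed table; apart from the field case, the remaining arguments are routine linear algebra.
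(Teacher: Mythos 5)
Your proposal is correct and takes essentially the same route as the paper: both reduce, via Corollary~\ref{cor:H_2} and the identification $N_0\simeq S(L,\gamma)$ from \eqref{eqn:NX}, to computing $\coker\tau_\gamma$, and then settle each of the four families by explicit linear algebra on $L\otimes_\Z L$ under the identifications with $\Z_p^4$ or $\Z_{p^2}$. The one step you defer --- that in case \eqref{A3} the cokernel is exactly $\Z_p$ (not larger) when $d_\alpha=0$ --- is precisely what the paper checks, by exhibiting a $3\times 3$ minor of $[\tau_\alpha]$ equal to $-\alpha_1^2\neq 0$ so that the rank never drops below $3$, which is the same verification your plan calls for.
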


\pf
By Corollary~\eqref{cor:H_2}, if $X=(L,\gamma)$ and  $\tau_\gamma:L\otimes_\Z  L\to L\otimes_\Z  L$ as in \eqref{eqn:S(L,g)}, then 
$$
H_2(X,\Z)=(N_0)_{\ab}\times \Z=\coker\tau_\gamma\times \Z
$$
We compute $\coker\tau_\gamma$ case by case. We will use the identifications
\begin{equation}
\label{eqn:identifications}
\begin{aligned}
\big(\Z_p\oplus \Z_p\big)\otimes_\Z  \big(\Z_p\oplus \Z_p\big)&\simeq \Z_p^4, && (a,b)\ot (c,d)\mapsto 
(ac,ad,bc,bd)\\
\F_{p^2}\otimes_\Z  \F_{p^2}\simeq \F_{p}^2\ot_{\F_p} \F_{p}^2&\simeq \F_p^4, && (a,b)\ot (c,d)\mapsto 
(ac,ad,bc,bd)\\
\Z_{p^2}\otimes_\Z  \Z_{p^2}&\simeq \Z_{p^2}, && a\ot b\mapsto ab.
\end{aligned}
\end{equation}

Case \eqref{A1}: We have that $\tau_{\alpha,\beta}\coloneqq\tau_{\gamma_{\alpha,\beta}}$ is
$$
\tau_{\alpha,\beta}((a,b)\ot (c,d))=(a,b)\ot (c,d)-(c,d)\ot (\alpha\,a,\beta\,b).
$$
With the identifications above this yields 
$$
\tau_{\alpha,\beta}:\Z_p^4\to \Z_p^4, \quad (x,y,z,w)\mapsto ((1-\alpha)x,y-\beta\,z,z-\alpha\,y,(1-\beta)\,w).
$$
Next, we compute the image $I_{\alpha,\beta}$ of this map: For 
 $(a,b,c,d)\in\Z_p^4$ to be in this subgroup, we need $x=a(1-\alpha)^{-1}$, $w=d(1-\beta)^{-1}$ (recall $\alpha,\beta\neq 1$) and $y,z$ to be a 
solution of $y-\beta\,z=b$, $-\alpha\,y+z=c$.
This system has always a solution if $\alpha\beta\neq 1$. If $\alpha\beta=1$, then 
\begin{align*}
I_{\alpha,\beta}&=\{(a,b,-\alpha\,b,d)|a,b,d\in\Z_p\}\simeq \Z_p^3, \text{ hence }\\
 \coker\tau_\alpha&=\begin{cases}
0, &\mbox{if } \alpha\beta\neq 1,\\
\Z_p, &\mbox{if } \alpha\beta= 1.
\end{cases}
\end{align*}
In case \eqref{A2}, we have $\tau_\alpha\coloneqq\tau_{\gamma_\alpha}:\Z_p^4\to \Z_p^4$ is given by
\begin{align*}
 (x,y,z,w)\mapsto \big((1-\alpha)x,y-\alpha\,z+x,z-\alpha\,y,(1-\alpha)w-y\big).
\end{align*}
For $(a,b,c,d)$ to be in the image $I_\alpha$ of $\tau_\alpha$, we need $x=a(1-\alpha)^{-1}$ (recall $\alpha\neq1$) and $(y,z,w)$ to be a solution of
\begin{align*}
 &y-\alpha\,z=b-a(1-\alpha)^{-1}, && -\alpha\,y+z=c, && -y+(1-\alpha)w=d.
\end{align*}
This system has always a solution if $\alpha^2\neq 1$. If $\alpha^2=1$, then
\begin{align*}
I_\alpha&=\{(a,b,\alpha\,b-\alpha(1-\alpha)a,d)|a,b,d\in\Z_p\}\simeq \Z_p^3, \text{ hence }\\
\coker\tau_\alpha&=\begin{cases}
                   0, &\mbox{if } \alpha^2\neq 1,\\
\Z_p, &\mbox{if } \alpha^2= 1.
                  \end{cases}
\end{align*}
In case \eqref{A3}, if $\alpha=(\alpha_0,\alpha_1)\in\F_{p}^2\setminus \F_p$ (hence $\alpha_1\neq 0$), then the map 
$\tau_\alpha\in \End(\F_p^4)$ is represented by the matrix 
$$
[\tau_\alpha]=\left(\begin{smallmatrix}
                     1-\alpha_0&0&-\alpha_1&0\\
                     -\alpha_1&1&-\alpha_0&0\\
                     0&-\alpha_0&1&-\alpha_1\\
                     0&-\alpha_1&0&1-\alpha_0\\
                    \end{smallmatrix}
\right),
$$
with $\det [\tau_\alpha]=d_\alpha$, see \eqref{eqn:d_alpha}. Let $I_\alpha$ denote the image of this map. Now, the rank of this matrix is $\geq 3$, as  $\det\left(\begin{smallmatrix}
                     0&-\alpha_1&0\\
                     1&-\alpha_0&0\\
                     -\alpha_0&1&-\alpha_1
                    \end{smallmatrix}
\right)=-\alpha_1^2\neq0$. Hence,
$$
\coker\tau_\alpha=\begin{cases}
                   0, &\mbox{if } \det [\tau_\alpha]\neq 0,\\
\Z_p, &\mbox{if } \det [\tau_\alpha]= 0.
                  \end{cases}
$$
If $ \det [\tau_\alpha]=0$, i.e. $\d_\alpha=0$, then we set $t_\alpha, s_\alpha\in\F_p$ as in \eqref{eqn:t_a} and thus
\[I_\alpha=\{(a,b,c,-t_\alpha(a+b)-s_\alpha\,c)|a,b,c\in\Z_p\}\simeq \Z_p^3.
\] 
In case \eqref{A4}, $\tau_\alpha:\Z_{p^2}\to \Z_{p^2}$ is  $x\mapsto (1-\alpha)x$; hence
$
\coker\tau_\alpha=0$.
\epf

\subsection{Explicit cocycles}

Next we apply Proposition \ref{pro:affine} to compute all non-constant
2-cocycles for the affine quandles $X$ described in \eqref{A1}--\eqref{A4}.
More precisely, we focus on those affine quandles in that list admitting a
non-constant 2-cocyle, as stated in Proposition \ref{pro:homology-affine}:
\begin{align}
\label{A1'} L&=\Z_p\oplus \Z_p, & \gamma_{\alpha}(x,y)&=(\alpha\,x, \alpha^{-1}\,y), &\alpha&\in\Z_p^*\setminus\{1\};\\
\label{A2'} L&=\Z_p\oplus \Z_p, & \gamma(x,y)&=(-x, x-y);&&\\
\label{A3'} L&=\F_{p^2}, & \gamma_{\alpha}(x)&=\alpha\,x, &\alpha&\in\F_{p^2}\setminus\F_{p}, d_\alpha=0.
\end{align}
Recall our identification $\F_{p^2}\simeq \F_{p}^2$, $x\mapsto (x_0,x_1)$, and $t_\alpha,s_\alpha\in\Z_p$ from $\eqref{eqn:t_a}$.
For $x,y\in L$ and $j\in\N$ we set, for $X$ is as in~\eqref{A1'}, 
\[
\zeta_j(x,y)=
\alpha^jx_2y_1+\alpha^{1-j}x_1y_2; 
\]
for $X$ is as in~\eqref{A2'}, 
\[
\zeta_j(x,y)=(j+2(-1)^j)x_1y_1+(-1)^j(x_1y_2-x_2y_1); 
\]
and, for $X$ is as in~\eqref{A3'}, 
\[
\zeta_j(x,y)=x_1(\alpha^j y)_1+t_\alpha\left(x_0(\alpha^j y)_0+x_0(\alpha^j y)_1\right)
+s_\alpha\,x_1(\alpha^j y)_0. 
\]
Next, we define the map $\lg \, , \,\rg:L\times L\to \Z$ as
\[
\lg  x, y \rg =\sum_{0< j <\ord(\gamma)}\zeta_j(x,y), \quad x,y\in L.
\]
Notice that $\ord(\gamma)=p-1$, $2p$ (or 2 if $p=2$) or $p^2-1$ according to whether $X$ is as in \eqref{A1'}, \eqref{A2'} or \eqref{A3'}, respectively.
\begin{pro}\label{pro:explicit-affine}
Let $X=(L,\gamma)$ be an indecomposable affine rack of order $p^2$. If $q\in H^2(X,\k^*)$ is non-constant, then $X$ belongs to the list \eqref{A1'}--\eqref{A3'} and 
there are $0<\ell<p$ and $\lambda\in\k^*$ such that
\begin{align}\label{eqn:expl-cocycle}
q_{x,y}=\lambda\exp\left(\frac{2\pi\mbox{i}\ell}{p}\lg x,y\rg \right), \quad x,y\in X.
\end{align}
\end{pro}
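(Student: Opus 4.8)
The plan is to combine the cocycle formula of Proposition~\ref{pro:affine} with the explicit cokernel computations carried out in the proof of Proposition~\ref{pro:homology-affine}, and then to recognise the character $g$ as a root-of-unity character of $\Z_p$.

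\textbf{Step 1 (which quandles occur).} I would first record that $N_0\simeq S(L,\gamma)=\coker\tau_\gamma$ by \eqref{eqn:NX}, so that Theorem~\ref{thm:main} gives $H^2(X,\k^*)\simeq\k^*\times\Hom(\coker\tau_\gamma,\k^*)$. A class is non-constant exactly when its $\Hom$-component is non-trivial, which forces $\coker\tau_\gamma\neq0$. By the case analysis in Proposition~\ref{pro:homology-affine} this happens only for the quandles in the list \eqref{A1'}--\eqref{A3'}, where moreover $\coker\tau_\gamma\simeq\Z_p$; for all other indecomposable quandles of size $p^2$ one has $N_0=1$ and hence only constant cocycles. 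This already yields the first assertion.

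\textbf{Step 2 (shape of the character).} Since $\Gamma\coloneqq S(L,\gamma)\simeq\Z_p$, every $g\in\Hom(\Gamma,\k^*)$ factors as $g=\chi_\ell\circ\phi$, where $\phi\colon\Gamma\xrightarrow{\sim}\Z_p$ is a fixed isomorphism and $\chi_\ell(t)=\exp(2\pi\mbox{i}\,\ell t/p)$ for a unique $\ell\in\{0,\dots,p-1\}$; the class is non-constant iff $\ell\neq0$, i.e.\ $0<\ell<p$. Writing the cocycle of Proposition~\ref{pro:affine} multiplicatively (so that the constant $a$ becomes $\lambda\in\k^*$), we get
\[
q_{x,y}=\lambda\prod_{0<j<\ord(\gamma)}g\bigl([x,\gamma^j(y)]\bigr)=\lambda\exp\!\left(\frac{2\pi\mbox{i}\,\ell}{p}\sum_{0<j<\ord(\gamma)}\phi\bigl([x,\gamma^j(y)]\bigr)\right).
\]
Thus it suffices to check that $\phi([x,\gamma^j(y)])=\zeta_j(x,y)$ for every $j$, since then the exponent equals $\tfrac{2\pi\mbox{i}\,\ell}{p}\langle x,y\rangle$ and \eqref{eqn:expl-cocycle} follows.

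\textbf{Step 3 (the per-case computation).} The remaining and principal task is the verification $\phi([x,\gamma^j(y)])=\zeta_j(x,y)$ in each of the three families. In every case I would: (i) write $\gamma^j$ in closed form --- the diagonal map $\operatorname{diag}(\alpha^j,\alpha^{-j})$ for \eqref{A1'}, the non-semisimple $\gamma^j=(-1)^j\left(\begin{smallmatrix}1&0\\-j&1\end{smallmatrix}\right)$ for \eqref{A2'}, and multiplication by $\alpha^j$ in $\F_{p^2}$ for \eqref{A3'}; (ii) compute the coordinates of the tensor $x\otimes\gamma^j(y)$ under the identifications \eqref{eqn:identifications}; and (iii) apply the linear functional $\phi$ cutting out $\coker\tau_\gamma$, read directly off the description of $\Ima\tau_\gamma$ from the proof of Proposition~\ref{pro:homology-affine} (namely $\phi(c_1,c_2,c_3,c_4)=c_3+\alpha c_2$ for \eqref{A1'}, and $\phi(c_1,c_2,c_3,c_4)=c_4+t_\alpha(c_1+c_2)+s_\alpha c_3$ for \eqref{A3'}, with the analogous functional for \eqref{A2'}). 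A short substitution then produces exactly the stated $\zeta_j$.

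\textbf{Main obstacle.} The one delicate point is step~(iii): extracting the correctly normalised generator of $\coker\tau_\gamma\simeq\Z_p$, i.e.\ the functional $\phi$, from the image lattice $\Ima\tau_\gamma$. The overall scalar ambiguity in $\phi$ is harmless, as it is absorbed into the choice of $\ell$. The closed form of $\gamma^j$ in the non-semisimple case \eqref{A2'} and the bookkeeping of the field powers $\alpha^j$ in \eqref{A3'} demand some care, but are routine once $\phi$ has been pinned down. Summing the resulting $\zeta_j$ over $0<j<\ord(\gamma)$ gives $\langle x,y\rangle$ and completes the proof.
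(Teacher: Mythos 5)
Your proposal is correct and follows essentially the same route as the paper: restrict to the list \eqref{A1'}--\eqref{A3'} via the cokernel computations of Proposition~\ref{pro:homology-affine}, identify $\Hom(N_0,\k^*)$ with the characters $\chi_\ell$ of $\Z_p$, apply Proposition~\ref{pro:affine}, and check that $\zeta_j(x,y)$ represents the image of $[x,\gamma^j(y)]$ under the isomorphism $N_0\simeq\Z_p$ read off from the image of $\tau_\gamma$. Indeed, the functionals you write down in Step~3 agree with the paper's formula~\eqref{eqn:classes}, so the remaining substitution is exactly the one the paper also leaves implicit.
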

\pf
Fix $x_0=0\in L$ and a good decomposition $N\simeq L\times \coker\tau_\gamma$ of $N_X$ into $N_0$-cosets, see \eqref{eqn:decomposition}. In this case, $N_0=x_0\times \coker\tau_\gamma\simeq \Z_p$, by 
Proposition \ref{pro:homology-affine}. More precisely, if we denote by $\varphi:N_0\to \Z_p$ this isomorphism, 
then  it follows from the proof of Proposition \ref{pro:homology-affine} that, for $t_\alpha$, $s_\alpha$ as in \eqref{eqn:t_a}: 
\begin{align}\label{eqn:classes}
\varphi\left([(a,b), (c,d)]\right)=\begin{cases}
                                        bc+\alpha ad \in\Z_p, & X\text{ as } \eqref{A1'};\\
                                        bc+ ad-2ac \in\Z_p,   & X\text{ as } \eqref{A2'};\\
                                        bd+t_\alpha(ac+ad)+s_\alpha\,bc  \in\Z_p, & X\text{ as } \eqref{A3'}.
                                        \end{cases}
\end{align}
On the other hand, if $g\in\Hom(N_0,\k^*)$, then there is $0\leq \ell<p$ such that $g$ is the morphism $g_\ell$ given by $1\mapsto \exp\left(\frac{2\pi\mbox{i}\ell}{p}\right)$. By Proposition \ref{pro:affine}, any $q\in H^2(X,\k^*)$ is thus of the form
\begin{align*}
q_{x,y}=\lambda\prod\limits_{0<j<\ord(\gamma)}\exp\left(\frac{2\pi\mbox{i}\ell}{p}\varphi([x,\gamma^j(y)]) \right), \quad x,y\in X,
\end{align*}
for some $\lambda\in\k^*$, $\ell\in\Z$.
Hence the result follows as $\zeta_j(x,y)\in\Z$ is a representative of $\varphi([x,\gamma^j(y)])$, for each $x,y\in X$, via \eqref{eqn:classes}.
\epf

\section*{Acknowledgements}

We thank G. Garc\'ia and M. Kotchetov for interesting discussions.  We also
thank N. Andruskiewitsch for his constant guidance and support.  This work was
initiated while the authors were visiting Mar\'ia Ofelia Ronco, at Universidad
de Talca, Chile. We are grateful for her warm hospitality. The
authors are grateful to the reviewer for useful remarks, interesting
suggestions and corrections.

\def\cprime{$'$}

\end{document}